\theoremstyle{plain}
\newtheorem{theorem}{Theorem}[section]
\newtheorem{lemma}[theorem]{Lemma}
\newtheorem{prop}{Proposition}[section]
\newtheorem{definition}{Definition}[section]
\newtheorem{assumption}{Assumption}[section]
\newtheorem{remark}{Remark}[section]
\numberwithin{equation}{section}
\title{Construction of Boundary Conditions for Hyperbolic Relaxation Approximations\\ 
                   II: Jin-Xin Relaxation Model}
\author{Xiaxia Cao$^{*}$\\
$Department$ $of$ $Mathematical$ $Sciences,$\\
$Tsinghua$ $University,$ $Beijing$ $100084,$ $China.$\\
\\
Wen-An Yong$^{\dagger}$\\
$Department$ $of$ $Mathematical$ $Sciences,$\\
$Tsinghua$ $University,$ $Beijing$ $100084,$ $China.$
}
\date{\today}
\begin{document}

\maketitle{}

\begin{abstract}
This is our second work in the series about constructing boundary conditions for hyperbolic relaxation approximations. The present work is concerned with the one-dimensional linearized Jin-Xin relaxation model, a convenient approximation of hyperbolic conservation laws, 
%in the half spatial space 
with non-characteristic boundaries. Assume that proper boundary conditions are given for the conservation laws. We construct boundary conditions for the relaxation model with the expectation that the resultant initial-boundary-value problems are approximations to the given conservation laws with the boundary conditions. The constructed boundary conditions are highly non-unique. Their satisfaction of the generalized Kreiss condition is analyzed. The compatibility with initial data is studied. Furthermore, by
resorting to a formal asymptotic expansion, we prove the effectiveness of the approximations. 
%by energy estimate and Laplace transformation.
\end{abstract}
Keywords: Hyperbolic relaxation systems; Boundary conditions; Kreiss condition; Compatibility of initial and boundary data; Energy estimate.

%%%%%%%%%%%%%%%%%%%%%%%%%%%%%%%%%%%%%%%%%%%%%%%%%%%%%%%%%%%%%%%%%%%
\footnote{
$*$E-mail: caoxx18@mails.tsinghua.edu.cn

\quad$\dagger$E-mail: wayong@tsinghua.edu.cn
}
%%%%%%%%%%%%%%%%%%%%%%%%%%%%%%%%%%%%%%%%%%%%%%%%%%%%%%%%%%%%%%%%%%%

%%%%%%%%%%%%%%%%%%%%%%%%%%%%%%%%%%%%%%%%%%%%%%%%%%%%%%%%%%%%%%%%%%%
%%%%%%%%%%%%%%%%%%%%%%%%%%%%%%%%%%%%%%%%%%%%%%%%%%%%%%%%%%%%%%%%%%%
%%%%%%%%%%%%%%%%%%%%%%%%%%%%%%%%%%%%%%%%%%%%%%%%%%%%%%%%%%%%%%%%%%%
%%%%%%%%%%%%%%%%%%%%%%%%%%%%%%%%%%%%%%%%%%%%%%%%%%%%%%%%%%%%%%%%%%%
%%%%%%%%%%%%%%%%%%%%%%%%%%%%%%%%%%%%%%%%%%%%%%%%%%%%%%%%%%%%%%%%%%%
%%%%%%%%%%%%%%%%%%%%%%%%%%%%%%%%%%%%%%%%%%%%%%%%%%%%%%%%%%%%%%%%%%%
%%%%%%%%%%%%%%%%%%%%%%%%%%%%%%%%%%%%%%%%%%%%%%%%%%%%%%%%%%%%%%%%%%%
%%%%%%%%%%%%%%%%%%%%%%%%%%%%%%%%%%%%%%%%%%%%%%%%%%%%%%%%%%%%%%%%%%%
%%%%%%%%%%%% Introdution %%%%%%%%%%%%%%%%%%%%%%%%%%%%%%%%%%%%%%%%%%

\section{Introduction}

This is our second work in the series about constructing boundary conditions~(BCs) for hyperbolic relaxation systems, which are an important class of partial differential equations. They describe a large number of various non-equilibrium phenomena. Important examples arise in chemically reactive flows~\cite{ern1994multicomponent}, the kinetic theory~\cite{cai2015a,gatignol1975theorie,levermore1996moment,mieussens2000discrete}, compressible viscoelastic flows~\cite{chakraborty2015constitutive,yong2014newtonian}, traffic flows~\cite{10.2307/118533,whitham2011linear}, thermal non-equilibrium flows \cite{1986vincentintroduction} and so on.\vspace{1ex}

On the other hand, relaxation systems also arise as convenient approximations of hyperbolic conservation laws~\cite{doi:10.1137/S0036142998343075,bouchut1999construction,jin1995the}, say
\begin{equation}\label{1.1}
\partial_t\bm u+\partial_xf(\bm u)=0,
\end{equation}
where $\bm u=\bm u(x,t)\in \mathbb{R}^n.$
 A typical example is the Jin-Xin relaxation model~\cite{jin1995the} 
\begin{align}\label{1.2}
        \partial_t\bm u+ \partial_x\bm v&=0,\nonumber\\[2mm]
       \partial_t\bm v+a\partial_x\bm u&=-\frac{1}{\epsilon}\left(\bm v-f(\bm u)\right),
  \end{align}
where $a$ is a positive constant and $\epsilon$ is a small positive parameter called the relaxation rate. This model can provide novel numerical schemes to simulate shock waves without using Riemann solvers.

When the conservation laws \eqref{1.1} are given in a spatial domain with boundaries, say $x>0$, proper BCs are needed at the boundaries for the relaxation model \eqref{1.2} to play its role. To be precise, we notice that the coefficient matrix of the relaxation model has $n$ positive eigenvalues. According to the classical theory for hyperbolic equations~\cite{benzoni2007multi}, $n$ BCs are needed at the boundary $x=0$. On the other hand, the number of the given BCs for the hyperbolic conservation laws is equal to the number of positive eigenvalues of  $\frac{\partial f(\bm u)}{\partial \bm u}$, which is less than $n$ in general. Thus, new BCs are required. This obvious question has not been resolved for a long time. The present paper attempts to answer this question for the one-dimensional relaxation model \eqref{1.2}.

Like the first paper in this series~\cite{zhou2020construction}, this work assumes that the BCs for the conservation laws are given and satisfies the Kreiss condition \cite{higdon1986initial}. Such an assumption is reasonable for the conservation laws are classical and many mathematically correct and physically-based BCs thereof are available. In addition, for simplicity we only consider the case where the spatial domain is the half-space $x>0$. According to \cite{majda1975initial}, such a domain is representative. Furthermore, we assume that the conservation laws are linear and the boundary $x=0$ is non-characteristic  for the conservation laws. The corresponding nonlinear and/or multi-dimensional problems with or without characteristic boundaries are more challenging. They are our on-going project.

Under the above circumstance, the goal of this paper is to construct proper BCs for the relaxation model so that, as the relaxation rate is small, the resultant initial-boundary-value problems are  good approximations to the conservation laws with the given BCs. For this purpose, we firstly resort to asymptotic expansions to obtain certain algebraic relations. The construction is based on these relations and the BC theory developed in \cite{yong1999boundary}. It is highly non-unique. In order to show the effectiveness of the constructed BCs, we partly show that they fulfill the generalized Kreiss condition~(GKC)~\cite{yong1999boundary}, which is essentially necessary for the convergence when $\epsilon$ goes to zero. Furthermore, we prove the convergence for initial data compatible  with the constructed BCs.

At this point, we mention that a difficulty in this work is to verify the GKC. The first article \cite{zhou2020construction} in this series does not directly verify the GKC but proves the strict dissipativeness of the constructed BCs. Other related works in literature all assume that the BCs are prescribed for the hyperbolic relaxation systems~\cite{borsche2018a,cai2018numerical,doi:10.1081/PDE-100106135,nishibata1996the,wang1998asymptotic,xin2000stiff,xu2004boundary}. By contrast, the BCs in the present work are not given.

This paper is organized as follows. Section $2$ contains some preliminaries and a formal asymptotic expansion. The detailed construction of BCs or the main result of this paper is presented in Section $3$. In Section $4$ the GKC is reviewed, while its verification is given in Section $5$. Section $6$ is devoted to the compatibility of the constructed BCs with initial data. In Section $7,$ the formal asymptotic solution is constructed. The effectiveness is showed with error estimates by the energy method and Laplace transformation in Section $8.$  A special case is discussed in the appendix.

%%%%%%%%%%%%%%%%%%%%%%%%%%%%%%%%%%%%%%%%%%%%%%%%%%%%%%%%%%%%%%%%%%%%%%%%%%%%%%%%%%%%%%%%%%%%%%%%%%%%
%%%%%%%%%%%%%%%%%%%%%%%%%%%%%%%%%%%%%%%%%%%%%%%%%%%%%%%%%%%%%%%%%%%%%%%%%%%%%%%%%%%%%%%%%%%%%%%%%%%%
%%%%%%%%%%%%%%%%%%%%%%%%%%%%%%%%%%%%%%%%%%%%%%%%%%%%%%%%%%%%%%%%%%%%%%%%%%%%%%%%%%%%%%%%%%%%%%%%%%%%
%%%%%%%%%%%%%%%%%%%%%%%%%%%%%%%%%%%%%%%%%%%%%%%%%%%%%%%%%%%%%%%%%%%%%%%%%%%%%%%%%%%%%%%%%%%%%%%%%%%%
%%%%%%%%%%%%%%%%%%%%%%%%%%%%%%%%%%%%%%%%%%%%%%%%%%%%%%%%%%%%%%%%%%%%%%%%%%%%%%%%%%%%%%%%%%%%%%%%%%%%
%%%%%%%%%%%%%%%%%%%%%%%%%%%%%%%%%%%%%%%%%%%%%%%%%%%%%%%%%%%%%%%%%%%%%%%%%%%%%%%%%%%%%%%%%%%%%%%%%%%%
%%%%%%%%%%%%%%%%%%%%%%%%%%%%%%%%%%%%%%%%%%%%%%%%%%%%%%%%%%%%%%%%%%%%%%%%%%%%%%%%%%%%%%%%%%%%%%%%%%%%
%%%%%%%%%%%%%%%%%%%%%%%%%%%%%%%%%%%%%%%%%%%%%%%%%%%%%%%%%%%%%%%%%%%%%%%%%%%%%%%%%%%%%%%%%%%%%%%%%%%%
%%%%%%%%%%%%%%%%%%%%%%%%%%%%%%%%%%%%%%%%%%%%%%%%%%%%%%%%%%%%%%%%%%%%%%%%%%%%%%%%%%%%%%%%%%%%%%%%%%%%
%%%%%%%%%%%%%%%%%%%%%%%%%%%%%%%%%%%%%%%%%%%%%%%%%%%%%%%%%%%%%%%%%%%%%%%%%%%%%%%%%%%%%%%%%%%%%%%%%%%%
%%%%%%%%%%%% Formal Asymptotic Expansion %%%%%%%%%%%%%%%%%%%%%%%%%%%%%%%%%%%%%%%%%%%%%%%%%%%%%%%%%%%

\section{Preliminaries}

 We start with the exact equations to be studied in this paper. The one-dimensional linear hyperbolic system of conservation laws reads as
\begin{equation}\label{2.1}
\partial_t\bm u+F\partial_x\bm u=0, \quad x>0,t>0. 
\end{equation}
The hyperbolicity means that the coefficient matrix $F$ can be real diagonalized, that is, there is an invertible matrix $T$ such that 
  $$
  T^{-1}FT=\Lambda=\text{diag}(\lambda_1,\cdots,\lambda_n).
$$ 
We assume that $F$ is invertible, which corresponds to the assumption that the boundary $x=0$ is non-characteristic. Let $\lambda_1,\cdots,\lambda_l > 0, \lambda_{l+1},\cdots,\lambda_n< 0.$ According to the classical theory~\cite{benzoni2007multi}, $l$ BCs of the form
\begin{equation}\label{2.2}
\hat B\bm u(0,t)=\hat b(t)
\end{equation}
are prescribed at $x=0$.
Here $\hat B$ is  an $l\times n$-matrix such that $\hat BR_1^{U}$ is invertible, where $R_1^{U}$ consists of the first $l$ columns of $T$. \vspace{1ex}

The corresponding Jin-Xin relaxation model is 
  \begin{align*}
        \partial_t\bm u+ \partial_x\bm v&=0,\nonumber\\[2mm]
       \partial_t\bm v+\bar A\partial_x\bm u&=-\frac{1}{\epsilon}\left(\bm v-F\bm u\right).
  \end{align*}
This is more general than that in \eqref{1.2} for the matrix $\bar A$ has the form
%$F\bar A=\bar AF,$   From this ,we deduce that 
$$
\bar A=T\text{diag}(a_1,\cdots,a_n)T^{-1}
 $$  
  with $a_j>0$, which includes the case  $ \bar A=aI_n$ and implies $F\bar A=\bar AF.$ 
 Let $\bm p=\bm v-F\bm u.$ The relaxation model above can be rewritten as
 \begin{equation}\label{2.3}
 \left(\begin{array}{cc}
     \bm u \\
     \bm p \\
\end{array}
\right)_t
+
\left(\begin{array}{cc}
       F   & I_n \\
    \bar A-F^2 & -F \\
\end{array}
\right)
\left(\begin{array}{cc}
\bm u \\
\bm p\\
\end{array}
\right)_x
=\frac{1}{\epsilon}\left(\begin{array}{cc}
0 & 0 \\
0 & -I_n\\
\end{array}\right)\left(\begin{array}{cc}
\bm u \\
\bm p\\
\end{array}\right).
\end{equation}
Here and below, $I_k$ is the identity matrix of order $k$.\vspace{1ex}

For this small parameter problem, we seek the following formal asymptotic solutions
\begin{equation}\label{2.4}
  \left(
  \begin{array}{cc}
     \bm u_\epsilon \\
     \bm p_\epsilon\
  \end{array}
  \right)(x,t)=
  \left(\begin{array}{cc}
      \bar{\bm u} \\
       \bar{\bm p}\\
        \end{array}
   \right)(x,t;\epsilon)+\left(\begin{array}{cc}
                                \bm \mu\\
                                 \bm\nu\\
                         \end{array}
                         \right)(x/\epsilon,t;\epsilon).
\end{equation}
Here the first term is the outer solution
\begin{equation}\label{2.5}
  \left(\begin{array}{cc}
       \bar{\bm u} \\
       \bar{\bm p}\\
  \end{array}
  \right)(x,t;\epsilon)
= \left(\begin{array}{cc}
        \bar {\bm u}_0 \\
         \bar {\bm p}_0\\
        \end{array}
        \right)(x,t)+\epsilon\left(\begin{array}{cc}
                                   \bar{\bm  u}_1 \\
                                   \bar{\bm  p}_1\\
                                  \end{array}
                              \right)(x,t),
\end{equation}
while the second term is the boundary-layer correction
\begin{equation}\label{2.6}
\left(\begin{array}{cc}
\bm \mu \\
\bm\nu\\
\end{array}\right)(\xi,t;\epsilon)=\left(\begin{array}{cc}
\bm\mu_0 \\
\bm\nu_0\\
\end{array}\right)(\xi,t)+\epsilon\left(\begin{array}{cc}
\bm\mu_1 \\
\bm\nu_1\\
\end{array}\right)(\xi,t)
\end{equation}
with $\xi=x/\epsilon$. As the boundary-layer corrections, they satisfy the matching conditions
\begin{equation}\label{2.7}		
\bm\mu_j(\infty,t)=\bm\nu_j(\infty,t)=0,\qquad j=0,1.
\end{equation}

The outer solution asymptotically satisfies the relaxation system \eqref{2.3}. We substitute the expansion \eqref{2.5} into the equations in \eqref{2.3} and equate the coefficients of $\epsilon^k$ to obtain
\begin{equation}\label{2.8}
\bar{\bm p}_0=0,
\end{equation}
\begin{equation}\label{2.9}
\bar {\bm u}_{0t}+F\bar{\bm u}_{0x}=0,
\end{equation}
\begin{equation}\label{2.10}
\bar{\bm p}_1=-(\bar A-F^2)\bar {\bm u}_{0x},
\end{equation}
\begin{equation}\label{2.11}
\bar{\bm  u}_{1t}+F\bar{\bm u}_{1x}=-\bar {\bm p}_{1x}.
\end{equation}
 Similarly, substituting the corrections  \eqref{2.6} into the equations in \eqref{2.3} and equating the coefficients of $\epsilon^k$ for $k=\{-1,0\},$ we get
\begin{equation}\label{2.12}
F\bm\mu_{0\xi}+\bm\nu_{0\xi}=0,
\end{equation}
\begin{equation}\label{2.13}
(\bar A-F^2)\bm\mu_{0\xi}-F\bm\nu_{0\xi}=-\bm\nu_0,
\end{equation}
\begin{equation}\label{2.14}
\bm\mu_{0t}+F\bm\mu_{1\xi}+\bm\nu_{1\xi}=0,
\end{equation}
\begin{equation}\label{2.15}
\bm\nu_{0t}+(\bar A-F^2)\bm\mu_{1\xi}-F\bm\nu_{1\xi}=-\bm\nu_1.
\end{equation}

Since $\bm\mu_0(\infty,t)=\bm\nu_0(\infty,t)=0$ in \eqref{2.7}, we integrate the equation in \eqref{2.12} from $\xi$ to $\infty$ to obtain
\begin{equation}\label{2.16}
\bm\mu_0=-F^{-1}\bm\nu_0.
\end{equation}
Substituting this into \eqref{2.13} and integrating from $\xi$ to $\infty,$ we get the equation for $\bm\nu_{0}=\bm\nu_{0}(\xi,t)$:
\begin{equation}\label{2.17}
\bm\nu_{0\xi}=F\bar A^{-1}\bm\nu_0.
\end{equation}
Similarly, we integrate the equation in \eqref{2.14} from $\xi$ to $\infty$  to get
\begin{equation}\label{2.18}
\bm\mu_1=-F^{-1}\bm\nu_1+F^{-1}\int_\xi^\infty \bm\mu_{0t}(s,t)ds.
\end{equation}
Finally, with \eqref{2.14} and \eqref{2.15} we deduce that $\bm\nu_{1}=\bm\nu_{1}(\xi,t)$ satisfies
\begin{equation}\label{2.19}
\bm\nu_{1\xi}=F\bar A^{-1}\bm\nu_1+F^{-1}\bm \nu_{0t}.
\end{equation}

Consequently, we derive the equations for the expansion coefficients $\bar {\bm u}_{0},$ $\bar {\bm u}_{1},$ $\bm\nu_{0}$ and $\bm\nu_{1}$. To determine them and thereby the expansion, proper boundary and initial conditions are needed. This will be discussed in Section $7.$

%%%%%%%%%%%%%%%%%%%%%%%%%%%%%%%%%%%%%%%%%%%%%%%%%%%%%%%%%%%%%%%%%%%%%%%%%%%%%%%%%%%%%%%%%%%%%%%%%%%%
%%%%%%%%%%%%%%%%%%%%%%%%%%%%%%%%%%%%%%%%%%%%%%%%%%%%%%%%%%%%%%%%%%%%%%%%%%%%%%%%%%%%%%%%%%%%%%%%%%%%
%%%%%%%%%%%%%%%%%%%%%%%%%%%%%%%%%%%%%%%%%%%%%%%%%%%%%%%%%%%%%%%%%%%%%%%%%%%%%%%%%%%%%%%%%%%%%%%%%%%%
%%%%%%%%%%%%%%%%%%%%%%%%%%%%%%%%%%%%%%%%%%%%%%%%%%%%%%%%%%%%%%%%%%%%%%%%%%%%%%%%%%%%%%%%%%%%%%%%%%%%
%%%%%%%%%%%%%%%%%%%%%%%%%%%%%%%%%%%%%%%%%%%%%%%%%%%%%%%%%%%%%%%%%%%%%%%%%%%%%%%%%%%%%%%%%%%%%%%%%%%%
%%%%%%%%%%%%%%%%%%%%%%%%%%%%%%%%%%%%%%%%%%%%%%%%%%%%%%%%%%%%%%%%%%%%%%%%%%%%%%%%%%%%%%%%%%%%%%%%%%%%
%%%%%%%%%%%%%%%%%%%%%%%%%%%%%%%%%%%%%%%%%%%%%%%%%%%%%%%%%%%%%%%%%%%%%%%%%%%%%%%%%%%%%%%%%%%%%%%%%%%%
%%%%%%%%%%%%%%%%%%%%%%%%%%%%%%%%%%%%%%%%%%%%%%%%%%%%%%%%%%%%%%%%%%%%%%%%%%%%%%%%%%%%%%%%%%%%%%%%%%%%
%%%%%%%%%%%%%%%%%%%%%%%%% Construction %%%%%%%%%%%%%%%%%%%%%%%%%%%%%%%%%%%%%%%%%%%%%%%%%%%%%%%%%%%%%

\section{Construction of Boundary Conditions}

In this section, we construct BCs of the form
\begin{equation}\label{3.1}
B\left(\begin{array}{cc}
      \bm u \\
      \bm p \\
\end{array}
\right)(0,t)=b_{\epsilon}(t)
\end{equation}
for the relaxation system \eqref{2.3},
where  $B=(B_u,B_p)$ is a constant matrix and
$$
b_{\epsilon}(t)=b_0(t)+\epsilon b_1(t)+\epsilon^2 b_2(t).\\
$$
Notice that coefficient matrix  
\begin{equation*}
 A:= \left(\begin{array}{cc}
      F     &   I_n \\
     \bar A-F^2  &  -F \\
   \end{array}
   \right)
\end{equation*}
for the relaxation system has $n$ positive eigenvalues $ \sqrt{a_j}$ $(j=1,2,\cdots,n).$ According to the classical theory~\cite{benzoni2007multi}, $n$ BCs should be given for the relaxation system. Therefore, the boundary matrix $B$ should be a full-rank $n\times 2n$-matrix. In what follows, by a boundary matrix we always mean that it is full-rank. 

Our construction bases on the expectation that the formal asymptotic solution \eqref{2.4}-\eqref{2.6} satisfies the BCs in \eqref{3.1} with $\epsilon=0$:
\begin{equation*}
B\left(\begin{array}{cc}
\bar {\bm u}_0(0,t)+{\bm\mu}_0(0,t) \\
\bar {\bm p}_0(0,t)+{\bm\nu}_0(0,t) \\
\end{array}
\right)=b_{0}(t).
\end{equation*}
From \eqref{2.8} and \eqref{2.16} it follows that
\begin{equation}\label{3.2}
(B_u,B_p)\left(\begin{array}{cc}
\bar {\bm u}_0(0,t)-F^{-1}{\bm\nu}_0(0,t)\\
{\bm\nu}_0(0,t)\\
\end{array}
\right)=b_{0}(t).
\end{equation}
In addition, it is expected that $\bm{\bar u}_0(x,t)$ is the solution to the conservation laws \eqref{2.1} with the BC \eqref{2.2} and certain initial data. Therefore, we require
\begin{equation}\label{3.3}
\hat B\bar{\bm u}_0(0,t)=\hat b(t).
\end{equation}
With \eqref{3.2} and \eqref{3.3}, we construct the boundary matrix $B$ and $b_0(t)$, while $b_1(t)$ and $b_2(t)$ will be  constructed in Section $6$ for compatibility of boundary and initial data. 

When $l=n,$ the coefficient matrix $F$ in \eqref{2.1} has $n$ positive eigenvalues and the boundary matrix $\hat B$ in \eqref{2.2} is invertible, say $\hat B=I_n$. The coefficient matrix $F\bar A^{-1}$ in \eqref{2.17} has only positive eigenvalues and therefore  $\bm \nu_0=0$ is the unique bounded solution thereof. Thus, it is natural to choose 
\begin{equation}\label{3.4}
   B_u=\hat B=I_n,\quad B_p \quad  \text{arbitrary} \quad \text{and} \quad b_0(t)=\hat b(t).
\end{equation}

For $l<n$, we recall that  
\begin{equation*}
T^{-1}FT = \Lambda \triangleq \left(\begin{array}{cc}
\Lambda^{+} & \\
&\Lambda^{-} \\
\end{array}\right)
\end{equation*}
with
\begin{equation*}
\Lambda^{+}=\text{diag}(\lambda_1,\cdots, \lambda_l), \qquad\quad \Lambda^{-}=\text{diag}(\lambda_{l+1},\cdots, \lambda_n).
\end{equation*}
Referring to this partition, we set $T=(R_1^U,R_1^S)$
and
 $
T^{-1}\bar{\bm u}_0(0,t)=\left(\begin{array}{c}
\alpha^+(t)\\
\alpha^-(t)\\
\end{array}
\right).
$
Then we have
\begin{equation}\label{3.5}
\bar{\bm u}_0(0,t)=T\left(\begin{array}{c}
\alpha^+(t)\\
\alpha^-(t)\\
\end{array}
\right)=(R_1^U,R_1^S)\left(\begin{array}{c}
\alpha^+(t)\\
\alpha^-(t)\\
\end{array}\right)=R_1^U\alpha^+(t)+R_1^S\alpha^-(t).
\end{equation}
Since  $\hat BR_1^{U}$ is assumed to be invertible, the BCs \eqref{3.3} with the relation \eqref{3.5} can be rewritten as
\begin{equation}\label{3.6} 
\alpha^+(t)=H\alpha^-(t)+J(t), 
\end{equation} 
where 
\begin{equation*}
H=-(\hat BR_1^U)^{-1}\hat BR_1^S,\qquad J(t)=(\hat BR_1^U)^{-1}\hat b(t).
\end{equation*}
Moreover, the relation \eqref{3.2} becomes
\begin{equation}\label{3.7}
(B_uR_1^U,B_p-B_uF^{-1})\left(\begin{array}{cc}
\alpha^+(t)\\
{\bm\nu}_0(0,t)\\
\end{array}\right)=b_{0}(t)-B_uR_1^S\alpha^-(t).
\end{equation}

In addition, recall the equation \eqref{2.17} for $\bm\nu_0$ with $F\bar A^{-1}=T\text{diag}(\frac{\lambda_1}{a_1},\cdots, \frac{\lambda_n}{a_n})T^{-1}$. For its solution $\bm{\nu}_0(\xi,t)$ to be bounded, the initial value $\bm{\nu}_0(0,t)$ must fulfill 
\begin{equation}\label{3.8}
L_1^U\bm{\nu}_0(0,t)=0,
\end{equation}
where $L_1^U$ consists of the first $l$ rows of 
$T^{-1}=\left(\begin{array}{c}L_1^U
\\L_1^S\\\end{array}\right)$. 
Combining this with \eqref{3.7}, we have
\begin{equation*}
\left(\begin{array}{cc}
B_uR_1^U & B_p-B_uF^{-1}\\
    0    & L_1^U\\
\end{array}\right)
\left(\begin{array}{cc}
\alpha^+(t)\\
{\bm\nu}_0(0,t)\\
\end{array}
\right)=\left(\begin{array}{c}
b_{0}(t)-B_uR_1^S\alpha^-(t)\\
0\\
\end{array}
\right).
\end{equation*}
Referring to Lemma $3.4$ in \cite{yong1999boundary}, we know that the matrix
$$\left(\begin{array}{cc}
  B_uR_1^U & B_p-B_uF^{-1}\\
      0    & L_1^U\\
\end{array}\right)$$
is invertible provided that the boundary matrix in \eqref{3.1} fulfills the generalized Kreiss condition~(GKC)  proposed in \cite{yong1999boundary}. Then we can obtain  $\bm\nu_0(0,t)$ by solving the above algebraic equations. Particularly, $\bm\nu_0(0,t)$ can be expressed as
\begin{equation}\label{3.9}
\bm{\nu}_0(0,t)=C\alpha^-(t)+D(t)
\end{equation}
with $C$ an $n\times (n-l)$ parameter matrix and $D(t)$ a function of $t$, satisfying  
\begin{equation}\label{3.10}
 L_1^UC= L_1^UD(t)=0
\end{equation} 
due to \eqref{3.8}.\vspace{1ex}

With \eqref{3.9} and \eqref{3.6}, the relation \eqref{3.7} becomes
\begin{equation*}
(B_uR_1^U,B_p-B_uF^{-1})
\left(\begin{array}{cc}
H\alpha^-(t)+J(t) \\
C\alpha^-(t)+D(t)\\
\end{array}
\right)
=b_0(t)-B_uR_1^S\alpha^-(t).
\end{equation*}
This holds for any $\alpha^-(t)$ determined by initial data, leading to
\begin{equation}\label{3.11}
b_0(t)=B_uR_1^UJ(t)+(B_p-B_uF^{-1})D(t)
\end{equation}
and
\begin{equation}\label{3.12}
(B_u,B_p)\left(\begin{array}{cc}
R_1^UH+R_1^S-F^{-1}C\vspace{1ex} \\
C\\
\end{array}\right)=0.
\end{equation}
Notice that $C\in \text{span}\{R_1^S\}$ due to \eqref{3.10}. Let $C=R_1^S\tilde C$ with $\tilde C$ an arbitrary $(n-l)\times(n-l)$ square matrix. Then \eqref{3.12} can be rewritten as
\begin{equation}\label{3.13}
\bar BZ\equiv(B_u,B_pR_1^S)\left(\begin{array}{c}
-R_1^U(\hat BR_1^U)^{-1}\hat BR_1^S+R_1^S-F^{-1}R_1^S\tilde C \vspace{1ex}\\
\tilde C\\
\end{array}\right)=0,
\end{equation}
where $\bar B=(B_u,B_pR_1^S)$ and $Z$ is the $(2n-l)\times(n-l)$-matrix.\vspace{1ex} 

Note that $Z$ is a full-rank matrix for any $\tilde C$. To see this, let $Zx=0$ with $x\in \mathbb R^{n-l}.$ Then we have $\tilde Cx=0$
and
\begin{equation*}
[I_n-R_1^U(\hat BR_1^U)^{-1}\hat B]R_1^Sx=0 \qquad\qquad \text{or} \qquad\qquad  R_1^Sx=R_1^U(\hat BR_1^U)^{-1}\hat BR_1^Sx.
\end{equation*}
The last equation means that the left-hand side belongs to $\text{span}\{R_1^S\},$ while the right-hand side is in $\text{span}\{R_1^U\}.$ Since $\text{span}\{R_1^S\}\cap \text{span}\{R_1^U\}=\emptyset,$ it must  be
\begin{equation*}
x=0.
\end{equation*}
Therefore, $Z$ is full-rank.\vspace{1ex}

 Thus, the column full-rank matrix $Z$ can be extended as a base of $\mathbb R^{2n-l}=\text{span}(Z,\bar Z)$ with $\bar Z$ a full-rank $(2n-l)\times n$-matrix. Hence, there exists an invertible matrix $\left(
\begin{array}{c}
\check B\\
\bar B\\
\end{array}
\right)$ such that
\begin{equation*}
\left(\begin{array}{c}
\check B\\
\bar B\\
\end{array}
\right)(Z,\bar Z)=I_{2n-l}.
\end{equation*}
Particularly, we have $\bar BZ=0.$ In this way, we obtain $\bar B=(B_u,B_pR_1^S)$ and thereby $B=(B_u,B_p).$\vspace{1ex}

Obviously, the boundary matrix $B=(B_u,B_p)$ thus constructed is not unique. The non-uniqueness has other two sources.
%Besides $B_p$ can not be uniquely determined from $\bar B,$ the non-uniqueness has other two sources.
The first one is that the matrix $Z$ in \eqref{3.13} depends on the free matrix $\tilde C$. On the other hand, once $Z$ is given, there are infinitely many $\bar B$ satisfying $\bar BZ=0.$ However, such $\bar B$ is unique up to an invertible $n\times n$-matrix multiplying $\bar B$ from right. To see this, let  $\bar B_1$ and $\bar B_2$ satisfy $\bar B_1Z=\bar B_2Z=0.$ We can show that the matrices $(Z,\bar B_1^*)$ and $(Z,\bar B_2^*)$ are both  invertible, where the  superscript $*$ denotes the conjugate transpose. In fact, if
\begin{equation*} 
(Z,\bar B_1^*)\left(\begin{array}{c}
\alpha\\
\beta\\
\end{array}
\right)=Z\alpha+\bar B_1^*\beta=0,
\end{equation*}
we multiply with $\bar B_1$ from right to obtain
\begin{equation*} 
\bar B_1\bar B_1^*\beta=0.
\end{equation*}
Because $\bar B_1$ is full-rank, $\bar B_1\bar B_1^*$ is invertible and thereby $\beta=0$. Moreover, since the matrix $Z$ is full-rank, we have $\alpha=0.$ Hence, the matrix $(Z,\bar B_1^*)$ is invertible. Consequently, $\bar B_2^{*}$ can be expressed as
\begin{equation*} 
\bar B_2^*=(Z,\bar B_1^*)\left(\begin{array}{c}
\gamma \\
\chi\\
\end{array}\right)=Z\gamma+\bar B_1^*\chi.
\end{equation*}
Multiplying with $Z^{*}$ from right we get $0=Z^{*}Z\gamma+0,$
which implies $\gamma=0.$ This indicates that $\bar B_2=\chi^{*}\bar B_1.$ Furthermore, $\chi^{*}$ is invertible for both $\bar B_1$ and $\bar B_2$ are full-rank.\vspace{1ex}

 In summary, for $l<n$ we have constructed the boundary matrix $B=(B_u,B_p)$ satisfying \eqref{3.13} with the freedoms above. Once $B$ is chosen, the right-hand side $ b_0(t)$ is determined completely with \eqref{3.11}. For $l=n$, the construction is given in \eqref{3.4}. These are the main results of this paper.

%%%%%%%%%%%%%%%%%%%%%%%%%%%%%%%%%%%%%%%%%%%%%%%%%%%%%%%%%%%%%%%%%%%%%%%%%%%%%%%%%%%%%%%%%%%%%%%%%%%%
%%%%%%%%%%%%%%%%%%%%%%%%%%%%%%%%%%%%%%%%%%%%%%%%%%%%%%%%%%%%%%%%%%%%%%%%%%%%%%%%%%%%%%%%%%%%%%%%%%%%
%%%%%%%%%%%%%%%%%%%%%%%%%%%%%%%%%%%%%%%%%%%%%%%%%%%%%%%%%%%%%%%%%%%%%%%%%%%%%%%%%%%%%%%%%%%%%%%%%%%%
%%%%%%%%%%%%%%%%%%%%%%%%%%%%%%%%%%%%%%%%%%%%%%%%%%%%%%%%%%%%%%%%%%%%%%%%%%%%%%%%%%%%%%%%%%%%%%%%%%%%
%%%%%%%%%%%%%%%%%%%%%%%%%%%%%%%%%%%%%%%%%%%%%%%%%%%%%%%%%%%%%%%%%%%%%%%%%%%%%%%%%%%%%%%%%%%%%%%%%%%%
%%%%%%%%%%%%%%%%%%%%%%%%%%%%%%%%%%%%%%%%%%%%%%%%%%%%%%%%%%%%%%%%%%%%%%%%%%%%%%%%%%%%%%%%%%%%%%%%%%%%
%%%%%%%%%%%%%%%%%%%%%%%%%%%%%%%%%%%%%%%%%%%%%%%%%%%%%%%%%%%%%%%%%%%%%%%%%%%%%%%%%%%%%%%%%%%%%%%%%%%%
%%%%%%%%%%%%%%%%%%%%%%%%%%%%%%%%%%%%%%%%%%%%%%%%%%%%%%%%%%%%%%%%%%%%%%%%%%%%%%%%%%%%%%%%%%%%%%%%%%%%
%%%%%%%%%%%%%%%%%%%%%%%%%%%%%%%%%%%%%%%%%%%%%%%%%%%%%%%%%%%%%%%%%%%%%%%%%%%%%%%%%%%%%%%%%%%%%%%%%%%%
%%%%%%%%%%%%%%%%%%%%%%%%% GKC %%%%%%%%%%%%%%%%%%%%%%%%%%%%%%%%%%%%%%%%%%%%%%%%%%%%%%%%%%%%%

\section{Generalized Kreiss Condition}
According to~\cite{yong1999boundary}, the boundary matrix $B$ constructed in Section $3$ should satisfy the generalized Kreiss condition~(GKC), which is essentially necessary to have a well-behaved limit when $\epsilon$ goes to zero. Thus, we review the GKC in this section. To do this, we recall the definition of a right-stable matrix for a square matrix.
\begin{definition}
Let $N\times N$-matrix $E$ have precisely $k$~$(0\leq k\leq N)$ stable eigenvalues. A full-rank $N\times k$-matrix $R_E^S$ is called a right-stable matrix of $E$ if
\begin{equation*}
ER_E^S=R_E^SS_-
\end{equation*}
with $S_-$ a $k\times k$ stable matrix.
\end{definition}

Notice that the coefficient matrix $A$ in \eqref{2.3} is invertible. We define 
\begin{equation*}
M(\eta,\xi_0)=A^{-1}(\eta S-\xi_0I_{2n})
\end{equation*}
for parameters $\eta\geq 0$ and $\xi_0\in \mathbb C$ with $\text{Re}\xi_0>0$.
 Here $S=\text{diag}(0,-I_n).$ Referring to Lemma $2.3$ in \cite{yong1999boundary}, we know that $M(\eta,\xi_0)$ has $n$ stable eigenvalues under the sub-characteristic condition
 \begin{equation}\label{4.1}
a_j\geq\lambda_j^2.
\end{equation}
 Here we give a direct proof of this fact. For this purpose, we recall the coefficient matrix
\begin{equation*}
   A=\left(
   \begin{array}{cc}
       F     &   I_n \\
     \bar A-F^2  &  -F \\
   \end{array}
    \right)
\end{equation*}
and use $F\bar A=\bar AF$ to rewrite 
 \begin{align*}
M=M(\eta,\xi_0) & = A^{-1}(\eta S-\xi_0I_{2n})\\[3mm]
  & = \left(\begin{array}{cc}
    \bar A^{-1}     &    0   \\
          0         &   \bar A^{-1}  \\
      \end{array}\right)\left(\begin{array}{cc}
          F      &    I_n   \\
        \bar A-F^2  &   -F  \\
      \end{array}\right)\left(\begin{array}{cc}
       -\xi_0 I_n  &       0           \\
            0      & -(\eta+\xi_0)I_n   \\
       \end{array}\right)\\[3mm]
  & = \left(
       \begin{array}{cc}
        -\xi_0 \bar A^{-1}F  & -(\eta+\xi_0)\bar A^{-1}\\
       -\xi_0(I_n-\bar A^{-1}F^2) & (\eta+\xi_0)\bar A^{-1}F\\
       \end{array}
        \right).
\end{align*}
Moreover, with $F=T\Lambda T^{-1}$ we have
 \begin{equation}\label{4.2}
\left(\begin{array}{cc}
T^{-1} &     0  \\
   0   &  T^{-1} \\
\end{array}	\right)M 
\left(\begin{array}{cc}
T & 0\\
0 & T\\
\end{array}\right)=\left(
\begin{array}{cc}
-\xi_0 (\bar\Lambda)^{-1}\Lambda       &  -(\eta+\xi_0)(\bar\Lambda)^{-1}    \\
-\xi_0(I_n-\left(\bar\Lambda)^{-1}\Lambda^2\right) &  (\eta+\xi_0)(\bar\Lambda)^{-1}\Lambda\\
\end{array}\right),
\end{equation}
where $\bar\Lambda=\text{diag}(a_1,\cdots,a_n)$ and $\Lambda=\text{diag}(\lambda_1,\cdots,\lambda_n).$ It is known that the eigenvalues of matrix $M$ are equal to the eigenvalues of the following matrix:
 \begin{equation*}
\frac{1}{a_j}\left(\begin{array}{cc}
-\xi_0\lambda_j      & -(\eta+\xi_0)\\
\xi_0(\lambda_j^2-a_j) & \lambda_j(\eta+\xi_0)\\
\end{array}
\right),\qquad j=1,2\cdots,n.
\end{equation*}
For this $2\times 2$-matrix, the corresponding characteristic polynomial is
\begin{equation*}
\lambda^2-\frac{\eta \lambda_j}{a_j}\lambda-\frac{(\eta+\xi_0)\xi_0}{a_j}=0.
\end{equation*}
Denote by $(\kappa_{j})_{\pm}$ the two solutions of the last equation. We have

\begin{lemma}
Under the sub-characteristic condition, it holds that $\text{Re}(\kappa_{j})_{+}\text{Re}(\kappa_{j})_{-}<0$ for each $j.$ 
\end{lemma}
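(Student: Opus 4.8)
The plan is to read the quadratic
$$\lambda^2-\frac{\eta\lambda_j}{a_j}\lambda-\frac{(\eta+\xi_0)\xi_0}{a_j}=0$$
as a root-location problem. Its two roots $(\kappa_j)_\pm$ sum to $\eta\lambda_j/a_j$, which is \emph{real}, so their imaginary parts are opposite and the assertion $\operatorname{Re}(\kappa_j)_+\operatorname{Re}(\kappa_j)_-<0$ says precisely that the two roots are separated by the imaginary axis, one in the open right and one in the open left half-plane (i.e. one unstable and one stable). I would prove this by a continuity argument over the connected parameter set $\{\eta\ge 0,\ \operatorname{Re}\xi_0>0\}$.

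The crucial and hardest step is to show that, under the sub-characteristic condition, no root ever lies on the imaginary axis, the origin included. I would substitute $\lambda=i\omega$ with $\omega\in\mathbb{R}$ into $a_j\lambda^2-\eta\lambda_j\lambda-(\eta+\xi_0)\xi_0=0$, set $\xi_0=\sigma+i\tau$ with $\sigma>0$, and separate real and imaginary parts to get $a_j\omega^2=\tau^2-(\eta+\sigma)\sigma$ and $\eta\lambda_j\omega=-(\eta+2\sigma)\tau$. For $\eta>0$ (recall $\lambda_j\ne0$ since $F$ is invertible) I would solve the second relation for $\omega$, insert it into the first, and use $a_j\ge\lambda_j^2$ to bound the left side below by $\tau^2(\eta+2\sigma)^2/\eta^2$; since $(\eta+2\sigma)^2-\eta^2=4\sigma(\eta+\sigma)$, this forces the nonnegative quantity $\tau^2\,4\sigma(\eta+\sigma)/\eta^2$ to equal $-(\eta+\sigma)\sigma<0$, a contradiction. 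The degenerate case $\eta=0$ collapses at once ($\tau=0$, then $a_j\omega^2=-\sigma^2<0$), and $\lambda=0$ is excluded because $(\eta+\xi_0)\xi_0\ne0$ when $\operatorname{Re}\xi_0>0$ and $\eta\ge0$. This is exactly where the sub-characteristic condition enters decisively; I expect it to be the main obstacle.

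Granting that the imaginary axis carries no root, I would conclude by continuity: the coefficients depend continuously (polynomially) on $(\eta,\xi_0)$, the roots of a monic quadratic depend continuously on its coefficients, and the parameter set is connected, so the number of roots with positive real part is a locally constant, integer-valued, hence globally constant function there. Evaluating at the test point $\eta=0,\ \xi_0=1$ gives $\lambda^2=1/a_j$, i.e. the roots $\pm 1/\sqrt{a_j}$, exactly one of which lies in each open half-plane. Therefore for every admissible $(\eta,\xi_0)$ exactly one root is unstable and one stable, their real parts are nonzero and of opposite sign, and the product $\operatorname{Re}(\kappa_j)_+\operatorname{Re}(\kappa_j)_-$ is negative, which also confirms the $n$ stable eigenvalues of $M$.

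As a self-contained alternative avoiding topology, I would write $(\kappa_j)_\pm=\tfrac12 p\pm\tfrac12 w$, where $p=\eta\lambda_j/a_j$ is real and $w$ is a square root of the discriminant $\Delta=p^2-4q$ with $q=-(\eta+\xi_0)\xi_0/a_j$. Then $\operatorname{Re}(\kappa_j)_+\operatorname{Re}(\kappa_j)_-=\tfrac14\big(p^2-(\operatorname{Re}w)^2\big)$, and using $(\operatorname{Re}w)^2=\tfrac12(|\Delta|+\operatorname{Re}\Delta)$ the claim reduces to the scalar inequality $|\Delta|>p^2+4\operatorname{Re}q$; after squaring (the case $p^2+4\operatorname{Re}q<0$ being immediate) this becomes $(\operatorname{Im}q)^2>p^2\operatorname{Re}q$, which is settled by the same identity $(\eta+2\sigma)^2-\eta^2=4\sigma(\eta+\sigma)$ together with $a_j\ge\lambda_j^2$. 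Either route isolates the sub-characteristic condition as the only nontrivial ingredient.
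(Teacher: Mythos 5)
Your proposal is correct and follows essentially the same route as the paper: you rule out purely imaginary roots by separating real and imaginary parts of the quadratic (arriving at exactly the paper's two relations and the same contradiction from $a_j\ge\lambda_j^2$ together with $(\eta+2\sigma)^2-\eta^2=4\sigma(\eta+\sigma)>0$), and then conclude by continuity of the roots over the connected parameter set, using the same reference point $\eta=0$, $\xi_0=1$. Your explicit handling of the degenerate case $\eta=0$ is a minor refinement of the paper's inequality chain, whose final strict inequality tacitly requires $\eta>0$.
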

\begin{proof}

Firstly, we show that $\text{Re}(\kappa_{j})_{+}\text{Re}(\kappa_{j})_{-}\neq 0.$  Otherwise, we may assume that $(\kappa_j)_{+}=ib$ with $b$ a real number. It follows that
\begin{equation}\label{4.3}
(\kappa_{j})_++(\kappa_{j})_-=\frac{\eta \lambda_j}{a_j},
\qquad\quad (\kappa_{j})_+(\kappa_{j})_-=-\frac{(\eta+\xi_0)\xi_0}{a_j},
\end{equation} 
and thereby $(\kappa_j)_{-}=\frac{\eta\lambda_j}{a_j}-ib.$ Set $\xi_0=\alpha+i\beta$ with $\alpha>0.$ We deduce that
\begin{equation*}\label{7.18}
(\kappa_j)_{+}(\kappa_j)_{-}=b^2+i\frac{\eta\lambda_j}{a_j}b=\frac{\beta^2-(\eta+\alpha)\alpha}{a_j}-i\frac{(\eta+2\alpha)\beta}{a_j}
\end{equation*}
and, therefore,
\begin{equation*}\label{7.20}
\frac{\eta\lambda_j}{a_j}b=-\frac{(\eta+2\alpha)\beta}{a_j},\qquad
b^2=\frac{\beta^2-(\eta+\alpha)\alpha}{a_j}.
\end{equation*}
These lead to
\begin{equation*}\label{7.22}
(\eta+2\alpha)^2\frac{\beta^2}{\lambda_j^2}=\eta^2\frac{\beta^2-(\eta+\alpha)\alpha}{a_j}.
\end{equation*}
By the sub-characteristic condition \eqref{4.1}, we see that 
\begin{equation*}
 (\eta+2\alpha)^2\beta^2\leq\eta^2(\beta^2-(\eta+\alpha)\alpha)<\eta^2\beta^2,
\end{equation*}
which is impossible. Thus, we have shown that $\text{Re}(\kappa_{j})_{+}\text{Re}(\kappa_{j})_{-}\neq 0.$\vspace{1ex}

On the other hand, it is clear that $\text{Re}(\kappa_{j})_{+}\text{Re}(\kappa_{j})_{-}<0$ when $\xi_0=1$ and $\eta=0.$ Hence, by the continuity of $(\kappa_{j})_{\pm}$ with respect to the coefficients, we have $\text{Re}(\kappa_{j})_{+}\text{Re}(\kappa_{j})_{-}<0$ for all $\eta\geq 0$ and complex number $\xi_0$ with $\text{Re}\xi_0>0.$ This completes the proof.
\end{proof}

According to the above fact, the right-stable matrix $R_M^S(\eta,\xi_0)$ of $M(\eta,\xi_0)$ is a $2n\times n$-matrix. Note that the boundary matrix $B$ is an $n\times2n$ full-rank matrix.   Then the GKC can be stated as~\cite{yong1999boundary}: there  exists a constant $c_K>0$ such that
\begin{equation}\label{4.4}
\left|\text{det}\{BR_M^S(\eta,\xi_0)\}\right|\geq c_K\sqrt{\text{det}\{R_M^{S*}(\eta,\xi_0)R_M^S(\eta,\xi_0)\}}
\end{equation}
for all $\eta\geq 0$ and $\xi_0$ with $\text{Re} \xi_0>0.$ Here the  superscript $*$ denotes the conjugate transpose. \vspace{1ex} 

In order to verify the GKC for the boundary matrix $B$ constructed in Section $3,$ we need a detailed expression of $R_M^S(\eta,\xi_0).$ As above, let $(\kappa_{j})_+$ denote unstable eigenvalues of $M$ with $\text{Re}(\kappa_j)_{+}>0$ and $(\kappa_{j})_-$ stand for stable eigenvalues with $\text{Re}(\kappa_j)_{-}<0~(j=1,2\cdots,n).$ Then, according to \eqref{4.2} and \eqref{4.3} we have 
\begin{align*}
&\left(\begin{array}{cc}
T^{-1} &     0  \\
   0   &  T^{-1} \\
\end{array} \right) M \left(\begin{array}{cc}
                             T & 0\\
                             0 & T\\
                            \end{array}\right)
                      \left(\begin{array}{cc}
                                    e_j\\
                       \Big(\frac{a_j}{\eta+\xi_0}(\kappa_j)_+-\lambda_j\Big)e_j\\
                      \end{array}
                      \right)\\[3mm]
=& \left(
   \begin{array}{cc}
      -\xi_0 (\bar\Lambda)^{-1}\Lambda            &  -(\eta+\xi_0)(\bar\Lambda)^{-1}    \\
    -\xi_0(I_n-\left(\bar\Lambda)^{-1}\Lambda^2\right) &  (\eta+\xi_0)(\bar\Lambda)^{-1}\Lambda\\
   \end{array}\right) \left(\begin{array}{cc}
                        e_j\\
                   \left(\frac{a_j}{\eta+\xi_0}(\kappa_j)_+-\lambda_j\right)e_j\\
                   \end{array}\right)\\[3mm]
=& \left(\begin{array}{cc}
        \left(-(\kappa_j)_++\frac{\lambda_j\eta}{a_j}\right)e_j\\
    \left(-\xi_0-\frac{\eta\lambda_j^2}{a_j}+\lambda_j(\kappa_j)_+\right)e_j\\
                   \end{array}\right)  \\[4mm]
=& \left(\begin{array}{cc}
        (\kappa_j)_-e_j\\
     \left(-\xi_0-\lambda_j(\kappa_j)_-\right)e_j\\
                   \end{array}\right) \\[3mm]
=& \left(\begin{array}{cc}
        (\kappa_j)_-e_j\\
     \left(\frac{a_j}{\eta+\xi_0}(\kappa_j)_-(\kappa_j)_+-\lambda_j(\kappa_j)_-\right)e_j\\
                   \end{array}\right) \\[3mm]
=& (\kappa_j)_-\left(
\begin{array}{cc}
e_j\\
(\frac{a_j}{\eta+\xi_0}(\kappa_j)_+-\lambda_j)e_j\\
\end{array}
\right).
\end{align*}
Thus, $R_{M}^{S}(\eta,\xi_0)$ can be taken to be
\begin{align}\label{RMS}
R_{M}^{S}(\eta,\xi_0)&=\left(\begin{array}{cc}
T & 0\\
0 & T\\
\end{array}\right)
\left(\begin{array}{ccc}
e_1 & \quad \dots  & \quad  e_n\\
\left(\frac{a_1}{\eta+\xi_0}(\kappa_1)_+-\lambda_1\right)e_1 & \quad \cdots &  \quad \left(\frac{a_n}{\eta+\xi_0}(\kappa_n)_{+}-\lambda_n\right)e_n
\end{array}\right)\nonumber\\[2mm]
&\equiv \left(\begin{array}{cc}
T & 0\\
0 & T\\
\end{array}\right)\left(\begin{array}{cc}
I_n\\
Q\\
\end{array}\right),
\end{align}
where $e_j$ represents the j-th column of the identity matrix $I_n$ and $Q=\text{diag}(q_1,q_2,\cdots, q_n)$ with $q_j=\frac{a_j}{\xi_0+\eta}(\kappa_j)_{+}-\lambda_j~(j=1,2\cdots,n).$ Consequently, we obtain
\begin{equation*}
BR_{M}^{S}(\eta,\xi_0)=(B_u,B_p)\left(\begin{array}{cc}
T & 0\\
0 & T\\
\end{array}\right)\left(\begin{array}{cc}
I_n\\
Q\\
\end{array}\right)=\tilde B_u+\tilde B_pQ.
\end{equation*}
Here $\tilde B_u=B_uT$ and $\tilde B_p=B_pT.$\vspace{1ex} 

Set $\tilde B=(\tilde B_u,\tilde B_p)$ and
\begin{equation*}
\tilde R_M^{S}(\eta,\xi_0)=\left(\begin{array}{c}
I_n\\Q
\end{array}\right).
\end{equation*} 
Then the GKC \eqref{4.4} can be rewritten as
\begin{equation}\label{4.5}
\frac{\left|\text{det}\{\tilde B\tilde R_{M}^{S}(\eta,\xi_0)\}\right|}{\sqrt{\text{det}\{\tilde R_M^{S*}(\eta,\xi_0)\tilde R_M^S(\eta,\xi_0)\}}}\geq c_K>0,
\end{equation}
where Lemma 3.3 in \cite{yong1999boundary} is used.
And the denominator can be calculated as  
\begin{equation*}
\text{det}\{\tilde R_M^{S*}(\eta,\xi_0)\tilde R_M^S(\eta,\xi_0)\}=\text{det}\{I_n+Q^*Q\}=\prod_{j=1}^n(1+|q_j|^2).
\end{equation*}

Here is a uniform estimate for $q_j=\frac{a_j}{\xi_0+\eta}(\kappa_j)_{+}-\lambda_j$ depending on the parameters $\eta$ and $\xi_0.$
\begin{lemma}\label{lemma 4.1}
Under the  sub-characteristic condition, we have the  uniform estimate
\begin{align*}
 \left|q_j(\eta,\xi_0)\right|=\left|\frac{a_j}{\xi_0+\eta}(\kappa_j)_{+}-\lambda_j\right|\leq (\sqrt 2+1)\sqrt a_j,\quad &j=1,2,\cdots,n.
\end{align*}
Under the strict sub-characteristic condition $a_j>\lambda_j^2$, there is a positive constant $c$ such that
\begin{align*}
 |q_j(\eta,\xi_0)|\geq c
\end{align*}
for $j>l$, all $\eta\geq 0$ and all complex number $\xi_0$ with $\text{Re}\xi_0>0.$

\end{lemma}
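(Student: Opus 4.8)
The plan is to reduce the whole lemma to a single scalar quadratic satisfied by $q_j$ and read both bounds off its coefficients. I introduce the auxiliary variable $z=\xi_0/(\eta+\xi_0)$. Since $\eta\ge0$ and $\text{Re}\,\xi_0>0$, one has $|\eta+\xi_0|^2=|\xi_0|^2+\eta(\eta+2\,\text{Re}\,\xi_0)\ge|\xi_0|^2$ and $\text{Re}(1/z)=1+\eta\,\text{Re}(1/\xi_0)\ge1$, so $z$ lies in the closed disk $D=\{\,|z-\tfrac12|\le\tfrac12\,\}$; in particular $|z|\le1$, $|1+z|\le2$, and $\text{Re}(1+z)\ge1$ (whence $|1+z|\ge1$). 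Writing $\tilde q_j^{\pm}=\frac{a_j}{\eta+\xi_0}(\kappa_j)_{\pm}-\lambda_j$ and using the Vieta relations \eqref{4.3}, a direct computation gives the sum $\tilde q_j^{+}+\tilde q_j^{-}=-\lambda_j(1+z)$ and the product $\tilde q_j^{+}\tilde q_j^{-}=z(\lambda_j^2-a_j)$. Hence $q_j=\tilde q_j^{+}$ is a root of
\begin{equation*}
q^2+\lambda_j(1+z)q+z(\lambda_j^2-a_j)=0.\tag{$\star$}
\end{equation*}

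For the upper bound I would not even need to identify which root $q_j$ is. From $(\star)$ and the triangle inequality, $|q_j|^2\le|\lambda_j|\,|1+z|\,|q_j|+|z|\,|\lambda_j^2-a_j|$. The sub-characteristic condition gives $|\lambda_j|\le\sqrt{a_j}$ and $|\lambda_j^2-a_j|=a_j-\lambda_j^2\le a_j$, while $|1+z|\le2$ and $|z|\le1$; therefore $|q_j|^2\le2\sqrt{a_j}\,|q_j|+a_j$. Solving the scalar inequality $t^2-2\sqrt{a_j}\,t-a_j\le0$ in $t=|q_j|$ yields $t\le(1+\sqrt2)\sqrt{a_j}$, which is the stated estimate and in fact holds for either root of $(\star)$.

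For the lower bound the first observation is that $q_j$ is invariant under the scaling $(\eta,\xi_0)\mapsto(s\eta,s\xi_0)$, $s>0$: the $\kappa$-quadratic shows $(\kappa_j)_{+}(s\eta,s\xi_0)=s\,(\kappa_j)_{+}(\eta,\xi_0)$, and the factor $s$ cancels in $q_j=\frac{a_j}{\eta+\xi_0}(\kappa_j)_{+}-\lambda_j$. Consequently $q_j$ depends only on $z$, and by the strict separation $\text{Re}(\kappa_j)_{-}<0<\text{Re}(\kappa_j)_{+}$ of the preceding lemma it is a continuous function of $z$ on the compact disk $D$. It thus suffices to show $q_j(z)\ne0$ on $D$, for then $c=\min_{z\in D}|q_j(z)|>0$. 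For $z\ne0$ the product of the two roots of $(\star)$ equals $z(\lambda_j^2-a_j)\ne0$ (here the strict condition $a_j>\lambda_j^2$ enters), so neither root vanishes and $q_j\ne0$. At the remaining point $z=0$ (reached as $\eta\to\infty$) the roots of $(\star)$ degenerate to $0$ and $-\lambda_j$; evaluating along $\xi_0=1$, $\eta\to\infty$ one finds $(\kappa_j)_{+}\to-\xi_0/\lambda_j$ and hence $q_j\to-\lambda_j$, which is nonzero precisely because $j>l$ forces $\lambda_j<0$. This pins $q_j(0)=-\lambda_j\ne0$ and completes the non-vanishing, giving the uniform lower bound.

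The main obstacle is exactly the behaviour at $z=0$: one root of $(\star)$ genuinely vanishes there, so the estimate hinges on checking that $q_j$ is the \emph{other}, nonzero root, equivalently that the vanishing root is the stable branch $\tilde q_j^{-}$. This is where the hypotheses $j>l$ and $a_j>\lambda_j^2$ are indispensable and why the statement is restricted to $j>l$ (for $j\le l$ the root $q_j$ can legitimately vanish). A convenient bookkeeping device is the elementary bound $\max(|\tilde q_j^{+}|,|\tilde q_j^{-}|)\ge\tfrac12|\tilde q_j^{+}+\tilde q_j^{-}|=\tfrac12|\lambda_j|\,|1+z|\ge\tfrac12|\lambda_j|$: combined with the product identity it shows the two root moduli are well separated for small $|z|$, so $q_j$ cannot switch to the small root near $z=0$, while away from $z=0$ the product bound already keeps $|q_j|$ bounded below.
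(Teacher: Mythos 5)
Your proof is correct, and for the lower bound it takes a genuinely different route from the paper's. The paper proves the upper bound by manipulating the explicit root formula, bounding $\left|\sqrt{a_j+(\lambda_j^2-a_j)\left(\tfrac{\eta}{\eta+2\xi_0}\right)^2}-\lambda_j\right|$ directly; your version---deriving from the Vieta relations \eqref{4.3} the quadratic $q^2+\lambda_j(1+z)q+z(\lambda_j^2-a_j)=0$ with $z=\xi_0/(\eta+\xi_0)$ in the disk $|z-\tfrac12|\le\tfrac12$, then solving the scalar inequality $t^2\le 2\sqrt{a_j}\,t+a_j$---reaches the same constant $(\sqrt2+1)\sqrt{a_j}$ without handling branches of complex square roots, and applies to both roots, a fact you later exploit. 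For the lower bound the paper follows \cite{xin2000stiff}: for each fixed $\eta$, $h_j=\frac{a_j}{\xi_0+\eta}(\kappa_j)_+$ is shown to be univalent analytic on $\mathrm{Re}\,\xi_0>0$, the conformal mapping theorem gives a bounded closed image $\Omega$ whose boundary meets the real axis only at $h_j(0)=0$ (when $j>l$) and $h_j(\pm\infty)=\sqrt{a_j}$, so $\lambda_j<0$ lies outside $\Omega$ and $|q_j|=|h_j-\lambda_j|$ is bounded below. You instead argue algebraically: scaling invariance reduces everything to the single variable $z$; for $|z|\ge\delta$ the product identity $\tilde q_j^{+}\tilde q_j^{-}=z(\lambda_j^2-a_j)$ together with the upper bound on the other root gives the explicit bound $|q_j|\ge\delta(a_j-\lambda_j^2)/((\sqrt2+1)\sqrt{a_j})$; near $z=0$ the root separation (the sum of the roots has modulus at least $|\lambda_j|$ while the product is $O(|z|)$) plus the limit $q_j\to-\lambda_j$ along $\xi_0=1$, $\eta\to\infty$ pins $q_j$ to the non-vanishing branch. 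Your route is more elementary (no univalence or conformal mapping needed), produces explicit constants, and makes uniformity in $\eta$ transparent, whereas in the paper this uniformity is implicit because $\Omega$ is constructed separately for each $\eta$. What the paper's method buys is a description of the closure of the range of $q_j$ as a region bounded by an explicitly parametrized curve; this is reused later, since the GKC verifications in Section 5 and in the appendix rely on knowing that this closure meets the real axis exactly in $[0,\sqrt{a_j}-\lambda_j]$, information your bounds alone do not provide. One minor imprecision: $q_j$ is not defined on the whole closed disk $D$ (the image of the parameter domain is the open punctured disk together with the point $z=1$), so the phrase ``continuous on the compact disk $D$'' and the quantity $\min_{z\in D}|q_j(z)|$ are not literally meaningful; this is harmless, because your final two-regime argument never evaluates $q_j$ at the unattained points and is what actually delivers the uniform constant.
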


\begin{proof}
For $\eta\geq 0$ and $\text{Re} \xi_0>0,$ under the sub-characteristic condition we have
     \begin{align*}
|q_j|&=\left|\frac{\sqrt{4a_j\xi_0^2+4a_j\xi_0\eta+\lambda_j^2\eta^2}-\lambda_j(\eta+2\xi_0)}{2(\xi_0+\eta)}\right |\\[4mm]
 &\leq\left|\frac{\sqrt{4a_j\xi_0^2+4a_j\xi_0\eta+\lambda_j^2\eta^2}-\lambda_j(\eta+2\xi_0)}{\eta+2\xi_0}\right|\\[4mm]
     & =\left |\sqrt{a_j+(\lambda_j^2-a_j)(\frac{\eta}{\eta+2\xi_0})^2}-\lambda_j\right|\\[4mm]
     & \leq \sqrt {2a_j-\lambda_j^2}+\sqrt a_j\\[3mm]
     & \leq (\sqrt 2+1)\sqrt a_j.
    \end{align*}

For the lower bound, we  refer to \cite{xin2000stiff} and know that $q_j=h_j-\lambda_j$ is analytic in $\text{Re}\xi_0>0$ for each $\eta\geq 0$ under the sub-characteristic condition, where 
$$
h_j(\xi_0)=\frac{a_j}{\xi_0+\eta}(\kappa_j)_{+}=\frac{\eta\lambda_j+\sqrt{4a_j\xi_0^2+4a_j\xi_0\eta+\lambda_j^2\eta^2}}{2(\xi_0+\eta)}.
$$ 
From this we can solve
\begin{equation*}
\xi_0=\frac{\eta h_j(\xi_0)\left(h_j(\xi_0)-\lambda_j\right)}{a_j-h_j^2(\xi_0)}
\end{equation*}
under the strict sub-characteristic condition $\lambda_j<\sqrt a_j$.
This indicates that $h_j(\xi_0)$ is a univalent analytic function. According to the conformal mapping theorem, $h_j=h_j(\xi_0)$  maps the half plane $\text{Re} \xi_0>0$ to a simply connected closed bounded domain $\Omega \subset \mathbb C$ and maps the imaginary axis $\text{Re}\xi_0=0$ to the boundary of $\Omega.$ Let $\xi_0=\theta_1+i\theta_2$ with $\theta_1>0.$ The boundary is parametrized as 
     \begin{equation*}
        h_j(i\theta_2)=\frac{\eta\lambda_j+\sqrt{-4a_j\theta_2^2+4a_j\eta i\theta_2+\lambda_j^2\eta^2}}{2(i\theta_2+\eta)},
      \end{equation*}
 which is a closed curve and intersects the real axis only at $\theta_2=0$ and $\theta_2=\pm\infty$:
     \begin{equation*}
      h_j(0)=\frac{|\lambda_j|+\lambda_j}{2},\quad h_j(\pm\infty)=\sqrt a_j.
      \end{equation*}
When $j>l,$ we have $\lambda_j<0$, $h_j(0)=0$ and therefore $\lambda_j\notin\Omega$. Because $\Omega$ is closed, there exists a positive constant $c$ such that
      \begin{equation*}
      |q_j(\eta,\xi_0)|=|h_j(\eta,\xi_0)-\lambda_j|\geq c,\quad j=l+1,\cdots, n.
      \end{equation*}
This completes the proof.
\end{proof}

%%%%%%%%%%%%%%%%%%%%%%%%%%%%%%%%%%%%%%%%%%%%%%%%%%%%%%%%%%%%%%%%%%%%%%%%%%%%%%%%%%%%%%%%%%%%%%%%%%%%
%%%%%%%%%%%%%%%%%%%%%%%%%%%%%%%%%%%%%%%%%%%%%%%%%%%%%%%%%%%%%%%%%%%%%%%%%%%%%%%%%%%%%%%%%%%%%%%%%%%%
%%%%%%%%%%%%%%%%%%%%%%%%%%%%%%%%%%%%%%%%%%%%%%%%%%%%%%%%%%%%%%%%%%%%%%%%%%%%%%%%%%%%%%%%%%%%%%%%%%%%
%%%%%%%%%%%%%%%%%%%%%%%%%%%%%%%%%%%%%%%%%%%%%%%%%%%%%%%%%%%%%%%%%%%%%%%%%%%%%%%%%%%%%%%%%%%%%%%%%%%%
%%%%%%%%%%%%%%%%%%%%%%%%%%%%%%%%%%%%%%%%%%%%%%%%%%%%%%%%%%%%%%%%%%%%%%%%%%%%%%%%%%%%%%%%%%%%%%%%%%%%

\section{Verification of the Generalized Kreiss Condition}

In this section, we verify the GKC for the constructed boundary matrix $ B=(B_u,B_p)$. According to Lemma~\ref{lemma 4.1}, the determinant $\text{det}\{\tilde R_M^{S*}(\eta,\xi_0)\tilde R_M^S(\eta,\xi_0)\}$ in \eqref{4.5} has a positive upper bound. Thus, it suffices to show that $|\text{det}\{\tilde B\tilde R_{M}^{S}(\eta,\xi_0)\}|$ has a positive lower bound.\vspace{1ex}

To this end, we recall the constraint~\eqref{3.13} for the boundary matrix $B=(B_u,B_p)$:
\begin{equation*}
B_pR_1^S\tilde C+B_u(-F^{-1}R_1^S\tilde C+R_1^S+R_1^UH)=0
\end{equation*}
with $H=-(\hat BR_1^U)^{-1}\hat BR_1^S.$ In terms of $\tilde B_u=B_uT$ and $\tilde B_p=B_pT,$ this constraint can be rewritten as
\begin{equation*}
\tilde B_p(T^{-1}R_1^S\tilde C)+\tilde B_u\big(-\Lambda^{-1}(T^{-1}R_1^S\tilde C)+T^{-1}R_1^S+T^{-1}R_1^UH\big)=0.
\end{equation*}
Because $I_n=T^{-1}T=T^{-1}(R_1^U,R_1^S),$ we have

\begin{equation*}
T^{-1}R_1^U=\left(\begin{array}{cccc}
I_l\\
0
\end{array}\right),\qquad T^{-1}R_1^S=\left(\begin{array}{cccc}
0\\
I_{n-l}
\end{array}\right).
\end{equation*}
Consequently, the constraint becomes
\begin{equation}\label{5.1}
\tilde B_p\left(\begin{array}{cccc}
0\\
\tilde C
\end{array}\right)+\tilde B_u\left(\begin{array}{cccc}
H\\
I_{n-l}-\Lambda_-^{-1}\tilde C
\end{array}\right)=0
\end{equation}
with $\Lambda_{-}=\text{diag}(\lambda_{l+1},\cdots,\lambda_n)<0$ and $\tilde C$ a free $(n-l)\times (n-l)$-matrix.\vspace{1ex}

Next we proceed for different $n$  or $l$.

%%%%%%%%%%%%%%%%%%%%%%%%%%%%%%%%%%%%%%%%%%%%%%%%%%%%%%%%%%%%%%%%%%%%%%%%%%%%%%%%%%%%%%%%%%%%%%%%%%%%
%%%%%%%%%%%%%%%%%%%%%%%%%%%%%%%%%    n=1     %%%%%%%%%%%%%%%%%%%%%%%%%%%%%%%%%%%%%%%%%%%%%%%%%%%%%%%

\subsection{$n=1$}
In this case, the coefficient matrix $F$ in \eqref{2.1} is a number and $T=1$. When $F>0,$ the constructed BC in \eqref{3.4} reads as
\begin{equation}\label{5.2}
B_u=\hat B=1,\qquad  B_p\quad  \text{arbitrary}.
\end{equation}
Then
$$
\text{det}\{\tilde B\tilde R_{M}^{S}(\eta,\xi_0)\}=\tilde B_u+\tilde B_pq=B_uT+B_pTq=1+B_pq,
$$
where $q=\frac{a_1}{\xi_0+\eta}(\kappa_1)_{+}-F.$  When $ B_p=0$, the GKC obviously holds. When $B_p\neq0$, referring to the proof of Lemma~\ref{lemma 4.1}, we know that $|\text{det}\{\tilde B\tilde R_{M}^{S}(\eta,\xi_0)\}|$ has a positive lower bound if 
\begin{equation*}
\frac{1}{B_p}\notin [F-\sqrt a_1,0].
\end{equation*}
This indicates that $B_p>\frac{1}{F-\sqrt a_1}$.\vspace{1ex}

For  $F<0$, the constraint~\eqref{5.1} becomes
\begin{equation*}
\tilde B_p\tilde C+\tilde B_u(1-F^{-1}\tilde C)=0
\end{equation*}
with number $\tilde C$ to be determined. If $\tilde C=F$, this constraint implies that $\tilde B_p=0$. Then we must have $\tilde B_u=1$ in order that the boundary matrix $B=(B_u,B_p)$ is full-rank. Thus the determinant 
$$
\text{det}\{\tilde B\tilde R_{M}^{S}(\eta,\xi_0)\}=\tilde B_u+\tilde B_pq=1
$$
and thereby the GKC holds. \vspace{1ex}

If $\tilde C\neq F$, it follows from the constraint that 
\begin{equation*}
\tilde B_u=\frac{F\tilde C}{\tilde C-F}\tilde B_p.  
\end{equation*}
Thus we must have $\tilde B_p\neq 0$ in order that the boundary matrix $B=(B_u,B_p)$ is full-rank.
Then 
$$
\text{det}\{\tilde B\tilde R_{M}^{S}(\eta,\xi_0)\}=\tilde B_u+\tilde B_pq=\tilde B_p\left(\frac{F\tilde C}{\tilde C-F}+q\right),
$$
where $q=\frac{a_1}{\xi_0+\eta}(\kappa_1)_{+}-F.$ 
Referring to the proof of Lemma~\ref{lemma 4.1}, this determinant has a positive lower bound if
\begin{equation*}
\frac{F\tilde C}{\tilde C-F}\notin [F-\sqrt a_1,F].
\end{equation*}
This means that $ \tilde C\in\big(F-\frac{F^2}{\sqrt a_1},F\big)\cup\big(F,+\infty\big).$ Combining the above discussions, we have the following conclusion.
\begin{prop}\label{prop 5.1}
For $n=1$, when $F>0$, the GKC holds for the construction \eqref{5.2} with $B_p>\frac{1}{F-\sqrt a_1}$.
When $F<0$, the GKC holds for the full-rank matrix $ B=(B_u,B_p)$ satisfying the constraint \eqref{5.1} with
$$
\tilde C>F-\frac{F^2}{\sqrt a_1}.
$$
\end{prop}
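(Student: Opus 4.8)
The plan is to reduce the GKC \eqref{4.5} to a uniform lower bound on its numerator and then read off the admissible parameters from the geometry of a single scalar quantity. Since $n=1$ we have $T=1$, so $\tilde B_u=B_u$, $\tilde B_p=B_p$, $\lambda_1=F$, and $\det\{\tilde B\tilde R_M^S\}=B_u+B_pq$ with $q=q(\eta,\xi_0)=\frac{a_1}{\xi_0+\eta}(\kappa_1)_{+}-F$. By the first estimate of Lemma~\ref{lemma 4.1} the denominator $\sqrt{1+|q|^2}$ is uniformly bounded above, so it suffices to exhibit a constant $c>0$ with $|B_u+B_pq|\geq c$ for all $\eta\geq0$ and $\mathrm{Re}\,\xi_0>0$. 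The entire question then becomes: keep the real point $-B_u/B_p$ (when $B_p\neq0$) at a uniformly positive distance from the planar range $\mathcal R:=\{q(\eta,\xi_0)\}$, whose shape I would read off from the conformal-map description already set up in the proof of Lemma~\ref{lemma 4.1}, where $h(\xi_0)=q+F$ maps $\mathrm{Re}\,\xi_0>0$ onto a bounded domain meeting the real axis only at $h(0)=\tfrac{|F|+F}{2}$ and $h(\pm\infty)=\sqrt{a_1}$.

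For $F>0$ the construction \eqref{5.2} gives $B_u=1$ and $B_p$ free, so $\det=1+B_pq$; the sub-case $B_p=0$ is immediate since $\det\equiv1$. For $B_p\neq0$ I factor $1+B_pq=B_p(q+1/B_p)$ and require $-1/B_p\notin\overline{\mathcal R}$. Here $h(0)=F$, so the real trace of $\overline{\mathcal R}$ is the segment $[0,\sqrt{a_1}-F]$, and excluding $-1/B_p$ from it is exactly $1/B_p\notin[F-\sqrt{a_1},0]$; using $F-\sqrt{a_1}<0$ this simplifies to $B_p>\frac{1}{F-\sqrt{a_1}}$, the stated condition.

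For $F<0$ (so $l=0$) the constraint \eqref{5.1} reads $\tilde B_p\tilde C+\tilde B_u(1-F^{-1}\tilde C)=0$. If $\tilde C=F$ this forces $\tilde B_p=0$, and full rank then gives $\tilde B_u=1$ with $\det\equiv1$. If $\tilde C\neq F$ I solve for $\tilde B_u=m\,\tilde B_p$ with $m=\frac{F\tilde C}{\tilde C-F}$, so $\det=\tilde B_p(m+q)$ and the requirement is $-m\notin\overline{\mathcal R}$. Now $h(0)=0$, so the real trace of $\overline{\mathcal R}$ is $[-F,\sqrt{a_1}-F]$ and the exclusion becomes $m\notin[F-\sqrt{a_1},F]$. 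I would finish by checking that $\tilde C\mapsto m$ is strictly monotone on each branch of $\tilde C\lessgtr F$, solving $m=F-\sqrt{a_1}$ to get $\tilde C=F-\frac{F^2}{\sqrt{a_1}}$, and combining the two branches to obtain precisely $\tilde C>F-\frac{F^2}{\sqrt{a_1}}$.

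I expect the main obstacle to be not the algebra but the \emph{uniformity in $\eta$}: the conformal-map picture of Lemma~\ref{lemma 4.1} is set up for a fixed $\eta$, whereas I need a single $c>0$ valid for all $\eta\geq0$ and $\mathrm{Re}\,\xi_0>0$. I would close this by observing that the uniform upper bound of Lemma~\ref{lemma 4.1} confines every $\overline{\mathcal R}$ to one fixed compact set while its real trace (determined by $h(0)$ and $h(\pm\infty)$) is $\eta$-independent, so compactness yields a uniform gap. The genuinely delicate point is to argue that the target real number lies \emph{outside} the full two-dimensional closure $\overline{\mathcal R}$ and not merely off its real trace, which is what turns the segment conditions above into a bona fide positive lower bound.
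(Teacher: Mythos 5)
Your proposal is correct and follows essentially the same route as the paper: reduce the GKC to a uniform lower bound on $|\tilde B_u+\tilde B_p q|$, split into the cases $B_p=0$ (resp.\ $\tilde C=F$) and the factored form, and use the conformal-map picture from the proof of Lemma~\ref{lemma 4.1} to exclude the real point $-1/B_p$ (resp.\ $-F\tilde C/(\tilde C-F)$) from the intervals $[F-\sqrt{a_1},0]$ and $[F-\sqrt{a_1},F]$, yielding exactly the stated parameter ranges. In fact you are slightly more careful than the paper, which invokes the lower bound without explicitly addressing the uniformity in $\eta$ or the distinction between the real trace and the full closure of the range of $q$.
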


%%%%%%%%%%%%%%%%%%%%%%%%%%%%%%%%%%%%%%%%%%%%%%%%%%%%%%%%%%%%%%%%%%%%%%%%%%%%%%%%%%%%%%%%%%%%%%%%%%%%
%%%%%%%%%%%%%%%%%%%%%%%%%%%%%%%%%    n=2,l=1     %%%%%%%%%%%%%%%%%%%%%%%%%%%%%%%%%%%%%%%%%%%%%%%%%%%

\subsection{$n=2,~l=1$}

In this case, we partition $\tilde B_u=(\tilde B_{u1},\tilde B_{u2})$ and $\tilde B_p=(\tilde B_{p1},\tilde B_{p2})$. Then the constraint \eqref{5.1} becomes
\begin{equation}\label{5.3}
(\tilde B_{p1},\tilde B_{p2})\left(\begin{array}{cccc}
0\\
\tilde C
\end{array}\right)+(\tilde B_{u1},\tilde B_{u2})\left(\begin{array}{cccc}
H\\
1-\frac{\tilde C}{\lambda_2}
\end{array}\right)=0.
\end{equation}
Notice that $H$ and $\tilde C$ are numbers.\vspace{1ex} 

If $\tilde C= 0,$ it follows from \eqref{5.3} that 
$
\tilde B_{u2}=-\tilde B_{u1}H.
$
Then we have
\begin{align*}
\tilde B\tilde R_{M}^{S}(\eta,\xi_0)
&=\tilde B_u+\tilde B_p\left(\begin{array}{cc}
                              Q^+ & 0\\
                                0 & Q^-
                             \end{array}
                        \right)\\[4mm]
&=\left(\tilde B_{u1},-\tilde B_{u1}H\right)+(\tilde B_{p1},\tilde B_{p2})\left(\begin{array}{cc}
                              Q^+ & 0\\
                                0 & Q^-
                             \end{array}
                        \right)\\[4mm]
&=\left(\tilde B_{u1}+\tilde B_{p1}Q^+,-\tilde B_{u1}H+\tilde B_{p2}Q^-\right)
\end{align*}
and
\begin{align*}
&\text{det}\{\tilde B\tilde R_{M}^{S}(\eta,\xi_0)\}\\[4mm]
=&\text{det}\{(\tilde B_{u1},\tilde B_{p2})\}Q^-+\text{det}\{(\tilde B_{p1},-\tilde B_{u1}H+\tilde B_{p2}Q^-)\} Q^+.
\end{align*}
Considering that $Q^+$ may vanish, it is necessary for the GKC to be true that $(\tilde B_{u1},\tilde B_{p2})$ is invertible. Thus, it is easy to see the following conclusion.
\begin{prop}
For $n=2,l=1$, the GKC holds for the constructed boundary matrix $ B=(B_u,B_p)$ with $\tilde B=(B_uT,B_pT)
\equiv(\tilde B_{u1},\tilde B_{u2},\tilde B_{p1},\tilde B_{p2})$
 satisfying
 $
\tilde B_{u2}=-H\tilde B_{u1}
$, 
$(\tilde B_{u1},\tilde B_{p2})$ invertible and $\tilde B_{p1}$ close to zero.
\end{prop}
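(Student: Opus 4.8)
The plan is to reduce the GKC \eqref{4.5} to a uniform lower bound on the numerator and then exploit the determinant decomposition already exhibited above. By Lemma~\ref{lemma 4.1} each $|q_j|\leq(\sqrt 2+1)\sqrt{a_j}$, so the denominator $\sqrt{\text{det}\{\tilde R_M^{S*}\tilde R_M^S\}}=\sqrt{\prod_{j=1}^2(1+|q_j|^2)}$ is bounded above uniformly in $(\eta,\xi_0)$. Hence it suffices to produce a positive constant $c_K$ with $|\text{det}\{\tilde B\tilde R_M^S(\eta,\xi_0)\}|\geq c_K$ for all $\eta\geq 0$ and $\text{Re}\,\xi_0>0$.

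First I would take $\tilde C=0$, which via \eqref{5.3} forces $\tilde B_{u2}=-H\tilde B_{u1}$ and produces the identity
\[
\text{det}\{\tilde B\tilde R_M^S\}=\text{det}\{(\tilde B_{u1},\tilde B_{p2})\}\,Q^-+\text{det}\{(\tilde B_{p1},-\tilde B_{u1}H+\tilde B_{p2}Q^-)\}\,Q^+
\]
derived above, and then estimate the two terms separately. For the first (the \emph{main} term), note that it does not involve $Q^+$: since $(\tilde B_{u1},\tilde B_{p2})$ is assumed invertible we have $|\text{det}\{(\tilde B_{u1},\tilde B_{p2})\}|=:d_0>0$, and since the index $2>l=1$, the strict sub-characteristic part of Lemma~\ref{lemma 4.1} gives $|Q^-|=|q_2|\geq c>0$. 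Thus this term has magnitude $\geq d_0c$ uniformly in $(\eta,\xi_0)$. For the second (the \emph{perturbation} term), the upper bounds of Lemma~\ref{lemma 4.1} give $|Q^+|\leq(\sqrt 2+1)\sqrt{a_1}$ and $\|-\tilde B_{u1}H+\tilde B_{p2}Q^-\|\leq C$ for a constant $C$ independent of the parameters (using $|Q^-|\leq(\sqrt 2+1)\sqrt{a_2}$); together with $|\text{det}\{(\tilde B_{p1},v)\}|\leq\|\tilde B_{p1}\|\,\|v\|$ this shows the second term is bounded by $C'\|\tilde B_{p1}\|$ with $C'$ independent of $(\eta,\xi_0)$.

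Combining by the triangle inequality yields
\[
|\text{det}\{\tilde B\tilde R_M^S\}|\geq d_0c-C'\|\tilde B_{p1}\|,
\]
so choosing $\tilde B_{p1}$ with $\|\tilde B_{p1}\|<d_0c/(2C')$ — the precise meaning of ``$\tilde B_{p1}$ close to zero'' — gives the uniform lower bound $d_0c/2$, and the GKC follows. It remains only to record that $B$ is full-rank: since $\tilde B=(\tilde B_{u1},-H\tilde B_{u1},\tilde B_{p1},\tilde B_{p2})$ contains the invertible block $(\tilde B_{u1},\tilde B_{p2})$, it has rank $2=n$.

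The main obstacle is making every bound uniform in $(\eta,\xi_0)$. The key difficulty, and the reason the construction must force $(\tilde B_{u1},\tilde B_{p2})$ invertible and $\tilde B_{p1}$ small, is that $Q^+=q_1$ degenerates to zero on the boundary of the parameter domain (for $j=1\leq l$ one has $\lambda_1>0$, so $h_1(0)=\lambda_1$ and $q_1$ can vanish as $\text{Re}\,\xi_0\to 0$). Consequently the term carrying the free column $\tilde B_{p1}$ cannot be relied upon to stay bounded below, and the whole argument must lean on the non-degenerate term $\text{det}\{(\tilde B_{u1},\tilde B_{p2})\}Q^-$; this is exactly why the lower bound $|Q^-|\geq c$ furnished by the strict sub-characteristic condition is indispensable.
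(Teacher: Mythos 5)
Your proposal is correct and follows essentially the same route as the paper: the paper establishes the same determinant identity, observes that $Q^+$ may vanish (forcing the lower bound to come from the term $\text{det}\{(\tilde B_{u1},\tilde B_{p2})\}Q^-$, with $|Q^-|\geq c$ from Lemma~\ref{lemma 4.1} and invertibility of $(\tilde B_{u1},\tilde B_{p2})$), and then leaves the rest as "easy to see." Your write-up simply fills in the quantitative details (the triangle inequality, the explicit smallness threshold for $\tilde B_{p1}$, and the full-rank check) that the paper omits.
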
\vspace{1ex}

If $\tilde C\neq 0,$ the constraint \eqref{5.3} becomes 
\begin{equation}\label{5.4}
\tilde B_{p2}=-\tilde B_{u1}\frac{H}{\tilde C}+\tilde B_{u2}\frac{\tilde C-\lambda_2}{\lambda_2\tilde C}.
\end{equation}
Then we have
\begin{align*}
\tilde B\tilde R_{M}^{S}(\eta,\xi_0)
&= \tilde B_u+\tilde B_p\left(\begin{array}{cc}
                              Q^+ & 0\\
                                0 & Q^-
                             \end{array}
                        \right)\\[4mm]
&=\left(\tilde B_{u1},\tilde B_{u2}\right)+\left(\tilde B_{p1},-\tilde B_{u1}\frac{H}{\tilde C}+\tilde B_{u2}\frac{\tilde C-\lambda_2}{\lambda_2\tilde C}\right)\left(\begin{array}{cc}
                              Q^+ & 0\\
                                0 & Q^-
                             \end{array}
                        \right)\\[4mm]
&=\left(\tilde B_{u1}+\tilde B_{p1}Q^+,-\tilde B_{u1}\frac{H}{\tilde C}Q^-+\tilde B_{u2}\left(1+\frac{\tilde C-\lambda_2}{\lambda_2\tilde C}Q^-\right)\right)
\end{align*}
and
\begin{align*}
&\text{det}\{\tilde B\tilde R_{M}^{S}(\eta,\xi_0)\}\\[4mm]
&=\text{det}\{(\tilde B_{u1},\tilde B_{u2})\}\left(1+\frac{\tilde C-\lambda_2}{\lambda_2\tilde C}Q^-\right)+\text{det}\{\left(\tilde B_{p1},-\tilde B_{u1}\frac{H}{\tilde C}Q^-+\tilde B_{u2}\left(1+\frac{\tilde C-\lambda_2}{\lambda_2\tilde C}Q^-\right)\right)\} Q^+.
\end{align*}
Considering that $Q^+$ may vanish, it is necessary for the GKC to be true that $(\tilde B_{u1},\tilde B_{u2})=B_uT$ is invertible. Having this, it is not difficult to see the following conclusion.
\begin{prop}
For $n=2,l=1$, the GKC holds for the constructed boundary matrix $ B=(B_u,B_p)$ satisfying that $B_{u}$ is invertible, the first column of $B_pT$ is close to zero, and the second column is given in \eqref{5.4} with
$$
\tilde C\in\left(\lambda_2-\frac{\lambda_2^2}{\sqrt a_2},0\right)\cup(0,+\infty).
$$
\end{prop}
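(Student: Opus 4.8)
The plan is to start from the determinant formula established just before the statement,
\[
\text{det}\{\tilde B\tilde R_M^S(\eta,\xi_0)\}=D_{uu}\Big(1+\tfrac{\tilde C-\lambda_2}{\lambda_2\tilde C}Q^-\Big)+G(\eta,\xi_0)\,Q^+,
\]
where $D_{uu}=\text{det}\{(\tilde B_{u1},\tilde B_{u2})\}=\text{det}(B_uT)$ and $G=\text{det}\{(\tilde B_{p1},\,w)\}$ with $w=-\tilde B_{u1}\frac{H}{\tilde C}Q^-+\tilde B_{u2}(1+\frac{\tilde C-\lambda_2}{\lambda_2\tilde C}Q^-)$. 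Since $B_u$ and $T$ are invertible, $D_{uu}\neq0$, and by Lemma~\ref{lemma 4.1} the quantities $Q^+=q_1$, $Q^-=q_2$ obey $|Q^\pm|\le(\sqrt2+1)\sqrt{a_{1,2}}$, so $w$ and $G$ are uniformly bounded. As in Section~5, I would bound the first term below away from zero and then absorb the second term by taking the free column $\tilde B_{p1}$ small.

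The main step is to show $|1+\gamma Q^-|\ge c'>0$ uniformly, where $\gamma:=\frac{\tilde C-\lambda_2}{\lambda_2\tilde C}$. If $\tilde C=\lambda_2$ then $\gamma=0$ and $|1+\gamma Q^-|=1$, so assume $\tilde C\neq\lambda_2$. Since $Q^-=h_2(\xi_0)-\lambda_2$, the vanishing of $1+\gamma Q^-$ is equivalent to $h_2(\xi_0)=\mu$ with $\mu:=\frac{\lambda_2^2}{\lambda_2-\tilde C}$, a \emph{real} number. I would reuse the conformal-mapping analysis from the proof of Lemma~\ref{lemma 4.1}: under the strict sub-characteristic condition $a_2>\lambda_2^2$, $h_2$ maps $\{\text{Re}\,\xi_0>0\}$ univalently onto a bounded simply connected domain $\Omega$ whose boundary meets the real axis only at $h_2(0)=0$ and $h_2(\pm\infty)=\sqrt{a_2}$, so that $\overline\Omega\cap\mathbb R=[0,\sqrt{a_2}]$. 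A short computation converts the admissible range $\tilde C\in(\lambda_2-\frac{\lambda_2^2}{\sqrt{a_2}},0)\cup(0,+\infty)$ into $\mu\notin[0,\sqrt{a_2}]$; in particular the endpoint $\tilde C=\lambda_2-\frac{\lambda_2^2}{\sqrt{a_2}}$ yields exactly $\mu=\sqrt{a_2}$, while $\tilde C\to0$ and $\tilde C\to+\infty$ push $\mu$ to negative values. Hence $\mu\notin\overline\Omega$, the range of $Q^-$ is the compact set $\overline\Omega-\lambda_2$, and $|1+\gamma Q^-|=|\gamma|\,|h_2-\mu|\ge|\gamma|\,\text{dist}(\mu,\overline\Omega)=:c'>0$.

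For the perturbation step, observe that $G=\text{det}\{(\tilde B_{p1},w)\}$ is linear in the free column $\tilde B_{p1}=B_pTe_1$, with $w$ independent of $\tilde B_{p1}$ and uniformly bounded; thus $|G|\le C|\tilde B_{p1}|$. Combining with $|Q^+|\le(\sqrt2+1)\sqrt{a_1}$ gives
\[
\big|\text{det}\{\tilde B\tilde R_M^S(\eta,\xi_0)\}\big|\ge|D_{uu}|\,c'-|G|\,|Q^+|\ge|D_{uu}|\,c'-C(\sqrt2+1)\sqrt{a_1}\,|\tilde B_{p1}|,
\]
which is positive and uniform once $\tilde B_{p1}$ is close enough to zero. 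Since the denominator in \eqref{4.5} is uniformly bounded above by Lemma~\ref{lemma 4.1}, this yields the GKC \eqref{4.5}.

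The hard part will be the first step, namely rerunning the conformal-mapping argument of Lemma~\ref{lemma 4.1} with the shifted target value $\mu$ in place of $\lambda_2$ and verifying algebraically that the condition on $\tilde C$ is precisely $\mu\notin[0,\sqrt{a_2}]$; one must in particular confirm that the only real points of $\overline\Omega$ lie in $[0,\sqrt{a_2}]$, so that a real $\mu$ avoiding that interval avoids all of $\overline\Omega$. The remaining triangle-inequality/perturbation argument is routine.
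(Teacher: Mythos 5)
Your proposal is correct and follows essentially the same route as the paper: the paper derives exactly your determinant identity $\det\{\tilde B\tilde R_M^S\}=\det(B_uT)\bigl(1+\tfrac{\tilde C-\lambda_2}{\lambda_2\tilde C}Q^-\bigr)+G\,Q^+$, requires $B_uT$ invertible so the first term's coefficient is nonzero, and then asserts the conclusion ``is not difficult to see,'' with the lower bound on the first term coming from the same conformal-mapping range analysis in the proof of Lemma~\ref{lemma 4.1} (as spelled out in the analogous $n=1$, $F<0$ case) and the second term absorbed by taking $\tilde B_{p1}$ close to zero. Your write-up simply supplies the details the paper omits, including the correct translation of the $\tilde C$-range into $\mu=\frac{\lambda_2^2}{\lambda_2-\tilde C}\notin[0,\sqrt{a_2}]=\overline\Omega\cap\mathbb R$.
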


%%%%%%%%%%%%%%%%%%%%%%%%%%%%%%%%%%%%%%%%%%%%%%%%%%%%%%%%%%%%%%%%%%%%%%%%%%%%%%%%%%%%%%%%%%%%%%%%%%%%
%%%%%%%%%%%%%%%%%%%%%%%%%%%%%%%%%    l<n     %%%%%%%%%%%%%%%%%%%%%%%%%%%%%%%%%%%%%%%%%%%%%%%%%%%%%%%

\subsection{$l<n$}

In this general case, the constraint \eqref{5.1} depends on the free $(n-l)\times (n-l)$-matrix $\tilde{C}$ and we will verify the GKC only with $\tilde{C}=0$ or $\tilde{C}=\Lambda_-.$ 

\subsubsection {$\tilde C=0$}
In this case, the constraint \eqref{5.1} becomes
\begin{equation*}
\tilde B_{u}\left(\begin{array}{cccc}
H\\I_{n-l}
\end{array}\right)=0
\end{equation*}
with $H=-(\hat BR_1^U)^{-1}\hat BR_1^S.$  This implies that the rank of $ B_u$ is not larger than $l.$
When $l=0$, we have $\tilde B_u=0$. Then it must be that $\tilde B_p=I_n$ in order that the boundary matrix $\tilde B=(\tilde B_u,\tilde B_p)$ is full-rank. Thus it follows from Lemma~\ref{lemma 4.1} that
$$
|\text{det}\{\tilde B\tilde R_{M}^{S}(\eta,\xi_0)\}|=|\text{det}\{\tilde B_u+\tilde B_pQ\}|=|\text{det}\{Q\}|\geq c^n>0.
$$
Therefore the GKC holds. \vspace{1ex}

For $l>0$, the constraint is $\tilde B_{u2}=-\tilde B_{u1}H$ with $(\tilde B_{u1},\tilde B_{u2})=B_uT=B_u(R_1^U,R_1^S)$. Notice that $\tilde B_{u1}$ is an $n\times l$-matrix. By a linear transformation, we may as well assume that $\tilde B_{u1}$ has the form
\begin{equation*}
\tilde B_{u1}=\left(\begin{array}{c}
\tilde B_{u11}\\
0
\end{array}\right)
\end{equation*}
with $\tilde B_{u11}$ an $l\times l$-matrix. Here the main result is 
\begin{prop}\label{prop 5.4}
The GKC holds for
\begin{equation*}
\tilde B=(B_uT,B_pT)=\left(
\begin{array}{cccc}
\tilde B_{u11} &  -\tilde B_{u11}H   &\quad        \tilde B_{p11}                &\quad           \star        \\
      0        &           0         &\quad          0            &\quad        \tilde B_{p22}
\end{array}
\right)
\end{equation*}
with both $\tilde B_{u11}$ and $(n-l)\times (n-l)$-matrix $\tilde B_{p22}$ invertible, and the spectral radius  
$\rho(\tilde B_{u11}^{-1}\tilde B_{p11})<\frac{1}{\max\limits_{j\leq l}(\sqrt 2+1)\sqrt a_j}.$ Here $\star$ stands for an arbitrary $l\times (n-l)$-matrix.
\end{prop}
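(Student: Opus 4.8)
The plan is to verify the GKC by showing that $|\det\{\tilde B\tilde R_M^S(\eta,\xi_0)\}|$ has a positive lower bound uniformly in $\eta\ge 0$ and $\mathrm{Re}\,\xi_0>0$, exactly as the section's opening paragraph reduces the problem (the denominator in \eqref{4.5} is already bounded above by Lemma~\ref{lemma 4.1}). With the block structure of $\tilde B$ given in the proposition and the block-diagonal form $Q=\mathrm{diag}(Q^+,Q^-)$ inherited from \eqref{RMS} (where $Q^+$ collects the $q_j$ with $j\le l$ and $Q^-$ those with $j>l$), I would first write out
\begin{equation*}
\tilde B\tilde R_M^S=\tilde B_u+\tilde B_p\left(\begin{array}{cc}Q^+ & 0\\ 0 & Q^-\end{array}\right)
=\left(\begin{array}{cc}\tilde B_{u11}+\tilde B_{p11}Q^+ & -\tilde B_{u11}H+\star\,Q^-\\ 0 & \tilde B_{p22}Q^-\end{array}\right),
\end{equation*}
where the lower-left block vanishes because the corresponding rows of $\tilde B_u$ and of the first block-column of $\tilde B_p$ are zero. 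The block-triangular form is the crucial structural gain: it factors the determinant.

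\textbf{The main computation.} From the block-triangular shape the determinant factors as
\begin{equation*}
\det\{\tilde B\tilde R_M^S\}=\det\{\tilde B_{u11}+\tilde B_{p11}Q^+\}\cdot\det\{\tilde B_{p22}Q^-\}.
\end{equation*}
I would bound the two factors separately. For the second factor, $\tilde B_{p22}$ is invertible by hypothesis and $Q^-=\mathrm{diag}(q_{l+1},\dots,q_n)$ has entries bounded below in modulus by the constant $c$ from the second part of Lemma~\ref{lemma 4.1} (valid under the strict sub-characteristic condition for $j>l$); hence $|\det\{\tilde B_{p22}Q^-\}|\ge |\det \tilde B_{p22}|\,c^{\,n-l}>0$. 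For the first factor I would write $\tilde B_{u11}+\tilde B_{p11}Q^+=\tilde B_{u11}\bigl(I_l+\tilde B_{u11}^{-1}\tilde B_{p11}Q^+\bigr)$, using that $\tilde B_{u11}$ is invertible, so that
\begin{equation*}
|\det\{\tilde B_{u11}+\tilde B_{p11}Q^+\}|=|\det \tilde B_{u11}|\cdot|\det\{I_l+\tilde B_{u11}^{-1}\tilde B_{p11}Q^+\}|.
\end{equation*}
The point is to keep $I_l+\tilde B_{u11}^{-1}\tilde B_{p11}Q^+$ away from singularity. Since every eigenvalue $q_j$ ($j\le l$) of $Q^+$ satisfies $|q_j|\le(\sqrt2+1)\sqrt{a_j}$ by the first (uniform) bound in Lemma~\ref{lemma 4.1}, the operator norm of $\tilde B_{u11}^{-1}\tilde B_{p11}Q^+$ is controlled; the spectral-radius hypothesis $\rho(\tilde B_{u11}^{-1}\tilde B_{p11})<\bigl(\max_{j\le l}(\sqrt2+1)\sqrt{a_j}\bigr)^{-1}$ is precisely what forces the spectral radius of the product below $1$, so $I_l+\tilde B_{u11}^{-1}\tilde B_{p11}Q^+$ has no eigenvalue equal to zero and its determinant is bounded away from $0$.

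\textbf{The anticipated obstacle.} The delicate point is the last step: $\rho(\tilde B_{u11}^{-1}\tilde B_{p11})<(\max_{j\le l}(\sqrt2+1)\sqrt{a_j})^{-1}$ controls the spectral radius of the \emph{fixed} matrix $\tilde B_{u11}^{-1}\tilde B_{p11}$, but one must bound the spectral radius, or better the determinant, of the \emph{product} $\tilde B_{u11}^{-1}\tilde B_{p11}Q^+$ in which $Q^+$ varies with $(\eta,\xi_0)$ and need not commute with $\tilde B_{u11}^{-1}\tilde B_{p11}$. The clean way is to pass to a submultiplicative operator norm: choosing a norm in which $\|\tilde B_{u11}^{-1}\tilde B_{p11}\|$ is as close to its spectral radius as desired (possible by the standard lemma that $\rho(M)=\inf_{\|\cdot\|}\|M\|$), and using $\|Q^+\|\le\max_{j\le l}|q_j|\le\max_{j\le l}(\sqrt2+1)\sqrt{a_j}$ from Lemma~\ref{lemma 4.1}, I would obtain $\|\tilde B_{u11}^{-1}\tilde B_{p11}Q^+\|<1$ uniformly, whence $I_l+\tilde B_{u11}^{-1}\tilde B_{p11}Q^+$ is invertible with a determinant bounded below uniformly (for instance via $|\det(I+N)|\ge(1-\|N\|)^l$ when $\|N\|<1$). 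Combining the uniform lower bounds on both factors yields a positive constant $c_K$ and completes the verification of \eqref{4.5}.
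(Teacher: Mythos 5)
Your construction follows the paper's own proof essentially line for line up to the decisive step: the same block-triangular form of $\tilde B\tilde R_{M}^{S}(\eta,\xi_0)$, the same factorization
\begin{equation*}
\det\{\tilde B\tilde R_{M}^{S}\}=\det\{\tilde B_{u11}\}\,\det\{I_l+\tilde B_{u11}^{-1}\tilde B_{p11}Q^+\}\,\det\{\tilde B_{p22}\}\prod_{j=l+1}^{n}q_j,
\end{equation*}
and the same use of the two halves of Lemma~\ref{lemma 4.1}. The difference is that the paper merely \emph{asserts} that $|\det\{I_l+\tilde B_{u11}^{-1}\tilde B_{p11}Q^+\}|$ has a uniform positive lower bound ``under the given condition,'' whereas you try to prove it --- and your argument for exactly this step has a genuine gap. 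The lemma $\rho(M)=\inf_{\|\cdot\|}\|M\|$ gives, for each $\varepsilon>0$, an induced norm of the form $\|x\|=\|(SD)^{-1}x\|_2$ (with $S$ the generally non-diagonal matrix bringing $N:=\tilde B_{u11}^{-1}\tilde B_{p11}$ to Jordan form and $D$ a scaling) in which $\|N\|\le\rho(N)+\varepsilon$. But in that same norm the inequality $\|Q^+\|\le\max_{j\le l}|q_j|$ is false in general: there $\|Q^+\|=\|(SD)^{-1}Q^+(SD)\|_2$, and conjugating a diagonal matrix by a non-diagonal $S$ can inflate its norm without bound. Operator norms satisfying $\|\mathrm{diag}(q_1,\dots,q_l)\|=\max_j|q_j|$ are precisely those induced by absolute (monotone) vector norms, and over that class the infimum of $\|N\|$ is in general strictly larger than $\rho(N)$; the two properties you demand of the norm cannot be had simultaneously, so the chain $\|NQ^+\|\le\|N\|\,\|Q^+\|<1$ does not go through.

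The gap is not merely technical, because a spectral-radius hypothesis on $N$ alone cannot control $\det(I_l+NQ^+)$ when $Q^+$ does not commute with $N$. Take $l=2$, $a_1=a_2=a$, $\lambda_1>\lambda_2>0$ with $a>\lambda_1^2$, and $N=K\left(\begin{smallmatrix}1&1\\-1&-1\end{smallmatrix}\right)$: this $N$ is nilpotent, so $\rho(N)=0$ and the proposition's hypothesis holds for every $K>0$, yet a direct computation gives $\det\{I_2+NQ^+\}=1+K(q_1-q_2)$. For real $\xi_0>0$ and fixed $\eta>0$ the quantities $q_1,q_2$ are real, tend to $0$ as $\xi_0\to0^+$ and to $\sqrt{a}-\lambda_j$ as $\xi_0\to+\infty$, so by the intermediate value theorem $q_1-q_2$ attains every value in $(\lambda_2-\lambda_1,0)$; choosing $K>(\lambda_1-\lambda_2)^{-1}$ makes the determinant vanish at an admissible $(\eta,\xi_0)$, and the GKC fails. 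In other words, you have correctly isolated the delicate point that the paper's proof leaves unargued, but your patch does not close it --- and under the literal hypothesis it cannot be closed. The argument does work if the spectral-radius condition is replaced by a bound in an absolute induced norm, e.g.\ $\|\tilde B_{u11}^{-1}\tilde B_{p11}\|_2<\bigl(\max_{j\le l}(\sqrt 2+1)\sqrt{a_j}\bigr)^{-1}$ (in particular, if $\tilde B_{u11}^{-1}\tilde B_{p11}$ is normal, its $2$-norm equals its spectral radius): then $\|NQ^+\|_2\le\|N\|_2\max_{j\le l}|q_j|<1$ uniformly in $(\eta,\xi_0)$, and your estimate $|\det(I_l+NQ^+)|\ge(1-\|NQ^+\|_2)^l$ completes the verification of \eqref{4.5}.
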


\begin{proof}
Set 
$$
Q^{+}=\text{diag}(q_1,\cdots,q_l),\qquad Q^{-}=\text{diag}(q_{l+1},\cdots,q_n).
$$ 
Then we have
\begin{eqnarray*}
\tilde B\tilde R_{M}^{S}(\eta,\xi_0)&=& \tilde B_u+\tilde B_p\left(\begin{array}{cc}
Q^+ & 0\\
0 & Q^-
\end{array}
\right)\\[4mm]
&=&\left(\begin{array}{cc}
\tilde B_{u11} & -\tilde B_{u11}H\\
0 & 0
\end{array}\right)+\left(\begin{array}{cccc}
\tilde B_{p11} & \star\\
 0 & \tilde B_{p22}
\end{array}\right)\left(\begin{array}{cccc}
Q^+ & 0\\
0 & Q^-
\end{array}\right)\\[4mm]
&=&\left(\begin{array}{cc}
\tilde B_{u11}+\tilde B_{p11}Q^+   &    -\tilde B_{u11}H+\star Q^{-}\\
0                                  &        \tilde B_{p22}Q^-
\end{array}\right)
\end{eqnarray*}
and
\begin{equation*}
\text{det}\{\tilde B\tilde R_{M}^{S}(\eta,\xi_0)\}=\left|\tilde B_{u11}\right|\left|I_l+\tilde B_{u11}^{-1}\tilde B_{p11}Q^+\right|\left|\tilde B_{p22}\right|\prod_{j=l+1}^{n}q_j.
\end{equation*}
Since $\left|\tilde B_{u11}\right|\left|\tilde B_{p22}\right|\neq 0$ is independent of the parameters $\eta$ and $\xi_0$, $|q_j(\eta,\xi_0)|~(j>l)$ has a uniform positive lower bound due to Lemma \ref{lemma 4.1}, and $\left|I_l+\tilde B_{u11}^{-1}\tilde B_{p11}Q^+\right|$ has a uniform positive lower bound under the given condition, $|\text{det}\{\tilde B\tilde R_{M}^{S}(\eta,\xi_0)\}|$ has a positive lower bound.  Hence the proof is completed.

\end{proof}

\begin{remark}
For $l=1,$ it is not difficult to see from the above proof that the spectral radius condition can be relaxed as $\tilde B_{u11}^{-1}\tilde B_{p11}>\frac{1}{\lambda_1-\sqrt a_1}.$
\end{remark}

\subsubsection{$\tilde C=\Lambda_-$}
In this case, the constraint \eqref{5.1} becomes
\begin{equation*}
\tilde B_{p}\left(\begin{array}{cccc}
0\\
\Lambda_-
\end{array}\right)+\tilde B_{u}\left(\begin{array}{cccc}
H\\
0
\end{array}\right)=0.
\end{equation*}
When $l=0$, it follows from this constraint that $\tilde B_p=0$. Then we must have $\tilde B_u=I_n$ in order that the boundary matrix $\tilde B=(\tilde B_u,\tilde B_p)$ is full-rank. Thus the determinant 
$$
\text{det}\{\tilde B\tilde R_{M}^{S}(\eta,\xi_0)\}=|\tilde B_u+\tilde B_pQ|=1
$$
and thereby the GKC holds. \vspace{1ex}

For $l>0$,  we have the following result.

\begin{prop}
The GKC holds for
\begin{equation*}
\tilde B=(B_uT,B_pT)=\left(\begin{array}{cccc}
\tilde B_{u11} &            \star    &\quad  \tilde B_{p11}    &\quad     -\tilde B_{u11} H\Lambda_-^{-1}\\
        0      &    \tilde B_{u22}   &\quad         0          &\quad                    0
\end{array}\right)
\end{equation*}
with both $\tilde B_{u11}$ and $\tilde B_{u22}$ invertible, and the spectral radius  
$\rho(\tilde B_{u11}^{-1}\tilde B_{p11})<\frac{1}{\max\limits_{j\leq l}(\sqrt 2+1)\sqrt a_j}.$ Here $\star$ stands for an arbitrary $l\times (n-l) $-matrix.

\end{prop}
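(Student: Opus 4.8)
The plan is to follow the strategy of the proof of Proposition~\ref{prop 5.4}. By the reformulation \eqref{4.5} of the GKC, together with the upper bound $|q_j|\le(\sqrt2+1)\sqrt{a_j}$ from Lemma~\ref{lemma 4.1}, the denominator $\sqrt{\text{det}\{\tilde R_M^{S*}\tilde R_M^S\}}=\sqrt{\prod_{j=1}^n(1+|q_j|^2)}$ is bounded above uniformly in $\eta\ge0$ and $\text{Re}\,\xi_0>0$. Hence it suffices to produce a uniform positive lower bound for the numerator $|\text{det}\{\tilde B\tilde R_M^S(\eta,\xi_0)\}|$.

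First I would substitute the given block form of $\tilde B=(\tilde B_u,\tilde B_p)$ into $\tilde B\tilde R_M^S=\tilde B_u+\tilde B_pQ$, writing $Q=\text{diag}(Q^+,Q^-)$ with $Q^+=\text{diag}(q_1,\dots,q_l)$ and $Q^-=\text{diag}(q_{l+1},\dots,q_n)$ exactly as in Proposition~\ref{prop 5.4}. A direct block multiplication gives the block upper-triangular matrix
\[
\tilde B\tilde R_M^S(\eta,\xi_0)=\begin{pmatrix} \tilde B_{u11}+\tilde B_{p11}Q^+ & \star-\tilde B_{u11}H\Lambda_-^{-1}Q^- \\ 0 & \tilde B_{u22}\end{pmatrix},
\]
whose lower-left block vanishes and whose lower-right block is the constant invertible matrix $\tilde B_{u22}$. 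Therefore
\[
\text{det}\{\tilde B\tilde R_M^S\}=\text{det}\{\tilde B_{u11}\}\,\text{det}\{\tilde B_{u22}\}\,\text{det}\{I_l+\tilde B_{u11}^{-1}\tilde B_{p11}Q^+\}.
\]
I would emphasize that, in contrast with Proposition~\ref{prop 5.4}, the stable block $Q^-$ now enters only through the upper-right corner and drops out of the determinant; consequently only the upper bound of Lemma~\ref{lemma 4.1} is needed, while its lower bound (which requires the strict sub-characteristic condition) plays no role here.

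Since $\text{det}\{\tilde B_{u11}\}$ and $\text{det}\{\tilde B_{u22}\}$ are nonzero constants independent of the parameters, the task reduces to a uniform positive lower bound for $|\text{det}\{I_l+PQ^+\}|$ with $P:=\tilde B_{u11}^{-1}\tilde B_{p11}$. Here I would argue as follows. By Lemma~\ref{lemma 4.1} every $q_j$ with $j\le l$ satisfies $|q_j|\le\max_{j\le l}(\sqrt2+1)\sqrt{a_j}=:c_0$, so as $(\eta,\xi_0)$ varies the diagonal matrix $Q^+$ ranges within a fixed bounded, hence precompact, set. The hypothesis $\rho(P)<1/c_0$ is imposed precisely to guarantee that $I_l+PQ^+$ stays nonsingular over this set: if $\text{det}\{I_l+PQ^+\}=0$ then $-1$ is an eigenvalue of $PQ^+$, forcing $\rho(PQ^+)\ge1$. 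Passing to the closure of the admissible set of $Q^+$ and invoking continuity of the determinant on this compact set then upgrades pointwise non-vanishing to the desired uniform lower bound, completing the proof.

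The main obstacle is this last step, namely turning pointwise nonsingularity of $I_l+PQ^+$ into a genuinely uniform bound. The delicate point is that the spectral radius is not submultiplicative, so one cannot bound $\rho(PQ^+)$ directly by $\rho(P)\,\max_{j\le l}|q_j|$; the transparent sufficient route is instead to bound a submultiplicative norm, $\|P\|\,c_0<1$, which yields $\rho(PQ^+)\le\|PQ^+\|<1$ and hence $|\text{det}\{I_l+PQ^+\}|\ge(1-\|P\|c_0)^l>0$. Alternatively one exploits that the entries $q_j$ are not independent but are jointly constrained, as functions of the single pair $(\eta,\xi_0)$, to the ranges identified through the conformal mapping in the proof of Lemma~\ref{lemma 4.1}. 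This is exactly the mechanism already used in Proposition~\ref{prop 5.4}; for $l=1$ it collapses to the scalar non-vanishing of $1+Pq_1$ and is settled directly by that range description.
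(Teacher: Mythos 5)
Your decomposition is exactly the paper's: the same block multiplication yields the block upper-triangular matrix, the same factorization
$\text{det}\{\tilde B\tilde R_{M}^{S}\}=\text{det}\{\tilde B_{u11}\}\,\text{det}\{I_l+\tilde B_{u11}^{-1}\tilde B_{p11}Q^+\}\,\text{det}\{\tilde B_{u22}\}$,
and the same reduction of the GKC to a uniform lower bound on the middle factor. Your observation that $Q^-$ drops out of the determinant, so that only the upper bound of Lemma~\ref{lemma 4.1} is needed and the lower bound (hence the strict sub-characteristic condition) plays no role, is correct and is a genuine sharpening of the paper's one-line appeal to ``the same argument in the proof of Proposition~\ref{prop 5.4}'', since that argument did invoke the lower bound on $|q_j|$ for $j>l$.

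The problem is the last step, and your own diagnosis there is the essential point. The implication ``$\text{det}\{I_l+PQ^+\}=0$ forces $\rho(PQ^+)\ge 1$, contradicting $\rho(P)<1/c_0$'' is a non sequitur: the spectral radius is not submultiplicative, so $\rho(P)<1/c_0$ together with $|q_j|\le c_0$ does not bound $\rho(PQ^+)$. Concretely, $P=\left(\begin{smallmatrix}1&1\\-1&-1\end{smallmatrix}\right)$ is nilpotent, so $\rho(P)=0$ satisfies any spectral-radius hypothesis, yet $\text{det}\{I_2+P\,\text{diag}(q_1,q_2)\}=1+q_1-q_2$, which vanishes when $q_2-q_1=1$, a value compatible with $|q_j|\le c_0$ as soon as $c_0\ge 1/2$ (in that case $PQ^+$ does have eigenvalue $-1$). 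This is a counterexample to the implication you use, not necessarily to the proposition itself, because the admissible $(q_1,\dots,q_l)$ are jointly constrained as functions of the single pair $(\eta,\xi_0)$; but neither you nor the paper analyzes that joint range. What does close the argument is the repair you indicate: replacing the spectral-radius hypothesis by a bound in a submultiplicative (e.g.\ induced) norm, $\|\tilde B_{u11}^{-1}\tilde B_{p11}\|\,c_0<1$, which gives $|\text{det}\{I_l+PQ^+\}|\ge (1-\|P\|c_0)^l>0$ uniformly, though this proves a variant of the statement with a strictly stronger hypothesis (they coincide when $\tilde B_{u11}^{-1}\tilde B_{p11}$ is normal, and for $l=1$). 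You should be aware that the paper is no better off here: its proof of Proposition~\ref{prop 5.4} simply asserts the uniform lower bound on $|I_l+\tilde B_{u11}^{-1}\tilde B_{p11}Q^+|$ ``under the given condition'' without justification, so your scrutiny exposes a gap in the paper's own argument rather than a defect peculiar to yours.
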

\begin{proof}
 Because
\begin{align*}
\tilde B\tilde R_{M}^{S}(\eta,\xi_0)&= \tilde B_u+\tilde B_p\left(\begin{array}{cccc}
Q^+ & 0\\
0 & Q^-
\end{array}\right)\\[3mm]
& =\left(\begin{array}{cccc}
\tilde B_{u11} & \star \\
0 & \tilde B_{u22}
\end{array}
\right)+\left(\begin{array}{cccc}
\tilde B_{p11} & -\tilde B_{u11} H\Lambda_-^{-1}\\
0 & 0
\end{array}\right)\left(\begin{array}{cccc}
Q^+ & 0\\
0   & Q^-
\end{array}\right)\\[3mm]
&=\left(\begin{array}{cccc}
\tilde B_{u11}+\tilde B_{p11}Q^+ & \star-\tilde B_{u11}H\Lambda_-^{-1}Q^-\\
0 & \tilde B_{u22}
\end{array}\right),
\end{align*}
we have
\begin{equation*}
\text{det}\{\tilde B\tilde R_{M}^{S}(\eta,\xi_0)\}=\left|\tilde B_{u11}\right|\left|I_l+\tilde B_{u11}^{-1}\tilde B_{p11}Q^+\right|\left|\tilde B_{u22}\right|.
\end{equation*}
Then the same argument in the proof of Proposition~\ref{prop 5.4} leads to the conclusion. This completes the proof.
\end{proof}

%%%%%%%%%%%%%%%%%%%%%%%%%%%%%%%%%%%%%%%%%%%%%%%%%%%%%%%%%%%%%%%%%%%%%%%%%%%%%%%%%%%%%%%%%%%%%%%%%%%%
%%%%%%%%%%%%%%%%%%%%%%%%%%%%%%%%%%%%%%%%%%%%%%%%%%%%%%%%%%%%%%%%%%%%%%%%%%%%%%%%%%%%%%%%%%%%%%%%%%%%
%%%%%%%%%%%%%%%%%%%%%%%%%%%%%%%%%%%%%%%%%%%%%%%%%%%%%%%%%%%%%%%%%%%%%%%%%%%%%%%%%%%%%%%%%%%%%%%%%%%%
%%%%%%%%%%%%%%%%%%%%%%%%%%%%%%%%%%%%%%%%%%%%%%%%%%%%%%%%%%%%%%%%%%%%%%%%%%%%%%%%%%%%%%%%%%%%%%%%%%%%
%%%%%%%%%%%%%%%%%%%%%%%%%%%%%%%%%%%%%%%%%%%%%%%%%%%%%%%%%%%%%%%%%%%%%%%%%%%%%%%%%%%%%%%%%%%%%%%%%%%%
%%%%%%%%%%%%%%%%%%%%%%%%%%%%%%%%%%%%%%%%%%%%%%%%%%%%%%%%%%%%%%%%%%%%%%%%%%%%%%%%%%%%%%%%%%%%%%%%%%%%
%%%%%%%%%%%%%%%%%%%%%%%%%%%%%%%%%%%%%%%%%%%%%%%%%%%%%%%%%%%%%%%%%%%%%%%%%%%%%%%%%%%%%%%%%%%%%%%%%%%%
%%%%%%%%%%%%%%%%%%%%%%%%%%%%%%%%%%%%%%%%%%%%%%%%%%%%%%%%%%%%%%%%%%%%%%%%%%%%%%%%%%%%%%%%%%%%%%%%%%%%
%%%%%%%%%%%%%%%%%%%%%%%%% compatibility %%%%%%%%%%%%%%%%%%%%%%%%%%%%%%%%%%%%%%%%%%%%%%%%%%%%%%%%%%%%%

\section{Compatibility}

The BCs constructed in Section $3$ do not guarantee that the initial-boundary-value problems~(IBVPs) have smooth solutions. To clarify this point, we introduce further constraints on the boundary data $b_{\epsilon}(t)$ in \eqref{3.1} so that the initial and boundary data are compatible, up to a certain order, at $(x,t)=(0,0)$. In this and next sections, we only consider the case where $l<n$, while the simple case $l=n$ is studied in the appendix.  

To do this, we denote by $\bm u_0=\bm u_0(x)$ the initial value for the conservation laws \eqref{2.1}. 
Assume that this initial value is compatible, up to order $2$, with the boundary data in \eqref{2.2} for the conservation laws, that is, 
\begin{equation}\label{6.1}
\hat B(-F\partial_x)^i\bm {u}_0(0)=\partial_t^i\hat b(0),\quad i=0,1,2.
\end{equation}
Similarly, the compatibility up to order $2$ for the relaxation system \eqref{2.3} reads as
\begin{equation}\label{6.2}
(B_u,B_p)\left(\begin{array}{cc}
\partial_t^i\bm{u}\vspace{1ex}\\
\partial_t^i\bm{p}\\
\end{array}\right)(0,0)=\partial_t^ib_{\epsilon}(0),\quad i=0,1,2,
\end{equation}
where 
$$
b_{\epsilon}(t)=b_0(t)+\epsilon b_1(t)+\epsilon^2 b_2(t).
$$ 
In view of \cite{zhou2020construction}, we refer to \eqref{2.8} and \eqref{2.10} and choose 
\begin{equation}\label{6.3}
\left(\begin{array}{cc}
\bm u \\
\bm p\\
\end{array}
\right)(x,0)=\left(\begin{array}{cc}
\bm u_0 \\
0\\
\end{array}
\right)+\epsilon\left(\begin{array}{cc}
0 \\
-(\bar A-F^2)\bm{u}_{0x}\\
\end{array}\right)+\epsilon^2\left(\begin{array}{cc}
0\\
\bm p_{02}\\
\end{array}\right)
\end{equation}
as initial data for the relaxation system. Here $\bm p_{02}$ is to be determined.\vspace{1ex} 

Next we deduce from the relaxation system~\eqref{2.3} that
\begin{equation*}
\left(\begin{array}{cc}
\partial_t^i\bm{u}\vspace{1ex}\\
\partial_t^i\bm{p}\\
\end{array}\right)\Big|_{t=0}=\left(-\left(\begin{array}{cc}
F & I_n\\
\bar A-F^2 & -F\\
\end{array}\right)\partial_x+\frac{1}{\epsilon}\left(\begin{array}{cc}
0 & 0 \\
0 & -I_n\\
\end{array}\right)\right)^i\left(\begin{array}{cc}
\bm{u}\\\bm{p}\\
\end{array}\right)\Big|_{t=0}.
\end{equation*}
Using \eqref{6.3}, we have
\begin{equation*}
\left(\begin{array}{cc}
\partial_t\bm{u}\vspace{1ex}\\
\partial_t\bm{p}\\
\end{array}\right)\Big|_{t=0}=\left(\begin{array}{cc}
-F\partial_x\bm u_0 \\
0\\
\end{array}\right)+\epsilon m_{11}+\epsilon^2 m_{12}
\end{equation*}
with
\begin{equation*}
m_{11}=\left(\begin{array}{cc}
(\bar A-F^2)\partial_{xx}\bm{u}_0 \vspace{1ex}\\
-F(\bar A-F^2)\partial_{xx}\bm{u}_0-\bm{p}_{02}\\
\end{array}\right),\qquad
 m_{12}=\left(\begin{array}{cc}
-\partial_x\bm {p}_{02}\vspace{1ex} \\
F\partial_x\bm{p}_{02}\\
\end{array}\right)
\end{equation*}
and
\begin{equation}\label{6.4}
\left(\begin{array}{cc}
\partial_t^2\bm{u}\\
\partial_t^2\bm{p}\\
\end{array}\right)\Big|_{t=0}=\left(\begin{array}{cc}
F^2\partial_{xx}\bm u_0\vspace{1ex} \\
\bm{p}_{02}+2F(\bar A-F^2)\partial_{xx}\bm{u}_0\\
\end{array}\right)+\epsilon m_{21}+\epsilon^2 m_{22},
\end{equation}
where
\begin{equation*}
m_{21}=\left(\begin{array}{cc}
\partial_{x}\bm p_{02}\vspace{1ex} \\
-\bar A(\bar A-F^2)\partial_{xxx}\bm{u}_0-2F\partial_{x}\bm{p}_{02}\\
\end{array}\right),\qquad m_{22}=\left(\begin{array}{cc}
0\vspace{1ex}\\
\bar A\partial_{xx}\bm{p}_{02}\\
\end{array}\right).
\end{equation*}
\vspace{1ex}
In view of \eqref{6.4}, we take
\begin{equation}\label{6.5}
\bm{p}_{02}=-2F(\bar A-F^2)\partial_{xx}\bm{u}_0.
\end{equation}
Consequently, the compatibility condition \eqref{6.2} becomes
\begin{equation}\label{6.6}
\partial_t^ib_{0}(0)=(B_u,B_p)\left(\begin{array}{cc}
(-F\partial_x)^i\bm{u}_0(0)\\
0\\
\end{array}\right)=B_u(-F\partial_x)^i\bm{u}_0(0),\quad i=0,1,2
\end{equation}
and
\begin{equation}\label{6.7}
\partial_t^ib_1(0)=(B_u,B_p)m_{i1},\qquad
\partial_t^ib_2(0)=(B_u,B_p)m_{i2},\qquad i=0,1,2,
\end{equation}
where 
$$
m_{01}=\left(\begin{array}{cc}
0\\
-(\bar A-F^2)\bm{u}_{0x}\\
\end{array}\right),\qquad m_{02}=\left(\begin{array}{cc}
0\\
\bm p_{02}\\
\end{array}\right).
$$

Recall \eqref{3.11} where
\begin{equation*}
b_0(t)=B_uR_1^UJ(t)+(B_p-B_uF^{-1})D(t),
\end{equation*}
 while $b_1(t)$ and $b_2(t)$ have not been determined up to now. Our main result of this section is
\begin{theorem}
For $l<n,$ let the boundary matrix $B=(B_u,B_p)$ be given with the constraint \eqref{3.12}  and $b_0(t)$ be given with \eqref{3.11}. Then the compatibility of order $2$  holds if the initial data are chosen according to \eqref{6.3} with \eqref{6.5}, and the boundary data satisfy \eqref{6.7} and
\begin{equation}\label{6.8}
\partial_t^iD(0)=-\big(0,C\big)T^{-1}(-F\partial_x)^i\bm{u}_0(0),\qquad i=0,1,2.
\end{equation}
\end{theorem}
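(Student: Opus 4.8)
The goal is to establish that the second-order compatibility conditions \eqref{6.2} for the relaxation system hold. The plan is to treat each power of $\epsilon$ separately, since $b_\epsilon(t)=b_0(t)+\epsilon b_1(t)+\epsilon^2 b_2(t)$ is a polynomial in $\epsilon$ and the time-derivatives of the data, as computed in \eqref{6.4} and the displays preceding it, are likewise polynomials in $\epsilon$ with coefficients $m_{ij}$. Matching coefficients of $\epsilon^0,\epsilon^1,\epsilon^2$ in \eqref{6.2} separates it into the three families \eqref{6.6} and \eqref{6.7}. The conditions \eqref{6.7} are already the definition of $\partial_t^i b_1(0)$ and $\partial_t^i b_2(0)$, so they hold by construction once we have fixed $b_1,b_2$ to satisfy them; that part is immediate. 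The real content is therefore to verify \eqref{6.6}, namely $\partial_t^i b_0(0)=B_u(-F\partial_x)^i \bm u_0(0)$ for $i=0,1,2$, given that $b_0(t)$ was defined back in \eqref{3.11} as $b_0(t)=B_uR_1^UJ(t)+(B_p-B_uF^{-1})D(t)$, and given the hypothesis \eqref{6.8}.

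\textbf{Verifying the $b_0$ compatibility.} First I would differentiate \eqref{3.11} $i$ times and evaluate at $t=0$, obtaining $\partial_t^i b_0(0)=B_uR_1^U\partial_t^iJ(0)+(B_p-B_uF^{-1})\partial_t^iD(0)$. The idea is to rewrite $B_u(-F\partial_x)^i\bm u_0(0)$ in the $T$-coordinates and show it coincides with this. Recall $J(t)=(\hat BR_1^U)^{-1}\hat b(t)$, so the assumed initial compatibility \eqref{6.1} for the conservation laws gives $\partial_t^i J(0)=(\hat BR_1^U)^{-1}\partial_t^i\hat b(0)=(\hat BR_1^U)^{-1}\hat B(-F\partial_x)^i\bm u_0(0)$. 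Writing $\bm w_i:=(-F\partial_x)^i\bm u_0(0)$ and decomposing $T^{-1}\bm w_i=\bigl(\begin{smallmatrix}(w_i)^+\\(w_i)^-\end{smallmatrix}\bigr)$ as in \eqref{3.5}, one has $\bm w_i=R_1^U(w_i)^+ + R_1^S(w_i)^-$, whence $\hat B\bm w_i=\hat BR_1^U(w_i)^+ + \hat BR_1^S(w_i)^-$. Therefore $\partial_t^i J(0)=(w_i)^+ + (\hat BR_1^U)^{-1}\hat BR_1^S(w_i)^- = (w_i)^+ - H(w_i)^-$, using the definition of $H$ in \eqref{3.6}. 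Substituting and using \eqref{6.8} in the form $\partial_t^iD(0)=-(0,C)T^{-1}\bm w_i=-C(w_i)^-$, the target reduces to the algebraic identity
\begin{equation*}
B_u\bm w_i = B_uR_1^U\bigl((w_i)^+ - H(w_i)^-\bigr) - (B_p-B_uF^{-1})C(w_i)^-.
\end{equation*}

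\textbf{Reducing to the construction constraint.} Expanding $B_u\bm w_i=B_uR_1^U(w_i)^+ + B_uR_1^S(w_i)^-$ and cancelling the common $B_uR_1^U(w_i)^+$ term, the claim becomes $B_uR_1^S(w_i)^- = -B_uR_1^UH(w_i)^- - (B_p-B_uF^{-1})C(w_i)^-$, i.e.
\begin{equation*}
\bigl[B_uR_1^S + B_uR_1^UH + (B_p-B_uF^{-1})C\bigr](w_i)^- = 0.
\end{equation*}
The bracketed matrix is exactly $(B_u,B_p)\bigl(\begin{smallmatrix}R_1^UH+R_1^S-F^{-1}C\\ C\end{smallmatrix}\bigr)$, which vanishes identically by the construction constraint \eqref{3.12}. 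Hence the identity holds for every $(w_i)^-$, and \eqref{6.6} follows. I expect the main obstacle to be purely bookkeeping: carefully tracking the block decompositions relative to the $R_1^U$/$R_1^S$ splitting and keeping the definitions of $H$, $J$, $C$, $D$ consistent with Section 3, so that the expressions collapse precisely onto \eqref{3.12}. Once that alignment is secured, the proof is a direct substitution with no analytic difficulty, since \eqref{6.5} was already chosen in \eqref{6.4} to kill the unwanted $\bm p_{02}$-term in $\partial_t^2\bm p$, and \eqref{6.7} merely defines $b_1,b_2$.
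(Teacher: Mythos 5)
Your proof is correct and follows essentially the same route as the paper's: both reduce the problem to \eqref{6.6} via \eqref{6.7}, differentiate \eqref{3.11}, handle the $D$-term with \eqref{6.8}, and collapse the remaining $\alpha^-$-type terms using the construction constraint \eqref{3.12}. The only (immaterial) difference is that where the paper routes through the traces $\alpha^{\pm}(t)$ of the conservation-law solution and the identity $\partial_t^i\bm u(0,0)=(-F\partial_x)^i\bm u_0(0)$, you invoke the data-level compatibility \eqref{6.1} directly to evaluate $\partial_t^i J(0)$ --- an equivalent step under the section's standing assumptions.
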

\begin{proof}

With \eqref{6.7}, we only need to check \eqref{6.6}. 
To do this, it follows from \eqref{3.11} and \eqref{6.8} that
\begin{align*}
\partial_t^ib_0(0)
&=\partial_t^i(B_uR_1^UJ(t))\big|_{t=0}-(B_p-B_uF^{-1})\big(0,C\big)T^{-1}(-F\partial_x)^i\bm{u}_0(0).
\end{align*}
Furthermore, from \eqref{3.6} and \eqref{3.12} we deduce that
\begin{align*}
\partial_t^ib_0(0)
 =& B_uR_1^U\big[\partial_t^i\alpha^+(0)-H\partial_t^i\alpha^-(0)\big]-(B_p-B_uF^{-1})\big(0,C\big)T^{-1}(-F\partial_x)^i\bm{u}_0(0)\\[2mm]
 =& B_uR_1^U\partial_t^i\alpha^+(0)+\big[B_uR_1^S+(B_p-B_uF^{-1})C\big]\partial_t^i\alpha^-(0)\\[2mm]
&-(B_p-B_uF^{-1})\big(0,C\big)T^{-1}(-F\partial_x)^i\bm{u}_0(0)\\[2mm]
 =& B_u\big[R_1^U\partial_t^i\alpha^+(0)+R_1^S\partial_t^i\alpha^-(0)\big]+(B_p-B_uF^{-1})\Big[C\partial_t^i\alpha^-(0)-\big(0,C\big)T^{-1}(-F\partial_x)^i\bm{u}_0(0)\Big].
\end{align*}
Recall \eqref{3.5}  that 
$$
T\left(\begin{array}{c}
\alpha^+(t)\\
\alpha^-(t)\\
\end{array}
\right)=(R_1^U,R_1^S)\left(\begin{array}{c}
\alpha^+(t)\\
\alpha^-(t)\\
\end{array}
\right)=\bm u(0,t)
$$
 with $\bm u=\bm u(x,t)$ the solution to the conservation laws~\eqref{2.1}. Since $\partial_t^i \bm u(0,t)\big|_{t=0}=(-F\partial_x)^i\bm{u}_0(0)$ due to the conservation laws, we have
\begin{align*}
\partial_t^ib_0(0)
&= B_u\partial_t^i\bm{u}(0,0)+(B_p-B_uF^{-1})\Big[C\partial_t^i\alpha^-(0)-\big(0,C\big)T^{-1}\partial_t^i \bm u(0,t)\big|_{t=0}\Big]\\[2mm]
& = B_u\partial_t^i\bm{u}(0,0)+(B_p-B_uF^{-1})\Big[C\partial_t^i\alpha^-(0)-\big(0,C\big)\left(\begin{array}{c}
\partial_t^i\alpha^+(0)\\
\partial_t^i\alpha^-(0)\\
\end{array}
\right)\Big]\\[2mm]
& = B_u(-F\partial_x)^i\bm{u}_0(0).
\end{align*}
This completes the proof.
\end{proof}
\begin{remark}\label{remark 6.1}
Recall that $\bm{\nu}_0(0,t)=C\alpha^-(t)+D(t)$. Then the condition \eqref{6.8} is equivalent to
$$
\partial_t^i\bm{\nu}_0(0,0)=0,\quad i=0,1,2.
$$
In addition, $\bm \mu_0|_{t=0}=0$ for $\bm\mu_0=-F^{-1}\bm\nu_0$.
\end{remark}

In summary, we have introduced new constraints \eqref{6.7} and \eqref{6.8} on $b_{\epsilon}(t)$  such that the initial and boundary data for the relaxation system \eqref{2.3} are compatible, up to order 2, at $(x,t)=(0,0)$.

%%%%%%%%%%%%%%%%%%%%%%%%%%%%%%%%%%%%%%%%%%%%%%%%%%%%%%%%%%%%%%%%%%%%%%%%%%%%%%%%%%%%%%%%%%%
%%%%%%%%%%%%%%%%%%%%%%%%%%%%%%%%%%%%%%%%%%%%%%%%%%%%%%%%%%%%%%%%%%%%%%%%%%%%%%%%%%%%%%%%%%%
%%%%%%%%%%%%%%%%%%%%%%%%%%%%%%%%%%%%%%%%%%%%%%%%%%%%%%%%%%%%%%%%%%%%%%%%%%%%%%%%%%%%%%%%%%%
%%%%%%%%%%%%%%%%%%%%%%%%%%%%%%%%%%%%%%%%%%%%%%%%%%%%%%%%%%%%%%%%%%%%%%%%%%%%%%%%%%%%%%%%%%%
%%%%%%%%%%%%%%%%%%%%%%%%%%%%%%%%%%%%%%%%%%%%%%%%%%%%%%%%%%%%%%%%%%%%%%%%%%%%%%%%%%%%%%%%%%%
%%%%%%%%%%%%%%%%%%%%%%%%%%%%%%%%%%%%%%%%%%%%%%%%%%%%%%%%%%%%%%%%%%%%%%%%%%%%%%%%%%%%%%%%%%%
%%%%%%%%%%%%%%%%%%%%%%%%%%%%%%%%%%%%%%%%%%%%%%%%%%%%%%%%%%%%%%%%%%%%%%%%%%%%%%%%%%%%%%%%%%%
%%%%%%%%%%% asymptotic expansion coefficient  %%%%%%%%%%%%%%%%%%%%%%%%%%%%%%%%%%%%%%%%%%%%%

\section{Formal Asymptotic Solutions}

In order to show the effectiveness of the initial and boundary conditions constructed before, we seek a formal approximate solution to the resultant IBVP in this section. To this end, we follow Section $2$ to fix the asymptotic expansion coefficients in \eqref{2.4}-\eqref{2.6}. Thanks to \eqref{2.8}, \eqref{2.10}, \eqref{2.16} and \eqref{2.18}, we only need to determine $\bm{\bar u}_0,$ $\bm{\bar u}_1,$ $\bm{\nu}_0$  and $\bm{\nu}_1$, solving equations \eqref{2.9},~\eqref{2.11},~\eqref{2.17} and \eqref{2.19}, respectively. \vspace{1ex}

According to \eqref{6.3}, it is natural to take 
\begin{equation}\label{7.1}
      \bm{\bar u}_0(x,0)=\bm{u}_0(x),\quad \bm{\bar u}_1(x,0)=0.
\end{equation}
In addition, we need BCs for $\bm{\bar u}_0$ and $\bm{\bar u}_1$ at the boundary $x=0,$ and initial data for $\bm{\nu}_0$ and $\bm{\nu}_1.$ This can be obtained from the expectation that the asymptotic solution \eqref{2.4}-\eqref{2.6} satisfies the BC~\eqref{3.1}, that is,
\begin{equation*}
(B_u,B_p)\left(
\begin{array}{cc}
\bm{\bar u}_0(0,t)+\bm{\mu}_0(0,t)\vspace{1ex}\\
\bm{\bar p}_0(0,t)+\bm{\nu}_0(0,t)\\
\end{array}
\right)=b_0(t)
\end{equation*}
and 
\begin{equation*}
(B_u,B_p)\left(
\begin{array}{cc}
\bm{\bar u}_1(0,t)+\bm{\mu}_1(0,t)\vspace{1ex}\\
\bm{\bar p}_1(0,t)+\bm{\nu}_1(0,t)\\
\end{array}
\right)=b_1(t)\\.
\end{equation*}
With \eqref{2.8}, \eqref{2.10}, \eqref{2.16} and \eqref{2.18}, the last two relations become
\begin{equation}\label{7.2}
B_u\bm{\bar u}_0(0,t)+(B_p-B_uF^{-1})\bm{\nu}_0(0,t)=b_0(t)
\end{equation}
and
\begin{equation}\label{7.3}
B_u\bm{\bar u}_1(0,t)+(B_p-B_uF^{-1})\bm{\nu}_1(0,t)=b_1(t)+B_p(\bar A-F^2)\bm{\bar u}_{0x}(0,t)-B_uF^{-1}\int_0^{\infty}\bm{\mu}_{0t}(s,t)ds.
\end{equation}

For $l<n$, we refer to Theorem 3.2 in~\cite{yong1999boundary}. If the constructed BC satisfies the GKC, then there exists a full-rank $l\times n$-matrix $\hat B_1$ such that $l\times l$-matrix $\hat B_1B_uR_1^U$ is invertible and $\hat B_1(B_p-B_uF^{-1})R_1^S=0$ with $(R_1^U,R_1^S)=T$. On the other hand, from  \eqref{3.8} we know that $\bm\nu_0(0,t)=R_1^S\alpha$ with $\alpha$ an $(n-l)$-vector. Then we multiply \eqref{7.2} with $\hat B_1$ from right to obtain
\begin{equation}\label{7.4}
\hat B_1B_u\bm{\bar u}_0(0,t)=\hat B_1b_0(t).
\end{equation}
This will be shown to be just the given BC \eqref{2.2}. Consequently, $\bm{\bar u}_0$  can be obtained by solving the given IBVP of the conservation laws \eqref{2.1}.\vspace{1ex} 

To see the equivalence of \eqref{7.4} and \eqref{2.2}, we recall the constraint in \eqref{3.13}:
\begin{equation*}
B_uR_1^UH+B_uR_1^S+(B_p-B_uF^{-1})R_1^S\tilde C=0
\end{equation*}
with $H=-(\hat BR_1^U)^{-1}(\hat BR_1^S)$. Multiplying this with $\hat B_1$ from right we obtain
\begin{align*}
\hat B_1B_u\left(I-R_1^U(\hat BR_1^U)^{-1}\hat B\right)R_1^S=-\hat B_1B_uR_1^U(\hat BR_1^U)^{-1}\hat BR_1^S+\hat B_1B_uR_1^S=0.
\end{align*}
Thus we have 
\begin{align*}
\hat B_1B_uT&=\hat B_1B_u(R_1^U,R_1^S)\\[2mm]
&=(\hat B_1B_uR_1^U(\hat BR_1^U)^{-1}\hat BR_1^U,\hat B_1B_uR_1^U(\hat BR_1^U)^{-1}\hat BR_1^S)\\[2mm]
&=\hat B_1B_uR_1^U(\hat BR_1^U)^{-1}\hat BT
\end{align*}
and thereby
\begin{align}\label{7.5}
\hat B_1B_u=\hat B_1B_uR_1^U(\hat BR_1^U)^{-1}\hat B.
\end{align}
In addition, recall the constraint in \eqref{3.11}:
\begin{equation*}
b_0(t)=B_uR_1^UJ(t)+(B_p-B_uF^{-1})D(t)
\end{equation*}
with $J(t)=(\hat BR_1^U)^{-1}\hat b(t)$ and $D(t)\in \text{span}\{R_1^S\}$ due to \eqref{3.10}.
Then we multiply it with $\hat B_1$ from right to obtain 
\begin{equation}\label{7.6}
\hat B_1b_0(t)=\hat B_1B_uR_1^U(\hat BR_1^U)^{-1}\hat b(t).
\end{equation} 
Since $\hat B_1B_uR_1^U$ is invertible, it is easy to see from \eqref{7.5} and \eqref{7.6} that the reduced BC \eqref{7.4} is equivalent to the BC given in \eqref{2.2}. 

On the other hand, there exists an $(n-l)\times n$-matrix $\hat B_2$ such that 
$\left(\begin{array}{cc}
\hat B_1\\
\hat B_2\\
\end{array}\right)$ 
is invertible  for $\hat B_1$ is full-rank. Multiplying \eqref{7.2} with $\hat B_2$ from right, we get
\begin{equation}\label{7.7}
\hat B_2(B_p-B_uF^{-1})\bm\nu_0(0,t)=\hat B_2\Big(b_0(t)-B_u\bm{\bar u}_0(0,t)\Big).
\end{equation}
According to Lemma $3.4$ in~\cite{yong1999boundary}, $\hat B_2(B_p-B_uF^{-1})R_1^S$ is invertible. Thus, the initial value  $\bm\nu_0(0,t)$ is uniquely determined by \eqref{7.7}.\vspace{1ex}

Similarly, for \eqref{2.19} to have a bounded solution $\bm\nu_1=\bm\nu_1(\xi,t)$, the initial value $\bm{\nu}_1(0,t)$ has to be in the form $\bm\nu_1(0,t)=R_1^S\zeta$ with $\zeta$ an $(n-l)$-vector. Thus we multiply \eqref{7.3} with $\hat B_1$ from right to obtain
\begin{equation*}
\hat B_1B_u\bm{\bar u}_1(0,t)=\hat B_1\Big[b_1(t)+B_p(\bar A-F^2)\bm{\bar u}_{0x}(0,t)-B_uF^{-1}\int_0^{\infty}\bm{\mu}_{0t}(s,t)ds\Big].
\end{equation*}
With this and \eqref{7.1}, we can get the unique solution $\bm{\bar u}_1$ to the IBVP of equation \eqref{2.11}. Having $\bm{\bar u}_1$, we multiply \eqref{7.3} with $\hat B_2$ from right to get
\begin{align*}
\hat B_2(B_p-B_uF^{-1})\bm{\nu}_1(0,t)=&\hat B_2\Big[b_1(t)+B_p(\bar A-F^2)\bm{\bar u}_{0x}(0,t)\\[1mm]
&-B_uF^{-1}\int_0^{\infty}\bm{\mu}_{0t}(s,t)ds-B_u\bm{\bar u}_1(0,t)\Big].
\end{align*}
From this,  we get the initial value $\bm\nu_1(0,t)$.\vspace{1ex}
\begin{remark}
From the last relation, \eqref{6.7}, \eqref{7.1} and Remark \ref{remark 6.1}, we deduce that $\bm{\nu}_1(0,0)=0$. In addition, from Remark \ref{remark 6.1} and \eqref{2.18} it follows that $\bm{\mu}_1|_{t=0}=0$.
\end{remark}
 
\begin{remark}
The boundary and initial data of $\bm{\bar u}_1$ are compatible up to order $1$ if the corresponding boundary and initial data for $\bm{\bar u}_0$ are compatible up to order $2.$
\end{remark}

In conclusion, we  have determined all the coefficients in the expansion \eqref{2.4}-\eqref{2.6} and thus constructed a formal asymptotic solution to the  constructed IBVP of the relaxation system~\eqref{2.3}.

%%%%%%%%%%%%%%%%%%%%%%%%%%%%%%%%%%%%%%%%%%%%%%%%%%%%%%%%%%%%%%%%%%%%%%%%%%%%%%%%%%%%%%%%%%%
%%%%%%%%%%%%%%%%%%%%%%%%%%%%%%%%%%%%%%%%%%%%%%%%%%%%%%%%%%%%%%%%%%%%%%%%%%%%%%%%%%%%%%%%%%%
%%%%%%%%%%%%%%%%%%%%%%%%%%%%%%%%%%%%%%%%%%%%%%%%%%%%%%%%%%%%%%%%%%%%%%%%%%%%%%%%%%%%%%%%%%%
%%%%%%%%%%%%%%%%%%%%%%%%%%%%%%%%%%%%%%%%%%%%%%%%%%%%%%%%%%%%%%%%%%%%%%%%%%%%%%%%%%%%%%%%%%%
%%%%%%%%%%%%%%%%%%%%%%%%%%%%%%%%%%%%%%%%%%%%%%%%%%%%%%%%%%%%%%%%%%%%%%%%%%%%%%%%%%%%%%%%%%%
%%%%%%%%%%%%%%%%%%%%%%%%%%%%%%%%%%%%%%%%%%%%%%%%%%%%%%%%%%%%%%%%%%%%%%%%%%%%%%%%%%%%%%%%%%%
%%%%%%%%%%%%%%%%%%%%%%%%%%%%%%%%%%%%%%%%%%%%%%%%%%%%%%%%%%%%%%%%%%%%%%%%%%%%%%%%%%%%%%%%%%%
%%%%%%%%%%%%%%%%%%%%%%%%%%%%%% Effectiveness %%%%%%%%%%%%%%%%%%%%%%%%%%%%%%%%%%%%%%%%%%%%%%

\section{Effectiveness}

In this section, we prove the convergence by estimating the difference between the formal approximate solution and the exact solution to the constructed IBVP of the relaxation system \eqref{2.3}:
\begin{eqnarray}\label{8.1}
 \left\{ \begin{array}{l}\left(\begin{array}{cc}
\bm u^{\epsilon} \\
\bm p^{\epsilon}\\
\end{array}\right)_t+\left(\begin{array}{cc}
F & I_n\\
\bar A-F^2 & -F\\
\end{array}\right)\left(\begin{array}{cc}
\bm u^{\epsilon} \\
\bm p^{\epsilon}\\
\end{array}\right)_x=\frac{1}{\epsilon}\left(\begin{array}{cc}
0 & 0 \\
0 & -I_n\\
\end{array}\right)\left(\begin{array}{cc}
\bm u^{\epsilon} \\
\bm p^{\epsilon}\\
\end{array}\right),\vspace{2ex}\\
B\left(\begin{array}{cc}
\bm u^{\epsilon} \\
\bm p^{\epsilon}\\
\end{array}\right)(0,t)=b_{0}(t)+\epsilon b_1(t)+\epsilon^2b_2(t),\vspace{2ex}\\
\left(\begin{array}{cc}
\bm u^{\epsilon} \\
\bm p^{\epsilon}\\
\end{array}\right)(x,0)=\left(\begin{array}{cc}
\bm u_0 \\
0\\
\end{array}\right)(x)+\epsilon\left(\begin{array}{cc}
0 \\
-(\bar A-F^2)\bm{u}_{0x}\\
\end{array}\right)(x)+\epsilon^2\left(\begin{array}{cc}
0\\
\bm{p}_{02}\\
\end{array}\right)(x).\\
 \end{array} \right.
\end{eqnarray}
\vspace{1ex}

Recall that the formal asymptotic solution is 
\begin{equation*}
\left(\begin{array}{cc}
\bm u_\epsilon \\
\bm p_\epsilon\\
\end{array}\right)(x,t)=\left(\begin{array}{cc}
\bar{\bm u}_0 \\
\bar{\bm p}_0\\
\end{array}\right)(x,t)+\epsilon\left(\begin{array}{cc}
\bar{\bm u}_1 \\
\bar{\bm p}_1\\
\end{array}\right)(x,t)+\left(\begin{array}{cc}
\bm \mu_0\\
\bm\nu_0\\
\end{array}\right)(x/\epsilon,t)+\epsilon\left(\begin{array}{cc}
\bm \mu_1\\
\bm\nu_1\\
\end{array}\right)(x/\epsilon,t).
\end{equation*}
According to Section $7$ and Remark \ref{remark 6.1}, it is not difficult to see that the formal approximate solution $(\bm{u}_{\epsilon},\bm{p}_{\epsilon})$ satisfies
\begin{eqnarray}\label{8.2}
 \left\{ \begin{array}{l}\left(\begin{array}{cc}
\bm{u}_{\epsilon}\\
\bm{p}_{\epsilon}\\
\end{array}
\right)_t+\left(\begin{array}{cc}
F & I_n\\
\bar A-F^2 & -F\\
\end{array}
\right)\left(\begin{array}{cc}
\bm{u}_{\epsilon}\\
\bm{p}_{\epsilon}\\
\end{array}
\right)_x=\frac{1}{\epsilon}\left(\begin{array}{cc}
0 & 0\\
0 & -I_n\\
\end{array}
\right)\left(\begin{array}{cc}
\bm{u}_{\epsilon}\\
\bm{p}_{\epsilon}\\
\end{array}
\right)+\epsilon \left(\begin{array}{cc}
0\\
I_n\\
\end{array}
\right)y+\epsilon Y,\vspace{2ex}\\
B\left(\begin{array}{cc}
\bm{u}_{\epsilon}\\
\bm{p}_{\epsilon}\\
\end{array}
\right)(0,t)=b_0(t)+\epsilon b_1(t),\vspace{2ex}\\
\left(\begin{array}{cc}
\bm{u}_{\epsilon}\\
\bm{p}_{\epsilon}\\
\end{array}
\right)(x,0)=\left(\begin{array}{cc}
\bm u_{0}\\
0\\
\end{array}
\right)(x)+\epsilon\left(\begin{array}{cc}
0\\
-(\bar A-F^2)\bm{u}_{0x}\\
\end{array}
\right)(x),\\
 \end{array} \right.
\end{eqnarray}
where
\begin{equation*}
y=y(x,t)=\partial_t\bm{\bar p}_{1}+(\bar A-F^2)\partial_x\bm{\bar u}_{1}-F\partial_x\bm{\bar p}_{1},\quad 
Y=Y(x/\epsilon,t)=\left(
\begin{array}{cc}
\partial_t\bm{\mu}_{1}\\
\partial_t\bm{\nu}_{1}\\
\end{array}
\right).
\end{equation*}

To show the convergence, we make the following assumptions.

\begin{assumption}\label{assumption 8.1}
\end{assumption}

$(1)$ The boundary $x=0$ is non-characteristic  for the conservation laws \eqref{2.1}, that is, the coefficient matrix $F$ is invertible.

$(2)$ The initial data $\bm{u}_0\in H^5( \mathbb{R}^+)$ and boundary data \eqref{2.2} $\hat b\in H^4(0,t_{*}).$

$(3)$ At $(x,t)=(0,0),$ these initial and boundary data are compatible up to order $3.$

\begin{assumption}\label{assumption 8.2}
\end{assumption}

$(1)$ The initial and boundary data in \eqref{8.1} are compatible up to order $2.$

$(2)$ $\bm{p}_{02}\in H^3( \mathbb{R}^+),$ $b_0\in H^4(0,t_{*})$ and $b_1,b_2\in H^3(0,t_{*}).$\vspace{1ex}

Under these assumptions, we use Lemma 7.1 in \cite{zhou2020construction} (see also \cite{metivier2004small} ) and can obtain the following existence result, in which 
$$CH_{t_*}^s=\bigcap_{k\leq s}C^k([0,t_{*}];H^{s-k}(\mathbb R^+))$$
and $H^{k}(\mathbb R^+)$ is the Sobolev space of functions on $\mathbb R^+$  with all derivatives, up to order k, being square-integrable.

\begin{lemma}\label{lemma 8.1}

    $(1)$  The IBVP \eqref{8.1} has an unique solution $(\bm u^\epsilon,\bm p^\epsilon)\in CH_{t_{*}}^3$.\vspace{1ex}

    $(2)$ There is an unique $\bar{\bm u}_0\in CH_{t_{*}}^4$ and an unique $(\bar{\bm u}_1,\bar{\bm p}_1)\in CH_{t_{*}}^2$. Moreover, $\bar{\bm u}_0(0,t)\in H^4(0,t_{*}),$ $\bar{\bm u}_1(0,t)\in H^2(0,t_{*}).$\vspace{1ex}

    $(3)$ ${\bm \mu}_0,{\bm \nu}_0\in H^4([0,t_{*}]\times \mathbb R^+),$ ${\bm \mu}_1,{\bm \nu}_1\in H^2([0,t_{*}]\times \mathbb R^+)$.
\end{lemma}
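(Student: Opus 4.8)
The plan is to establish each of the three existence-and-regularity claims by reducing them to standard well-posedness theory for first-order hyperbolic initial-boundary-value problems, using the hypotheses that the constructed boundary matrix $B$ satisfies the generalized Kreiss condition (verified in Section $5$) and that the compatibility conditions of Assumptions \ref{assumption 8.1} and \ref{assumption 8.2} hold. For part $(1)$, the IBVP \eqref{8.1} is a linear hyperbolic system with a stiff relaxation term scaled by $1/\epsilon$, with boundary matrix $B$ fulfilling the GKC. The idea is to invoke Lemma $7.1$ of \cite{zhou2020construction} directly: the GKC guarantees that the boundary condition is maximally dissipative (or can be symmetrized to be so) in the sense required by that lemma, and Assumption \ref{assumption 8.2}$(1)$ supplies the order-$2$ compatibility at $(x,t)=(0,0)$ needed for an $H^3$-regular solution. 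I would spell out that the regularity loss is governed by the Sobolev index of the data: with $\bm u_0,\bm p_{02}$ and $b_0,b_1,b_2$ at the levels stated in Assumption \ref{assumption 8.2}$(2)$, one obtains a solution in $CH^3_{t_*}$, the key point being that the estimates produced by the lemma are uniform in $\epsilon$ (this uniformity is exactly what the GKC buys and is what makes the later error estimate meaningful).

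For part $(2)$, the outer profiles are governed by decoupled, $\epsilon$-free hyperbolic problems. The leading profile $\bar{\bm u}_0$ solves \eqref{2.9} with initial data \eqref{7.1} and the reduced boundary condition \eqref{7.4}, which Section $7$ showed is equivalent to the original well-posed boundary condition \eqref{2.2} for the conservation laws. Since \eqref{2.2} satisfies the (ordinary) Kreiss condition by hypothesis and the data $\bm u_0,\hat b$ meet Assumption \ref{assumption 8.1}, standard linear hyperbolic IBVP theory gives a unique $\bar{\bm u}_0\in CH^4_{t_*}$, with the trace $\bar{\bm u}_0(0,t)\in H^4(0,t_*)$ by the usual trace theorem for such problems. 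The first corrector $(\bar{\bm u}_1,\bar{\bm p}_1)$ is then determined from \eqref{2.10} and \eqref{2.11}: $\bar{\bm p}_1$ is read off algebraically as $-(\bar A-F^2)\bar{\bm u}_{0x}$, losing one derivative relative to $\bar{\bm u}_0$, and $\bar{\bm u}_1$ solves the forced transport equation \eqref{2.11} with the boundary condition derived in Section $7$. Because the forcing $-\bar{\bm p}_{1x}$ and the boundary datum involve first derivatives of $\bar{\bm u}_0$, the corrector inherits two fewer derivatives, landing in $CH^2_{t_*}$ with $\bar{\bm u}_1(0,t)\in H^2(0,t_*)$.

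For part $(3)$, the boundary-layer profiles are recovered by solving the ordinary differential equations \eqref{2.17} and \eqref{2.19} in the fast variable $\xi$, with the initial values $\bm\nu_0(0,t)$ and $\bm\nu_1(0,t)$ determined in Section $7$ via \eqref{7.7} and its analogue. Since $F\bar A^{-1}$ restricted to the relevant invariant subspace is a stable matrix (its eigenvalues $\lambda_j/a_j$ for $j>l$ are negative), \eqref{2.17} produces $\bm\nu_0$ decaying exponentially in $\xi$, so that integration against $e^{-c\xi}$ converts the $t$-regularity of the boundary data into space-time $H^4$ regularity on $[0,t_*]\times\mathbb R^+$; the same argument with the extra inhomogeneity $F^{-1}\bm\nu_{0t}$ in \eqref{2.19} costs two derivatives and gives $\bm\mu_1,\bm\nu_1\in H^2$. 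I expect the main obstacle to be the careful accounting of Sobolev indices and compatibility orders across the coupled cascade — verifying that the stated data regularities in Assumption \ref{assumption 8.1} propagate through the algebraic relations (each spatial derivative in \eqref{2.10} and \eqref{2.19} costing one order) and through the trace maps so that every claimed membership $CH^4_{t_*}$, $CH^2_{t_*}$, $H^4$, $H^2$ is exactly attained and no further loss occurs. The hyperbolic solvability itself is routine once the GKC and Kreiss condition are in hand; the delicate part is the bookkeeping of where derivatives are spent and confirming the compatibility at $(0,0)$ is high enough to justify the regularity quoted for $\bm u^\epsilon$ and $\bar{\bm u}_0$.
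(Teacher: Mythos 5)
Your proposal follows essentially the same route as the paper: the paper offers no proof of Lemma \ref{lemma 8.1} beyond the sentence preceding it, which simply invokes Lemma 7.1 of \cite{zhou2020construction} (see also \cite{metivier2004small}) under Assumptions \ref{assumption 8.1} and \ref{assumption 8.2}, and your plan --- that citation for part (1), standard linear hyperbolic IBVP theory for the outer profiles $\bar{\bm u}_0$ and $(\bar{\bm u}_1,\bar{\bm p}_1)$, and the exponentially decaying ODE cascade in $\xi$ for the boundary layers, with the Sobolev/compatibility bookkeeping you describe --- is exactly the argument that citation stands in for.

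One caveat on your justification for part (1): the generalized Kreiss condition does \emph{not} guarantee that the boundary condition is maximally dissipative; in this series of papers, strict dissipativeness is a \emph{sufficient} condition for the GKC (this is what \cite{zhou2020construction} verifies instead of the GKC), not a consequence of it. The correct mechanism is that the GKC implies the Uniform Kreiss Condition (Remark 3.2 of \cite{yong1999boundary}), after which the existence theory of \cite{benzoni2007multi} applies; this is precisely how the paper itself argues for $W_2$ in Section 8. Relatedly, Lemma \ref{lemma 8.1} asserts only existence and regularity for each fixed $\epsilon$; the $\epsilon$-uniform control you attribute to it is not part of the lemma and is established separately in Theorem \ref{theorem 8.2} by the energy method and Laplace transform.
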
 
\vspace{1ex}

Now we can state the main result of this section.

\begin{theorem}\label{theorem 8.2}
Under the strict sub-characteristic condition, the assumptions \ref{assumption 8.1} and \ref{assumption 8.2}, there exists a constant $C>0$ such that
\begin{equation*}
\max_{t\in[0,t_{*}]}\|\left(\bm u^{\epsilon}-\bm u_{\epsilon},\bm p^{\epsilon}-\bm p_{\epsilon}\right)(\cdot,t)\|_{L^2(\mathbb R^+)}\leq C\epsilon^{\frac{3}{2}}.
\end{equation*}
\end{theorem}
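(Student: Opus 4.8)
The plan is to derive a closed initial-boundary-value problem for the error
$W=(\bm e,\bm g)^{\top}:=(\bm u^{\epsilon}-\bm u_{\epsilon},\,\bm p^{\epsilon}-\bm p_{\epsilon})^{\top}$
by subtracting \eqref{8.2} from \eqref{8.1}, and then to close a uniform-in-$\epsilon$ energy estimate whose right-hand side is of order $\epsilon^{3}$. Subtracting the two systems gives
\begin{equation*}
W_t+AW_x=\frac{1}{\epsilon}\begin{pmatrix}0&0\\0&-I_n\end{pmatrix}W-\epsilon\begin{pmatrix}0\\ I_n\end{pmatrix}y-\epsilon Y,\qquad BW(0,t)=\epsilon^{2}b_2(t),
\end{equation*}
with initial datum $W(x,0)=\epsilon^{2}(0,\bm p_{02})^{\top}(x)$. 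The crucial bookkeeping is the size of each forcing term in $L^2(\mathbb R^+)$: the initial and boundary data are $O(\epsilon^{2})$; the outer source $\epsilon(0,y)^{\top}$ is only $O(\epsilon)$ but lies entirely in the relaxation (range of $S$) direction; and the boundary-layer source $\epsilon Y(x/\epsilon,t)$ is in fact $O(\epsilon^{3/2})$, since $\|Y(\cdot/\epsilon,t)\|_{L^2}=\sqrt{\epsilon}\,\|Y(\cdot,t)\|_{L^2}$ by the change of variables $x=\epsilon\xi$. Lemma \ref{lemma 8.1} supplies the regularity of all these data.

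Next I would run the energy method with a symmetrizer adapted to the Jin--Xin structure. Conjugating by $T$ block-diagonalizes the system into $n$ scalar $2\times2$ systems in variables $(\tilde u_j,\tilde p_j)$, each having coefficient matrix $\left(\begin{smallmatrix}\lambda_j&1\\a_j-\lambda_j^2&-\lambda_j\end{smallmatrix}\right)$ and being symmetrized by $\text{diag}(a_j-\lambda_j^2,\,1)$; here the \emph{strict} sub-characteristic condition $a_j>\lambda_j^2$ is exactly what makes this symmetrizer positive definite and the relaxation dissipation $\tfrac2\epsilon|\tilde p_j|^2$ nonnegative. The associated energy $E(t)\simeq\|W(\cdot,t)\|_{L^2}^2$ then obeys an identity of the form
\begin{equation*}
\frac{d}{dt}E(t)=\mathcal B(t)-\frac{c_0}{\epsilon}\|\bm g(\cdot,t)\|_{L^2}^2+2\big\langle A_0\,(\text{source}),W\big\rangle,
\end{equation*}
where $\mathcal B(t)=\langle A_0AW,W\rangle|_{x=0}$ is the boundary term and $c_0>0$. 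The source pairing is controlled by the dissipation: the relaxation-direction term is absorbed through $\epsilon\,|y||\bm g|\le\frac{c_0}{2\epsilon}|\bm g|^2+C\epsilon^{3}|y|^2$, contributing $O(\epsilon^{3})$, while the boundary-layer source contributes $C\|\epsilon Y\|_{L^2}^2=O(\epsilon^{3})$ directly. Together with $E(0)=O(\epsilon^{4})$, Gronwall's inequality would yield $\sup_t E(t)=O(\epsilon^{3})$, hence the claim, \emph{provided} $\int_0^{t_{*}}\mathcal B(t)\,dt$ can be bounded by $O(\epsilon^{3})$.

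The main obstacle is precisely this boundary term. Because the constructed $B$ only satisfies the GKC and is not strictly dissipative (in contrast to \cite{zhou2020construction}), the quadratic form $\langle A_0AW,W\rangle|_{x=0}$ is indefinite on the constraint set $\{W(0):BW(0)=\epsilon^{2}b_2\}$ and cannot be controlled by $E$ alone. Here I would bring in the GKC through a Laplace transform in $t$: transforming the error system produces, at each frequency, the ODE governed by $M(\eta,\xi_0)$ of Section 4, whose stable subspace is spanned by $R_M^S(\eta,\xi_0)$ in \eqref{RMS}, and the GKC \eqref{4.4}--\eqref{4.5}, valid with a uniform constant $c_K$ thanks to the strict sub-characteristic lower bound in Lemma \ref{lemma 4.1}, bounds the transformed boundary trace uniformly in both the frequency and $\epsilon$. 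By Plancherel this returns a time-domain trace estimate
\begin{equation*}
\|W(0,\cdot)\|_{L^2(0,t_{*})}^2\le C\Big(\|BW(0,\cdot)\|_{L^2(0,t_{*})}^2+\text{(interior forcing)}\Big),
\end{equation*}
whose right-hand side is $O(\epsilon^{3})$ by the bookkeeping above; feeding this back bounds $\int_0^{t_{*}}\mathcal B\,dt$ and closes the estimate at order $\epsilon^{3}$. The delicate point throughout is that the parameter $\eta$ in $M(\eta,\xi_0)$ encodes the singular relaxation scale, so that the GKC is exactly the condition making the boundary estimate \emph{uniform} in $\epsilon$; reconciling this frequency-domain input with the time-domain energy identity, with all constants independent of $\epsilon$, is the technical heart of the argument.
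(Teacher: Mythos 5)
Your setup, bookkeeping of the forcing sizes (initial/boundary data $O(\epsilon^2)$, outer source absorbable into the relaxation dissipation at cost $\epsilon^3|y|^2$, boundary-layer source $O(\epsilon^{3/2})$ in $L^2$ by the scaling $x=\epsilon\xi$), and the symmetrizer $A_0$ all match the paper, and you correctly identify the indefinite boundary term as the real obstacle. But your plan for overcoming that obstacle has a genuine gap. You propose to Laplace-transform the \emph{full} error system $W$ and then invoke the GKC in its stable-subspace/determinant form (Section 4) to get a trace estimate. That form of the GKC only applies when the transformed ODE is \emph{homogeneous}: then every $L^2$-bounded solution satisfies $\hat W(0,\xi_0)\in\mathrm{span}\,R_M^S(\eta,\xi_0)$ and the lower bound \eqref{4.4} immediately controls $|\hat W(0,\xi_0)|$ by $|B\hat W(0,\xi_0)|$. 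Your $W$-system is not of this type: the interior sources $\epsilon(0,y)^{\top}$, $\epsilon Y$ and the nonzero initial datum $\epsilon^2(0,\bm p_{02})^{\top}$ (which, under the Laplace transform of $W_t$, also appears as an inhomogeneity) turn the transformed equation into $\hat W_x=M\hat W+\hat f$, whose bounded solutions have a particular-solution component \emph{outside} the stable subspace. Extracting a trace estimate with interior forcing, uniformly in $\eta=1/\epsilon$, is not a Plancherel exercise; it requires the full Kreiss symmetrizer machinery adapted to the stiff parameter (essentially the main estimate of \cite{yong1999boundary}), which you neither carry out nor cite — you flag it as ``the technical heart'' but leave it unproved, and that is precisely the step on which the argument stands or falls.

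The paper closes this gap with a decomposition you do not have: following \cite{gustafsson1995time}, it writes $W=W_1+W_2$, where $W_1$ carries \emph{all} the interior forcing and the initial data but is assigned the auxiliary boundary condition $L_+W_1(0,t)=0$ ($L_\pm$ the rows of the diagonalizer of $A$). This auxiliary BC is strictly dissipative, so a pure energy estimate — no Laplace transform, no GKC — gives both $\|W_1\|^2=O(\epsilon^3)$ and, crucially, the trace bound $\|L_-W_1(0,\cdot)\|^2_{L^2(0,t_*)}=O(\epsilon^3)$ from the good sign of the boundary flux. Then $W_2$ solves the interior-\emph{homogeneous} system with zero initial data and boundary data $-BW_1(0,t)+\epsilon^2b_2(t)$; its Laplace transform genuinely lies in the stable subspace, so the GKC applies in exactly the clean form you wanted, yielding $\int_0^{t_*}|W_2(0,t)|^2dt\le C(\epsilon^4+\epsilon^3)$, after which a second (source-free, boundary-controlled) energy estimate finishes the proof. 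In short: your two tools are the right ones, but they must be applied to two different pieces of the error, not both to $W$ itself; without the splitting, the frequency-domain step you sketch does not go through.
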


\begin{proof}
Set
\begin{equation*}
W=\left(\begin{array}{cc}
W^I\\
W^{II}\\
\end{array}\right)=\left(\begin{array}{cc}
\bm {u}^{\epsilon}\\
\bm {p}^{\epsilon}\\
\end{array}\right)-\left(\begin{array}{cc}
\bm {u}_{\epsilon}\\
\bm {p}_{\epsilon}\\
\end{array}\right).
\end{equation*}
From \eqref{8.1} and \eqref{8.2}, it follows that $W(x,t)$ satisfies
\begin{eqnarray}\label{8.3}
 \left\{ \begin{array}{l}
 W_t+AW_x=\frac{1}{\epsilon}\left(\begin{array}{cc}
0 & 0\\
0 & -I_n\\
\end{array}
\right)W-\epsilon \left(\begin{array}{cc}
0\\
I_n\\
\end{array}
\right)y-\epsilon Y,\vspace{2ex}\\
BW(0,t)=\epsilon^2b_2(t),\vspace{2ex}\\
W(x,0)=\epsilon^2\left(
\begin{array}{cc}
0\\
\bm{p}_{02}\\
\end{array}
\right)(x),\\
 \end{array} \right.
\end{eqnarray}
where
$$
A=\left(\begin{array}{cc}
F & I_n\\
\bar A-F^2 & -F\\
\end{array}
\right).
$$

 Recall that $F=T\Lambda T^{-1}$ and $\bar A=T\bar\Lambda T^{-1}$. Set
\begin{equation*} 
A_0= \left(
\begin{array}{cc}
T^{-*}(\bar\Lambda-\Lambda^2)T^{-1} &\quad     0\\
0      &\quad  T^{-*}T^{-1}   \\
\end{array}
\right).
\end{equation*} 
It is not difficult to verify that $A_0A$ is symmetric and $A_0$ is symmetric positive definite under the strict sub-characteristic condition $a_j>\lambda_j^2.$ This indicates that the system in \eqref{8.3} satisfies the structural stability condition~\cite{yong1999singular} and is symmetrizable hyperbolic. 
Therefore, there exists an invertible matrix $L$ such that
\begin{equation*} 
 LAL^{-1}=\text{diag}(\sqrt a_1,\sqrt a_2,\cdots,\sqrt a_n,-\sqrt a_1,-\sqrt a_2,\cdots, -\sqrt a_n)\equiv D.
\end{equation*}

Denote by $L_{\pm}$ the first~(last) $n$ rows of $L$. We follow \cite{gustafsson1995time} and decompose the solution $W(x,t)$ as
$$
W(x,t)=W_1(x,t)+W_2(x,t).
$$
Here $W_1=W_1(x,t)$ solves 
\begin{eqnarray}\label{8.4}
 \left\{ \begin{array}{l}
 W_{1t}+AW_{1x}=\frac{1}{\epsilon}\left(\begin{array}{cc}
0 & 0\\
0 & -I_n\\
\end{array}
\right)W_1-\epsilon \left
 (\begin{array}{cc}
0\\
I_n\\
\end{array}
\right)y-\epsilon Y,\vspace{1ex}\\
L_+W_1(0,t)=0,\vspace{1ex}\\
W_1(x,0)=\epsilon^2\left(
\begin{array}{cc}
0\\
\bm{p}_{02}\\
\end{array}
\right)(x),
 \end{array} \right.
\end{eqnarray}
while $W_2=W_2(x,t)$ satisfies
\begin{eqnarray}\label{8.5}
 \left\{ \begin{array}{l}
 W_{2t}+AW_{2x}=\frac{1}{\epsilon}\left(
\begin{array}{cc}
0 & 0\\
0 & -I_n\\
\end{array}
\right)W_2,\vspace{2ex}\\
BW_2(0,t)=-BW_1(0,t)+\epsilon^2b_2(t),\vspace{2ex}\\
W_2(x,0)=0.\\
 \end{array} \right.
\end{eqnarray}
It is known from \cite{gustafsson1995time} that the BC in \eqref{8.4} satisfies the Uniform Kreiss Condition. According to the existence theory in \cite{benzoni2007multi}, there exists a unique solution $W_1\in C([0,t_{*}];L^2(\mathbb R^{+})).$ In addition, by Remark $3.2$ in \cite{yong1999boundary}, the Uniform Kreiss Condition is implied by the GKC. Thus, the existence theory in \cite{benzoni2007multi} indicates that there exists a unique solution $W_2\in C([0,\infty);L^2(\mathbb R^{+}))$ provided that $W_1(0,t)$ and $b_2(t)$ in \eqref{8.5} are replaced by their zero-extensions. 
\vspace{1ex}   

For $W_1$, we multiply \eqref{8.4} with $W_1^*A_0$ from right to get
\begin{align*}
(W_1^*A_0W_1)_t+(W_1^*A_0AW_1)_x & =\frac{2}{\epsilon}W_1^*A_0\left(\begin{array}{cc}
0 & 0\\
0 & -I_n\\
\end{array}
\right)W_1-2\epsilon W_1^*A_0\left(
\begin{array}{cc}
0\\
I_n\\
\end{array}
\right)y-2\epsilon W_1^*A_0Y \nonumber\\[3mm]
& \leq-c_0\frac{|W_1^{II}|^2}{\epsilon}+C\epsilon^3|y|^2+C|W_1|^2+C\epsilon^2|Y|^2,
\end{align*}
where $W_1^{II}$ stands for the last $n$ components of $W_1$, $C>0$ is a generic constant and $c_0>0$ is a small constant.
Integrating the last inequality over $x\in[0,\infty)$ we have
\begin{align}\label{8.6}
&\frac{d}{dt}\int_0^{\infty}W_1^*A_0W_1dx+c_0\int_0^{\infty}\frac{|W_1^{II}|^2}{\epsilon}dx-W_1^*A_0AW_1|_{x=0}\nonumber\\[4mm]
\leq& C\left(\int_0^{\infty}\epsilon^3|y|^2dx+\int_0^{\infty}|W_1|^2dx+\int_0^{\infty}\epsilon^2|Y(x/\epsilon,t)|^2dx\right)\nonumber\\[4mm]
=& C\left(\int_0^{\infty}\epsilon^3|y|^2dx+\int_0^{\infty}|W_1|^2dx+\epsilon^3\int_0^{\infty}|Y(\xi,t)|^2d\xi\right).
\end{align}
For the boundary term, we notice that $L^{-*}A_0L^{-1}$ is block-diagonal~\cite{herty2016feedback} and have
\begin{align*}
-W_1^*A_0AW_1\Big|_{x=0} 
                    & = -W_1^*(0,t)A_0L^{-1}D LW_1(0,t)\nonumber\\[2mm]
                    & = -W_1^*(0,t)L^{*}L^{-*}A_0L^{-1}DLW_1(0,t)\nonumber\\[2mm]
                     & \geq c_1|L_{-}W_1(0,t)|^2
\end{align*}
with $c_1$ a positive constant. In addition, we have 
$C^{-1}W_1^*A_0W_1\leq|W_1|^2\leq CW_1^*A_0W_1$ 
due to the positiveness of $A_0$. Consequently, applying  Gronwall's inequality to \eqref{8.6} we obtain
\begin{align}\label{8.7}
\|W_1(\cdot,t)\|^2_{L^2(\mathbb R^+)}
\leq  Ce^{Ct_{*}}\left(\|W_1(\cdot,0)\|^2_{L^2(\mathbb R^+)}+\epsilon^3\int_0^{t_{*}}\int_0^{\infty}|y|^2dx+\epsilon^3
       \int_0^{t_{*}}\int_0^{\infty}|Y(\xi,t)|^2d\xi\right).
\end{align}

From  Assumption \ref{assumption 8.2} and Lemma \ref{lemma 8.1}, we know that 
\begin{align*} 
&\bm{p}_{02}\in L^2(\mathbb R^+),\qquad\quad \bm{\mu}_1,\bm{\nu}_1\in H^1([0,t_{*}]\times\mathbb R^+),\\[2mm]
&y=\partial_t\bm{\bar p}_{1}+(\bar A-F^2)\partial_x\bm{\bar u}_{1}-F\partial_x\bm{\bar p}_{1}\in L^2([0,t_{*}]\times\mathbb R^+).
\end{align*}
Thus, it follows that 
\begin{align*} 
&\|W_1(\cdot,0)\|^2_{L^2(\mathbb R^+)}=\int_0^{\infty}|\epsilon^2\bm{p}_{02}|^2dx\leq C\epsilon^4,\\[2mm]
&\int_0^{t_{*}}\int_0^{\infty}|y(x,t)|^2dx\leq C,\qquad
\int_0^{t_{*}}\int_0^{\infty}|Y(\xi,t)|^2d\xi\leq C.
\end{align*}
Combining these with \eqref{8.7}, we get
\begin{equation*} 
\|W_1(\cdot,t)\|^2_{L^2(\mathbb R^+)}\leq Ce^{Ct_{*}}\epsilon^{3}.
\end{equation*}
Then we integrate \eqref{8.6} over $t\in[0,t_{*}]$  to obtain
\begin{equation}\label{8.8}
\|W_1\|^2_{C([0,t_{*}];L^2(\mathbb R^+))}+\frac{1}{\epsilon}\|W_1^{II}\|^2_{L^2([0,t_{*}]\times \mathbb R^+)}+\|L_{-}W_1|_{x=0}\|^2_{L^2([0,t_{*}])}\leq Ce^{Ct_{*}}\epsilon^{3}.\\[2mm]
\end{equation}

 Next we follow \cite{gustafsson1995time} to estimate $W_2$. 
 Denote by $\hat W_2=\hat W_2(x,\xi_0)$ the Laplace transform of $W_2=W_2(x,t)$ with respect to time $t$. It follows from \eqref{8.5} that 
\begin{eqnarray}\label{8.9}
 \left\{ \begin{array}{l}
\hat W_{2x}=A^{-1}(\eta S-\xi_0I_n)\hat W_2\equiv M(\eta,\xi_0)\hat W_2,\vspace{2ex}\\
B\hat W_2(0,\xi_0)=\epsilon^2\hat b_2(\xi_0)-B\hat W_1(0,\xi_0),\vspace{2ex}\\
\|\hat W_2(\cdot,\xi_0)\|_{L^2(\mathbb R^+)}<\infty,\qquad \text{for a.e.}\quad \xi_0,
 \end{array} 
 \right.
\end{eqnarray}
where $\eta=1/\epsilon$ and $S=\text{diag}(0,-I_n)$.\vspace{1ex} 

Recall from Lemma $2.3$ in \cite{yong1999boundary} that, under the sub-characteristic condition, $M=M(\eta,\xi_0)$ has $n$ stable eigenvalues and $n$ unstable eigenvalues for all $\eta\geq0$ and all $\xi_0$ with $\text{Re}\xi_0>0$. By $Shur^{,}s$ theorem, there exists an unitary matrix $U$ such that
\begin{equation*}
U^{*}MU=\left(
\begin{array}{cc}
M_{11} & M_{12} \\
  0    & M_{22} \\
\end{array}
\right),
\end{equation*}
where the $M_{11}$ is a stable $n\times n$-matrix and the $M_{22}$ is a unstable $n\times n$-matrix. Set $\varphi=U^{*}\hat W_{2}=\left(
\begin{array}{c}
\varphi_1 \\
\varphi_2 \\
\end{array}
\right).$
The equation in \eqref{8.9} becomes
\begin{equation*}
\partial_x \left(
\begin{array}{c}
\varphi_1(x,\xi_0)\\
\varphi_2(x,\xi_0)\\
\end{array}
\right)=\left(
\begin{array}{cc}
M_{11} & M_{12}\\
   0   & M_{22}\\
\end{array}
\right)\left(
\begin{array}{cc}
\varphi_1(x,\xi_0)\\
\varphi_2(x,\xi_0)\\
\end{array}
\right).
\end{equation*}
The bounded solution to the last equation is $\varphi_2=0$ and 
\begin{eqnarray*}
\varphi_1(x,\xi_0) & = & e^{M_{11}x}\varphi_1(0,\xi_0).
\end{eqnarray*}
The corresponding BC becomes
\begin{equation*}
B\hat W_2(0,\xi_0)=BU_{I}\varphi_1(0,\xi_0)+BU_{II}\varphi_2(0,\xi_0)=\epsilon^2\hat b_2(\xi_0)-B\hat W_1(0,\xi_0),
\end{equation*}
where $U=(U_{I},U_{II}).$ Thus we have 
\begin{eqnarray*}
BU_{I}\varphi_1(0,\xi_0)& = & \epsilon^2\hat b_2(\xi_0)-B\hat W_1(0,\xi_0).
\end{eqnarray*}

Notice that $(BU_{I})^{-1}$ is uniformly bounded due to the GKC. We have
\begin{align*}
|\varphi_1(0,\xi_0)| & =\left|(BU_{I})^{-1}[\epsilon^2\hat b_2(\xi_0)-B\hat W_1(0,\xi_0)]\right|\\[2mm]
                     & \leq C\left(|\epsilon^2\hat b_2(\xi_0)|+|\hat W_1(0,\xi_0)|\right).
\end{align*}
Since $U$ is a unitary matrix,  it is easy to see that 
\begin{align*}
|\hat W_2(0,\xi_0)|\leq C\left(\epsilon^2|\hat b_2(\xi_0)|+|\hat W_1(0,\xi_0)|\right).
\end{align*}
According to the Parseval equality, the last inequality leads to
\begin{align*}
\int_0^{\infty}e^{-2t\text{Re}\xi_0}|W_2(0,t)|^2dt
     &  \leq C\left(\int_0^{\infty}e^{-2t\text{Re}\xi_0}|\epsilon^2 b_2(t)|^2dt+\int_0^{\infty}e^{-2t\text{Re}\xi_0}|W_1(0,t)|^2dt\right)\\[3mm]
     &   \leq C\left(\epsilon^4\int_0^{\infty}| b_2(t)|^2dt+\int_0^{\infty}|W_1(0,t)|^2dt\right).
\end{align*}
Because  the right-hand side is independent of $\text{Re}\xi_0,$ then  we have
\begin{align*}
\int_0^{\infty}|W_2(0,t)|^2dt\leq C\left(\epsilon^4\int_0^{\infty}| b_2(t)|^2dt+\int_0^{\infty}|W_1(0,t)|^2dt\right).
\end{align*}
By a standard argument in \cite{gustafsson1995time}, the last inequality implies
\begin{align}\label{8.10}
\int_0^{t_{*}}|W_2(0,t)|^2dt
&\leq C\left(\epsilon^4\| b_2(t)\|_{L^2([0,t_{*}])}^2+\|W_1(0,t)\|_{L^2([0,t_{*}])}^2\right)\nonumber\\[2mm]
&\leq C\left(\epsilon^4+\epsilon^3\right).
\end{align}
Here the estimate \eqref{8.8} has been used.
\vspace{1ex}

Finally, we multiply \eqref{8.5} with $W_2^*A_0$ from right to obtain
\begin{align*}
(W_2^*A_0W_2)_t+(W_2^*A_0AW_2)_x & =\frac{2}{\epsilon}W_2^*A_0\left(
\begin{array}{cc}
0 & 0\\
0 & -I_n\\
\end{array}
\right)W_2\leq 0.
\end{align*}
Integrating the above inequality over $(x,t)\in[0,\infty)\times[0,t_{*}]$ and using \eqref{8.10}, we get
\begin{align*}
\max_{t\in[0,t_{*}]}\|W_2(\cdot,t)\|_{L^2(\mathbb R^+)}^2\leq C\int_0^{t_{*}}|W_2(0,t)|^2dt \leq C\epsilon^3.
\end{align*}
This together with \eqref{8.8} completes the proof.\vspace{1ex}
\end{proof}

Furthermore, we have the following $H^{1}$-estimate.
\begin{theorem}
Under the strict sub-characteristic condition, assumptions \ref{assumption 8.1} and \ref{assumption 8.2}, there exists a constant $C>0$ such that
\begin{equation*}
\max_{t\in[0,t_{*}]}\|\left(\bm u^{\epsilon}-\bm u_{\epsilon},\bm p^{\epsilon}-\bm p_{\epsilon}\right)(\cdot,t)\|_{H^1(\mathbb R^+)}\leq C\epsilon^{\frac{1}{2}}.
\end{equation*}
\end{theorem}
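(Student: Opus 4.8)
The plan is to exploit that the spatial norm splits as $\|W(\cdot,t)\|_{H^1(\mathbb R^+)}\le \|W(\cdot,t)\|_{L^2}+\|\partial_x W(\cdot,t)\|_{L^2}$, where $W=(\bm u^\epsilon-\bm u_\epsilon,\bm p^\epsilon-\bm p_\epsilon)$ solves \eqref{8.3}. The first term is already $O(\epsilon^{3/2})$ by Theorem~\ref{theorem 8.2}, so the task reduces to the spatial derivative. Since $A$ is invertible (its eigenvalues are $\pm\sqrt{a_j}$), I would recover $\partial_x W$ algebraically from the error equation,
\begin{equation*}
A\,\partial_x W=-\partial_t W+\tfrac1\epsilon\begin{pmatrix}0&0\\0&-I_n\end{pmatrix}W-\epsilon\begin{pmatrix}0\\I_n\end{pmatrix}y-\epsilon Y,
\end{equation*}
so that $\|\partial_x W(\cdot,t)\|_{L^2}$ is controlled by $\|\partial_t W(\cdot,t)\|_{L^2}$, by $\tfrac1\epsilon\|W^{II}(\cdot,t)\|_{L^2}$ (with $W^{II}$ the last $n$ entries of $W$), and by the two forcing terms. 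Here $\tfrac1\epsilon\|W^{II}(\cdot,t)\|_{L^2}\le \tfrac1\epsilon\|W(\cdot,t)\|_{L^2}=O(\epsilon^{1/2})$ again by Theorem~\ref{theorem 8.2}, while $\epsilon\|y(\cdot,t)\|_{L^2}=O(\epsilon)$ and $\epsilon\|Y(\cdot/\epsilon,t)\|_{L^2}=O(\epsilon^{3/2})$ after the change of variables $\xi=x/\epsilon$. The dominant contribution is thus the division of the relaxation component by $\epsilon$, and this is exactly what degrades the rate from $\epsilon^{3/2}$ to $\epsilon^{1/2}$.

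It remains to estimate $\|\partial_t W(\cdot,t)\|_{L^2}$, which I would obtain by differentiating the whole IBVP \eqref{8.3} in $t$. Since $t$ is tangential to the boundary $\{x=0\}$, the function $V:=\partial_t W$ solves an IBVP of exactly the same structure, with boundary data $BV(0,t)=\epsilon^2\partial_t b_2(t)$, forcing $-\epsilon\begin{pmatrix}0\\ \partial_t y\end{pmatrix}-\epsilon\,\partial_t Y$, and initial data $V(x,0)=\partial_t W(x,0)$ read off from the equation at $t=0$. The entire argument of Theorem~\ref{theorem 8.2}---the symmetrizer $A_0$, the splitting $V=V_1+V_2$ as in \eqref{8.4}--\eqref{8.5}, the energy estimate for $V_1$ and the Laplace-transform/GKC estimate for the trace $V_2(0,t)$---then applies verbatim, because the coefficient matrix, the relaxation structure, and the boundary matrix $B$ (hence the GKC) are unchanged. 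The key quantitative check is that no power of $\epsilon$ is lost: evaluating the equation at $t=0$ gives $\partial_t W(x,0)=-A\,\partial_x W(x,0)+\tfrac1\epsilon SW(x,0)+O(\epsilon)$ in $L^2$, and the potentially singular term $\tfrac1\epsilon SW(x,0)$ acts on the $O(\epsilon^2)$ initial datum $W(x,0)$ to produce only $O(\epsilon)$; together with $\|\partial_t y\|_{L^2},\|\partial_t Y\|_{L^2}=O(1)$ from Lemma~\ref{lemma 8.1}, this yields $\|\partial_t W(\cdot,t)\|_{L^2}=O(\epsilon)$.

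Combining the three bounds gives $\|\partial_x W(\cdot,t)\|_{L^2}=O(\epsilon^{1/2})$, and hence the claimed estimate. The main obstacle is not the energy estimate itself, which is a faithful repetition of Theorem~\ref{theorem 8.2}, but the justification that the time-differentiated IBVP is well posed in the required spaces: this needs the corner compatibility up to order $3$ in Assumption~\ref{assumption 8.1} and the regularity of the expansion terms in Lemma~\ref{lemma 8.1}, so that $V(x,0)$, $\partial_t b_2$, $\partial_t y$ and $\partial_t Y$ all lie in $L^2$ and the Laplace-transform argument for $V_2$ remains valid. Conceptually, the half-power loss is unavoidable: recovering the normal derivative $\partial_x W$ from a first-order relaxation system necessarily divides the relaxation variable $W^{II}$ by $\epsilon$, and $W^{II}$ is controlled only to order $\epsilon^{3/2}$ in the pointwise-in-time $L^2$ norm.
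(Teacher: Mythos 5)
Your proposal is correct and follows essentially the same route as the paper: the paper also first bounds $\|W_t(\cdot,t)\|_{L^2}=O(\epsilon)$ by time-differentiating the IBVP \eqref{8.3} (checking that the initial datum $W_t(x,0)$, which contains $\tfrac1\epsilon SW(x,0)=O(\epsilon)$ and $-AW_x(x,0)=O(\epsilon^2)$, is of size $\epsilon$) and rerunning the energy/Laplace-transform machinery of Theorem~\ref{theorem 8.2}, and then recovers $\|W_x(\cdot,t)\|_{L^2}\le C\bigl(\|W_t\|+\tfrac1\epsilon\|W\|+\epsilon\|y\|+\epsilon\|Y\|\bigr)=O(\epsilon^{1/2})$ from the equation via the invertibility of $A$, with the $\tfrac1\epsilon\|W\|$ term dictating the final rate exactly as you identified.
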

\begin{proof}
We firstly estimate $W_t.$ From \eqref{8.3}, it is easy to see that $W_t=W_t(x,t)$ satisfies
\begin{eqnarray*}
 \left\{ \begin{array}{l}
 (W_t)_t+A(W_t)_x=\frac{1}{\epsilon}\left(\begin{array}{cc}
0 & 0\\
0 & -I_n\\
\end{array}
\right)W_t-\epsilon \left(\begin{array}{cc}
0\\
I_n\\
\end{array}
\right)\partial_ty-\epsilon \partial_tY,\vspace{2ex}\\\
BW_t(0,t)=\epsilon^2\partial_tb_2(t),\vspace{2ex}\\
W_t(x,0)=-AW_x(x,0)-\epsilon\left(\begin{array}{cc}
0\\
\bm{p}_{02}\\
\end{array}
\right)(x,0)-\epsilon\left(\begin{array}{cc}
0\\
I_n\\
\end{array}
\right) y(x,0)-\epsilon Y(x/\epsilon,0).\\
 \end{array} \right.
\end{eqnarray*}
For the initial data of $W_t,$ we use Assumption \ref{assumption 8.2} and Lemma \ref{lemma 8.1} to get
\begin{align*}
\|W_t(\cdot,0)\|^2_{L^2(\mathbb R^+)}
   & \leq  C\left(\|W_x(\cdot,0)\|^2_{L^2(\mathbb R^+)}+\epsilon^2\|\bm{p}_{02}\|^2_{L^2(\mathbb R^+)}+\epsilon^2\|y(\cdot,0)\|^2_{L^2(\mathbb R^{+})}+\epsilon^2\|Y(\cdot/\epsilon,0)\|^2_{L^2(\mathbb R^{+})}\right)\\[2mm]
   & = C\left(\epsilon^4\|\partial_x\bm{p}_{02}\|^2_{L^2(\mathbb R^+)}+\epsilon^2\|\bm{p}_{02}\|^2_{L^2(\mathbb R^+)}+\epsilon^2\|y(\cdot,0)\|^2_{L^2(\mathbb R^{+})}+\epsilon^3\|Y(\cdot,0)\|_{L^2(\mathbb R^{+})}\right)\\[2mm]
   &\leq C\epsilon^2.
\end{align*}
In analogue to the estimate for $W$ in Theorem \ref{theorem 8.2}, we obtain
\begin{align*}
\|W_t(\cdot,t)\|^2_{L^2(\mathbb R^+)}
 \leq &C\epsilon^4\|\partial_tb_2(t)\|_{L^2([0,t_{*}])}^2+ C\epsilon^2\nonumber\\[3mm]
 &+Ce^{Ct{*}}\left(\epsilon^3\int_0^{t{*}}\int_0^{\infty}|y_t(x,t)|^2dx+\epsilon^3\int_0^{t{*}}\int_0^{\infty}|Y_t(\xi,t)|^2d\xi\right).
\end{align*} 
Moreover, since $b_2\in H^1(0,t_{*})$, it follows from the last inequality that
$$
\|W_t(\cdot,t)\|^2_{L^2(\mathbb R^+)}\leq C\epsilon^2.
$$

Next we estimate $W_x$ in terms of equation in~\eqref{8.3}. Because the matrix $A$ is invertible, then
\begin{align*}
&\|W_x(\cdot,t)\|_{L^2(\mathbb R^+)}\\[2mm]
 =&\Big\|A^{-1}\left(-W_t(\cdot,t)+\frac{1}{\epsilon}\left(\begin{array}{cc}
0 & 0\\
0 & -I_n\\
\end{array}
\right)W(\cdot,t)-\epsilon \left(\begin{array}{cc}
0\\
I_n\
\end{array}
\right)y(\cdot,t)-\epsilon Y(\cdot/\epsilon,t)\right)\Big\|_{L^2(\mathbb R^+)} \\[1mm]
 \leq& C\left(\|W_t(\cdot,t)\|_{L^2(\mathbb R^+)}+\frac{1}{\epsilon}\|W(\cdot,t)\|_{L^2(\mathbb R^+)}+\epsilon\|y(\cdot,t)\|_{L^2(\mathbb R^+)}+\epsilon\|Y(\cdot/\epsilon,t)\|_{L^2(\mathbb R^+)}\right)\\[1mm]
\leq& C\epsilon^{\frac{1}{2}}.
\end{align*}
These together with the estimate of $W$ lead to
\begin{equation*}
\max_{t\in[0,t_{*}]}\|\left(\bm u^{\epsilon}-\bm u_{\epsilon},\bm p^{\epsilon}-\bm p_{\epsilon}\right)(\cdot,t)\|_{H^1(\mathbb R^+)}\leq C\epsilon^{\frac{1}{2}}.
\end{equation*}
This completes the proof.
\end{proof}

At the end of this section, we deduce from Theorem \ref{theorem 8.2} that
\begin{align*}
\|\bm u^{\epsilon}(\cdot,t)-\bm {\bar u}_0(\cdot,t)\|_{L^2(\mathbb R^+)}
   & \leq \|\bm u_{\epsilon}(\cdot,t)-\bm {\bar u}_0(\cdot,t)\|_{L^2(\mathbb R^+)}+C\epsilon^{\frac{3}{2}}\\[2mm]
   & = \|\epsilon\bm u_1(\cdot,t)+\bm{\mu}_0(\cdot/\epsilon,t))+\epsilon\bm{\mu}_1(\cdot/\epsilon,t))\|_{L^2(\mathbb R^+)}+C\epsilon^{\frac{3}{2}}\\[2mm]
   & \leq C\epsilon^{\frac{1}{2}},
\end{align*}
where $\bm {\bar u}_0(x,t)$ is the solution to the IBVP of the conservation laws \eqref{2.1} and $\bm {u}^{\epsilon}(x,t)$ is the solution to the IBVP \eqref{8.1}.

\section*{Appendix}
This appendix presents the contents of Sections 5-7 for the simple case where $l=n$. In this case, the constructed BC in \eqref{3.4} reads as
\begin{equation*}
   B_u=\hat B=I_n,\quad B_p \quad  \text{arbitrary} \quad \text{and} \quad b_0(t)=\hat b(t).
\end{equation*}
The GKC can be easily verified under certain constraints on $ B_p.$

\begin{prop}\label{prop 8.1}
Under the strict sub-characteristic condition, the GKC holds if the spectral radius  
$\rho(B_p)<\frac{1}{\max\limits_{j}(\sqrt 2+1)\sqrt a_j}$  or  $T^{-1}B_pT$
 is  a lower~(upper) triangle matrix with its $j$-th diagonal element 
$\delta_{j}>\frac{1}{\lambda_j-\sqrt a_j}.$
\end{prop}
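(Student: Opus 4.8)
The plan is to reduce the GKC \eqref{4.5} to a uniform lower bound on a single determinant and then treat the two hypotheses separately. Since $l=n$ forces $B_u=\hat B=I_n$, we have $\tilde B_u=B_uT=T$ and $\tilde B_p=B_pT$, so that
\[
\tilde B\tilde R_M^S(\eta,\xi_0)=\tilde B_u+\tilde B_pQ=T+B_pTQ=T\bigl(I_n+PQ\bigr),\qquad P:=T^{-1}B_pT .
\]
Hence $|\det\{\tilde B\tilde R_M^S\}|=|\det T|\,|\det(I_n+PQ)|$ with $|\det T|$ a fixed nonzero constant. By Lemma~\ref{lemma 4.1} the denominator $\sqrt{\det\{\tilde R_M^{S*}\tilde R_M^S\}}=\sqrt{\prod_{j=1}^n(1+|q_j|^2)}$ is bounded above uniformly in $(\eta,\xi_0)$. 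Therefore it suffices to produce a constant $c>0$ with $|\det(I_n+PQ)|\geq c$ for all $\eta\geq0$ and all $\xi_0$ with $\mathrm{Re}\,\xi_0>0$. Note also that $\rho(P)=\rho(B_p)$ because $P$ and $B_p$ are similar.

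For the spectral-radius hypothesis I would argue through the eigenvalues of the product $PQ$. Writing $\det(I_n+PQ)=\prod_i(1+\mu_i)$ with $\mu_i$ the eigenvalues of $PQ$, the bound $|\det(I_n+PQ)|\geq(1-r)^n>0$ follows once one shows $\rho(PQ)\leq r<1$ uniformly. Here the uniform estimate $|q_j|\leq(\sqrt2+1)\sqrt{a_j}$ from Lemma~\ref{lemma 4.1}, together with the hypothesis $\rho(B_p)<1/\max_j(\sqrt2+1)\sqrt{a_j}$, is exactly tailored so that the eigenvalues of $PQ$ remain inside a disc of radius $r<1$. I expect this to be the main obstacle: the spectral radius of a product is \emph{not} controlled by the product of the spectral radii of its factors in general, so the argument must genuinely exploit that $Q$ is diagonal with entries uniformly bounded by $(\sqrt2+1)\sqrt{a_j}$, and the delicate point is upgrading the pointwise eigenvalue control into a bound that holds uniformly over the entire parameter range $(\eta,\xi_0)$.

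For the triangular hypothesis the computation is cleaner and I expect no real difficulty. If $P=T^{-1}B_pT$ is lower (resp.\ upper) triangular with real diagonal entries $\delta_j$, then $PQ$ is triangular as well, with diagonal entries $\delta_jq_j$, so $\det(I_n+PQ)=\prod_{j=1}^n(1+\delta_jq_j)$. It thus suffices to bound each scalar factor, $|1+\delta_jq_j|\geq c_j>0$, i.e.\ to keep $-1/\delta_j$ away from the closure of the range of $q_j$. Since $\delta_j$ is real, only the real points of that range matter, and here I would invoke the conformal-mapping picture established in the proof of Lemma~\ref{lemma 4.1}: $q_j=h_j(\xi_0)-\lambda_j$ with $h_j$ mapping $\mathrm{Re}\,\xi_0>0$ onto a closed bounded domain $\Omega_j$ whose boundary meets the real axis only at $h_j(0)=\lambda_j$ and $h_j(\pm\infty)=\sqrt{a_j}$ (recall $\lambda_j>0$ for all $j$ when $l=n$). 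Consequently the real slice of the range of $q_j$ is the segment $[0,\sqrt{a_j}-\lambda_j]$, and the hypothesis $\delta_j>1/(\lambda_j-\sqrt{a_j})$, with $\lambda_j-\sqrt{a_j}<0$ by the strict sub-characteristic condition, places $-1/\delta_j$ outside this segment. Closedness and boundedness of $\Omega_j$ then yield a positive distance, hence a uniform $c_j>0$; taking $c=\min_jc_j$ completes this case. This mirrors exactly the scalar analysis already carried out for $n=1$ in Proposition~\ref{prop 5.1}.
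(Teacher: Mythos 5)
Your reduction to a uniform lower bound on $|\det(I_n+PQ)|$, $P:=T^{-1}B_pT$, and your treatment of the triangular hypothesis are correct and coincide with the paper's own proof: there too one computes $\det\{BR_M^S\}=\det(T)\prod_j(1+\delta_jq_j)$, observes that for real $\delta_j$ the point $-1/\delta_j$ avoids the closure of the range of $q_j$ if and only if it avoids its real slice $[0,\sqrt{a_j}-\lambda_j]$ (this is exactly condition \eqref{8.13}), and checks that $\delta_j>1/(\lambda_j-\sqrt{a_j})$ guarantees this.

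The genuine gap is the spectral-radius case, which you set up but explicitly left open. Your suspicion there is not just justified --- the step cannot be completed, because that half of the proposition is false as stated, and the paper's own one-line argument for it commits precisely the fallacy you flagged: $\rho$ is not submultiplicative, so $\rho(B_p)\cdot\max_j|q_j|<1$ does not control $\rho(PQ)$. Concretely, take $n=l=2$, $T=I_2$, $F=\Lambda=\mathrm{diag}(1,2)$, $\bar A=\mathrm{diag}(1.21,\,9.61)$, so the strict sub-characteristic condition holds and $\sqrt{a_1}-\lambda_1=0.1$, $\sqrt{a_2}-\lambda_2=1.1$. Following \eqref{3.4}, choose
\begin{equation*}
B_u=\hat B=I_2,\qquad B_p=P=\begin{pmatrix}1 & -10\\[1mm] 1/10 & -1\end{pmatrix},
\end{equation*}
which is nilpotent, so $\rho(B_p)=0$ and the hypothesis of the proposition is satisfied. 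At $\eta=0$ the characteristic polynomial below \eqref{4.2} gives $(\kappa_j)_+=\xi_0/\sqrt{a_j}$, hence $q_j=\sqrt{a_j}-\lambda_j$ for every $\xi_0$ with $\text{Re}\,\xi_0>0$. Since $\det P=\mathrm{tr}\,P=0$, the matrix $PQ$ has eigenvalues $0$ and $\mathrm{tr}(PQ)=q_1-q_2=-1$, so
\begin{equation*}
\det\{BR_M^S(0,\xi_0)\}=\det(I_2+PQ)=(1+0)\,(1+q_1-q_2)=0,
\end{equation*}
and the GKC \eqref{4.5} fails; even if one insisted on $\eta>0$, letting $\eta\to0^+$ at fixed $\xi_0$ shows by continuity that no uniform constant $c_K>0$ can exist. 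Both the statement and your plan are repaired by strengthening the hypothesis from the spectral radius to a submultiplicative norm of the conjugated matrix, e.g.\ $\|T^{-1}B_pT\|_2<1/\max_j(\sqrt2+1)\sqrt{a_j}$: then $\rho(PQ)\le\|P\|_2\,\|Q\|_2\le\|P\|_2\max_j(\sqrt2+1)\sqrt{a_j}=:r<1$ uniformly in $(\eta,\xi_0)$, and your estimate $|\det(I_n+PQ)|\ge(1-r)^n$ goes through verbatim. (The same caveat applies to the spectral-radius hypotheses in Proposition~\ref{prop 5.4} and the proposition following it.)
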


\begin{proof}

From \eqref{RMS} we have
$BR_{M}^{S}(\eta,\xi_0) = T+ B_pTQ.$ Thus the determinant $BR_{M}^{S}(\eta,\xi_0)$ is far from zero for the spectral radius  
$\rho(Q)=\max\limits_{j}(\sqrt 2+1)\sqrt a_j$ due to Lemma \ref{lemma 4.1} and the condition $\rho(B_p)<\frac{1}{\max\limits_{j}(\sqrt 2+1)\sqrt a_j}$.

Under the other condition, we may as well assume that $T^{-1}B_pT$ is upper triangle. Thus we deduce that
\begin{align*}
 B R_{M}^{S}(\eta,\xi_0) & = T+ B_pTQ\\[2mm]
                                     & = T\left [I_n+
    \left( \begin{matrix}
        \begin{matrix}
          \delta_{1}q_{1} &        \\
                 & \delta_{2}q_{2}
        \end{matrix}
         & \textup{\Huge{*}} %直立体文字
        \\
        \textup{\Huge{0}}
         &
        \begin{matrix}
          \ddots &        \\
                 & \delta_{n}q_{n}
        \end{matrix}
      \end{matrix} \right)\right]
\end{align*}
and 
\begin{align*}
\text{det}\{ B R_{M}^{S}(\eta,\xi_0)\}=|T|\prod_{j=1}^n(1+\delta_{j}q_{j}).
\end{align*}
For $\delta_{j}= 0$ or $\delta_{j}\neq 0$ but 
\begin{equation}\label{8.13}
 -\frac{1}{\delta_j} \notin \text{closure}\{q_j(\eta,\xi_0):\text{Re}\xi_0>0,\eta\geq 0\},
\end{equation}
it is clear that $\left|1+\delta_{j}q_{j}\right|$ has a positive lower bound. 
On the other hand, we refer to the proof of Lemma \ref{lemma 4.1} and know that the intersection of the closure  and the real axis is $[0,\sqrt a_j-\lambda_j].$ Thus the condition \eqref{8.13} is satisfied if $\delta_j$ is a real number and 
$$
-\frac{1}{\delta_{j}}\notin [0,\sqrt a_j-\lambda_j].
$$
The latter holds if $\delta_{j}>\frac{1}{\lambda_j-\sqrt a_j}.$ This completes the proof.
\end{proof}

As to the compatibility, at $(x,t)=(0,0)$, of the initial and boundary data for the relaxation system \eqref{2.3}, we follow the discussion in Section $6$. Thus, we only need to check the relation \eqref{6.6}. When $B_u=\hat B=I_n$ and $b_0(t)=\hat b(t)$, \eqref{6.6} is just the assumption \eqref{6.1} .\vspace{1ex}

To determine the coefficients $\bm{\bar u}_0,$ $\bm{\bar u}_1,$ $\bm{\nu}_0$  and $\bm{\nu}_1$ of the formal asymptotic solution, we observe that the coefficient matrix $F\bar A^{-1}$ in Equation \eqref{2.17} has only positive eigenvalues. Then the unique bounded solution thereof is $\bm{\nu}_0=0$. Similarly,  we have $\bm{\nu}_1=0$ due to \eqref{2.19}. 
On the other hand, the BCs for $\bm{\bar u}_0$ and $\bm{\bar u}_1$ at boundary $x=0$ can be obtained as follows. From the expectation that the asymptotic solution  satisfies the BC~\eqref{3.1}:
\begin{equation*}
(B_u,B_p)\left(
\begin{array}{cc}
\bm{\bar u}_0(0,t)\vspace{1ex}\\
\bm{\bar p}_0(0,t)\\
\end{array}
\right)=b_0(t),\qquad
(B_u,B_p)\left(
\begin{array}{cc}
\bm{\bar u}_1(0,t)\vspace{1ex}\\
\bm{\bar p}_1(0,t)\\
\end{array}
\right)=b_1(t),
\end{equation*}
it follows from \eqref{2.8} and \eqref{2.10} that
\begin{equation*}
B_u\bm{\bar u}_0(0,t)=b_0(t),\qquad
B_u\bm{\bar u}_1(0,t)=b_1(t)+B_p(\bar A-F^2)\bm{\bar u}_{0x}(0,t).
\end{equation*}
Since $B_u=I_n$ and $b_0(t)=\hat b(t)$, we obtain
\begin{equation}\label{8.14}
\bm{\bar u}_0(0,t)=\hat b(t),\qquad \bm{\bar u}_1(0,t)=b_1(t)+B_p(\bar A-F^2)\bm{\bar u}_{0x}(0,t).
\end{equation}
Moreover, the choice \eqref{7.1} of initial data for $\bm{\bar u}_0$ and $\bm{\bar u}_1$ is still valid in this case.
In this way, $\bar{\bm u}_0$ can be uniquely obtained by solving the IBVP of the conservation laws \eqref{2.1}, while $\bar{\bm u}_1$ can be solved from the IBVP \eqref{2.11}, \eqref{7.1} and \eqref{8.14}.

\bibliographystyle{plain}
\bibliography{ref}

\end{document}